\newtheorem{theorem}{Theorem}
\newtheorem{corollary}[theorem]{Corollary}
\newenvironment{proof}[1][Proof]{\noindent\textbf{#1.} }{\ \rule{0.5em}{0.5em}}
\begin{document}

\title{Mutually inverse series relating Ferrers and associated Legendre
functions\ and generating functions pertaining to them}
\author{Pinchas Malits \\
(PERI, Physics and Engineering Research Institute\\
at Ruppin Academic Center, Emek Hefer 40250,\\
Israel)}
\maketitle

\textbf{Keywords: }Ferrers functions, associated Legendre functions,
generating functions, Gegenbauer polynomials, associated Legendre
polynomials, hypergeometric polynomials.

\section{\textbf{Introduction}}

Ferrers functions of the first kind $P_{\nu }^{-\mu }\left( x\right) $ are
solutions of the second order differential equation%
\begin{equation}
\frac{d}{dx}\left( \left( 1-x^{2}\right) \frac{dy}{dx}\right) +\left( \nu
\left( \nu +1\right) -\frac{\mu ^{2}}{\left( 1-x^{2}\right) }\right) y=0%
\text{, }\nu ,\mu \in \mathbb{C}\text{,}  \label{Leg-eq}
\end{equation}%
on the interval $-1<x<1$, and associated Legendre functions of the first
kind $P_{\nu }^{-\mu }\left( x\right) $ are solutions of the same equation
on the interval $x>1$ \cite{Erdelyi1,NIS}. They typically arise in numerous
problems of mathematical physics as a result of separation of variables in
various coordinate systems \cite{Miller1} while their polynomial cases also
play an important role in approximation theory and numerical analysis \cite%
{Atki,G}. In many cases, it is necessary to express or re-expand one of the
above-mentioned functions in terms of the other. Results of this article can
be helpful in solving such problems.

Functions $P_{\nu }^{-\mu }\left( x\right) $ are expressed by means of the
Gauss hypergeometric function $F\left( \alpha ,\beta ;\gamma ;t\right) $:
for $x>1$ in \cite[p.122, 3.2(3)]{Erdelyi1} and for $\left\vert x\right\vert
<1$ in \cite[p.143, 3.4(6)]{Erdelyi1}. For the sake of convenience, we write
both expressions in the form of the unified formula

\begin{equation}
P_{\nu }^{-\mu }\left( x\right) =P_{-1-\nu }^{-\mu }\left( x\right)
=\left\vert \frac{1-x}{1+x}\right\vert ^{\frac{\mu }{2}}\frac{F\left( -\nu
,\nu +1;1+\mu ;\frac{1-x}{2}\right) }{\Gamma \left( 1+\mu \right) }\text{, }%
x>-1\text{,}  \label{1}
\end{equation}%
or on using the Pfaff transformation for Gauss functions \cite[p.64, 2.1(22)]%
{Erdelyi1} 
\begin{equation}
P_{\nu }^{-\mu }\left( x\right) =P_{-1-\nu }^{-\mu }\left( x\right) =\frac{%
2^{-\nu }\left\vert 1-x\right\vert ^{\frac{\mu }{2}}}{\Gamma \left( 1+\mu
\right) \left( 1+x\right) ^{\frac{\mu }{2}-\nu }}F\left( -\nu ,\mu -\nu
;1+\mu ;\frac{x-1}{x+1}\right) \text{.}  \label{Hyper1}
\end{equation}%
It is readily seen that Ferrers and associated Legendre functions of the
first kind are entire functions in both parameters $\mu $ and $\nu $.

Note, that for $x>1$ the second linearly independent solution of (\ref%
{Leg-eq}), an associated Legendre functions of the second kind $Q_{\nu
}^{-\mu }\left( x\right) $, can be also expressed in the terms of associated
Legendre functions of the first kind of the argument belonging to $\left(
1,\infty \right) $ , namely \cite[p.141, 3.3 (13)]{Erdelyi1} 
\begin{equation}
Q_{\nu }^{-\mu }\left( x\right) =\sqrt{\frac{\pi }{2}}e^{-i\pi \mu }\frac{%
\Gamma \left( \nu -\mu +1\right) }{\left( x^{2}-1\right) ^{\frac{1}{4}}}%
P_{\mu -\frac{1}{2}}^{-\nu -\frac{1}{2}}\left( \frac{x}{\sqrt{x^{2}-1}}%
\right) \text{, }x>1\text{.}  \label{Q}
\end{equation}

For $\nu =k+\mu $ or $\nu =-k$, $k\in \mathbb{N}_{0}$, the Gauss
hypergeometric series in (\ref{Hyper1}) turns into Jacobi polynomials. In
particular, 
\begin{equation}
P_{k+\mu }^{-\mu }\left( x\right) =\frac{2^{\mu }k!\Gamma \left( \mu
+1/2\right) }{\sqrt{\pi }\Gamma \left( 2\mu +k+1\right) }\left\vert
1-x^{2}\right\vert ^{\frac{\mu }{2}}C_{k}^{\mu +1/2}\left( x\right) \text{, }%
x>-1\text{,}  \label{P-C}
\end{equation}%
where%
\begin{equation}
C_{k}^{\tau }\left( x\right) =\left( -1\right) ^{k}C_{k}^{\tau }\left(
-x\right) =\frac{1}{\Gamma \left( \tau \right) }\sum_{j=0}^{\left[ \frac{k}{2%
}\right] }\frac{\left( -1\right) ^{j}\Gamma \left( k-j+\tau \right) }{%
j!\left( k-2j\right) !}\left( 2x\right) ^{k-2j}\text{.}  \label{Ck}
\end{equation}%
are Gegenbauer polynomials (see also \cite{Erdelyi2,Szego}).

In the important partial case $\nu =k$, $\mu =\pm m$, $k,m\in \mathbb{N}_{0} 
$, we have the associated Legendre polynomials $P_{k}^{\pm m}\left( x\right) 
$ given as $k\geq m$ by

\begin{equation}
P_{k}^{m}\left( x\right) =\left( -1\right) ^{m}\frac{\left( k+m\right) !}{%
\left( k-m\right) !}P_{k}^{-m}\left( x\right) =\frac{\left( -1\right)
^{m}\left( 2m\right) !}{2^{m}m!}\left\vert 1-x^{2}\right\vert ^{\frac{m}{2}%
}C_{k-m}^{m+1/2}\left( x\right) \text{.}  \label{Pn}
\end{equation}%
As $k<m$, $P_{k}^{m}\left( x\right) =0$.

Certain special choices of parameters and transformation formulas for
hypergeometric functions \cite{Erdelyi1} allow us to obtain several
relations between Ferrers and associated Legendre functions. In particular,
by virtue of (\ref{Hyper1}) we obtain on setting $\mu =2\nu +1$, $x=\left(
1+t\right) /\left( 1-t\right) $\ into (\ref{1}) 
\begin{equation}
P_{\nu }^{-2\nu -1}\left( \frac{1+t}{1-t}\right) =\sqrt{1-t}P_{\nu }^{-2\nu
-1}\left( 1-2t\right) \text{, }0<t<1\text{,}
\end{equation}%
while setting $\ \nu =1/2$, $x=\left( 1+t\right) /\left( 2\sqrt{t}\right) $
into \cite[ 3.2(28)]{Erdelyi1} \ 
\begin{equation*}
P_{\nu }^{-\mu }\left( x\right) =\frac{2^{-\mu }\left( x^{2}-1\right) ^{%
\frac{\mu }{2}}}{\Gamma \left( 1+\mu \right) \left( x-\sqrt{x^{2}-1}\right)
^{\mu -\nu }}F\left( \mu -\nu ,\mu +\frac{1}{2};1+2\mu ;\frac{2\sqrt{x^{2}-1}%
}{\sqrt{x^{2}-1}-x}\right)
\end{equation*}%
and $\nu =\mu -1/2$, $x=1/\sqrt{1-t}$ into \cite[ 3.2(20)]{Erdelyi1}

\begin{equation*}
P_{\nu }^{-\mu }\left( x\right) =\frac{2^{-\mu }\left( x^{2}-1\right) ^{%
\frac{\mu }{2}}}{\Gamma \left( 1+\mu \right) }F\left( \frac{\mu +\nu +1}{2},%
\frac{\mu -\nu }{2};1+\mu ;1-x^{2}\right)
\end{equation*}%
yield%
\begin{equation}
P_{\frac{1}{2}}^{-\mu }\left( \frac{1+t}{2\sqrt{t}}\right) =2^{-\mu }P_{\mu -%
\frac{1}{2}}^{-2\mu }\left( 2t-1\right) ,0<t<1\text{,}
\end{equation}%
\begin{equation}
P_{-\frac{1}{2}}^{-\mu }\left( \frac{1}{\sqrt{1-t}}\right) =\sqrt[4]{1-t}P_{-%
\frac{1}{4}}^{-\mu }\left( 1-2t\right) ,0<t<1\text{.}
\end{equation}%
There are additional similar relations (some of them readers can find in 
\cite{Maier1,Maier2}) but such simple relations can not be expected for
arbitrary orders and indices. In sections 4 and section 5, we derive a
number of pairs of mutually inverse series relating Ferrers and associated
Legendre functions of arbitrary $\mu ,\nu \in \mathbb{C}$ which under
special choices of $\mu $ and $\nu $ turn into\ mutually inverse sums
relating Gegenbauer or associated Legendre polynomials of different
arguments or into certain connection formulas for Gegenbauer and associated
Legendre polynomials. Our derivations are based on integral representations
of $P_{\nu }^{-\mu }\left( x\right) $ that are given in section 2 and on
generating functions for certain polynomials that are discussed in section
3. In section 3, we also obtain high-order asymptotics for above-mentioned
polynomials. In particular, a uniform asymptotics for $C_{n}^{\lambda
-n}\left( x\right) $, $\lambda \in \mathbb{C}$, $n\rightarrow \infty $, is
established.

\section{Integral representations and asymptotics}

Our study will be based on integrals%
\begin{equation}
I_{1}^{-}\left( \alpha ,\mu ,\nu \right) =\int_{\alpha }^{\infty }\frac{%
e^{-t\mu }dt}{\left( \cosh t-\cosh \alpha \right) ^{-\nu }}\text{, }
\label{rep1}
\end{equation}%
\begin{equation}
I_{1}^{+}\left( \alpha ,\mu ,\nu \right) =\int_{\alpha }^{\infty }\frac{%
e^{-t\mu }dt}{\left( \sinh t-\sinh \alpha \right) ^{-\nu }}\text{, }
\label{rep2}
\end{equation}%
\begin{equation}
I_{2}^{-}\left( \alpha ,\mu ,\nu \right) =\int_{\alpha }^{\infty }\frac{%
e^{-2t\mu }\sinh ^{\nu }t}{\sinh ^{-\nu }\left( t-\alpha \right) }dt\text{, }
\label{rep3}
\end{equation}%
\begin{equation}
I_{2}^{+}\left( \alpha ,\mu ,\nu \right) =\int_{\alpha }^{\infty }\frac{%
e^{-2t\mu }\cosh ^{\nu }t}{\sinh ^{-\nu }\left( t-\alpha \right) }dt\text{, }
\label{rep4}
\end{equation}%
\begin{equation}
I_{3}^{+}\left( \alpha ,\sigma ,\nu \right) =\int_{\alpha }^{\infty }\frac{%
\sinh ^{\sigma }\left( t-\alpha \right) }{\cosh ^{\sigma +2\nu +2}t}dt\text{%
, }  \label{rep5}
\end{equation}%
\begin{equation}
I_{3}^{-}\left( \alpha ,\sigma ,\nu \right) =\int_{\alpha }^{\infty }\frac{%
\sinh ^{\sigma }\left( t-\alpha \right) }{\sinh ^{\sigma +2\nu +2}t}dt\text{,%
}  \label{rep6}
\end{equation}%
which for real $\alpha $ are convergent if\ $\func{Re}\mu >\func{Re}\nu $, $%
\func{Re}\nu >-1$, and $\func{Re}\sigma >-1$.

Show that the integrals $I_{1}^{+}\left( \alpha ,\mu ,\nu \right) $ for $%
\alpha \in \mathbb{R}$ give integral representations of Ferrers functions
and the integrals $I_{1}^{-}\left( \alpha ,\mu ,\nu \right) $ give for $%
\alpha >0$ integral representations of associated Legendre functions.

The first and second integrals can be readily evaluated by making the change 
$e^{-t}=e^{-\alpha }s$ and employing Euler's integral representation of the
Gauss hypergeometric function\ \cite{Erdelyi1}:%
\begin{eqnarray*}
I_{1}^{\pm }\left( \alpha ,\mu ,\nu \right) &=&\frac{e^{-\left( \mu -\nu
\right) \alpha }}{2^{\nu }}\int_{0}^{1}\frac{s^{\mu -\nu -1}\left(
1-s\right) ^{\nu }}{\left( 1\pm e^{-2\alpha }s\right) ^{-\nu }}ds \\
&=&\frac{\Gamma \left( 1+\nu \right) \Gamma \left( \mu -\nu \right) }{2^{\nu
}\Gamma \left( \mu +1\right) e^{\alpha \left( \mu -\nu \right) }}F\left(
-\nu ,\mu -\nu ;1+\mu ;\frac{\tanh ^{\pm 1}\alpha -1}{\tanh ^{\pm 1}\alpha +1%
}\right) \text{.}
\end{eqnarray*}%
Then, (\ref{Hyper1}) leads to%
\begin{equation}
I_{1}^{-}\left( \alpha ,\mu ,\nu \right) =\Gamma \left( 1+\nu \right) \Gamma
\left( \mu -\nu \right) \sinh ^{\nu }\alpha P_{\nu }^{-\mu }\left( \coth
\alpha \right) \text{,}  \label{rep11}
\end{equation}%
\begin{equation}
I_{1}^{+}\left( \alpha ,\mu ,\nu \right) =\Gamma \left( 1+\nu \right) \Gamma
\left( \mu -\nu \right) \cosh ^{\nu }\alpha P_{\nu }^{-\mu }\left( \tanh
\alpha \right) .  \label{rep33}
\end{equation}%
The integrals $I_{2}^{\pm }\left( \alpha ,\mu ,\nu \right) $ and $I_{3}^{\pm
}\left( \alpha ,\mu ,\nu \right) $ can be \ evaluated by means of the change 
$e^{-2t}=e^{-2\alpha }s$.%
\begin{eqnarray*}
I_{2}^{\pm }\left( \alpha ,\mu ,\nu \right) &=&\frac{e^{-\left( 2\mu -\nu
\right) \alpha }}{2^{2\nu +1}}\int_{0}^{1}\frac{s^{\mu -\nu -1}\left(
1-s\right) ^{\nu }}{\left( 1\pm e^{-2\alpha }s\right) ^{-\nu }}ds \\
&=&\frac{\Gamma \left( 1+\nu \right) \Gamma \left( \mu -\nu \right) }{%
2^{2\nu +1}\Gamma \left( \mu +1\right) e^{\alpha \left( 2\mu -\nu \right) }}%
F\left( -\nu ,\mu -\nu ;1+\mu ;\frac{\tanh ^{\pm 1}\alpha -1}{\tanh ^{\pm
1}\alpha +1}\right)
\end{eqnarray*}%
and%
\begin{eqnarray*}
I_{3}^{\pm }\left( \alpha ,\mu ,\nu \right) &=&2^{2\nu +1}e^{-\left( 2\nu
+1\right) \alpha }\int_{0}^{1}\frac{s^{\nu }\left( 1-s\right) ^{\sigma }}{%
\left( 1\pm e^{-2\alpha }s\right) ^{\sigma +2\nu +2}}ds \\
&=&\frac{2^{2\nu +1}\Gamma \left( 1+\nu \right) \Gamma \left( 1+\sigma
\right) }{\Gamma \left( \sigma +2\nu +2\right) e^{\alpha \left( 2\nu
+1\right) }}F\left( c-a,-a;c+1;\frac{\tanh ^{\pm 1}\alpha -1}{\tanh ^{\pm
1}\alpha +1}\right)
\end{eqnarray*}%
with $c=\sigma +2\nu +1$, $a=-\nu -1$.

Then, on invoking (\ref{Hyper1})%
\begin{equation}
I_{2}^{+}\left( \alpha ,\mu ,\nu \right) =2^{-1-\nu }\Gamma \left( 1+\nu
\right) \Gamma \left( \mu -\nu \right) e^{-\alpha \mu }\cosh ^{\nu }\alpha
P_{\nu }^{-\mu }\left( \tanh \alpha \right) \text{,}  \label{rep222}
\end{equation}%
\begin{equation}
I_{2}^{-}\left( \alpha ,\mu ,\nu \right) =2^{-1-\nu }\Gamma \left( 1+\nu
\right) \Gamma \left( \mu -\nu \right) e^{-\alpha \mu }\sinh ^{\nu }\alpha
P_{\nu }^{-\mu }\left( \coth \alpha \right) \text{,}  \label{rep444}
\end{equation}%
\begin{equation}
I_{3}^{+}\left( \alpha ,\sigma ,\nu \right) =\frac{2^{\nu }\Gamma \left(
\sigma +1\right) \Gamma \left( 1+\nu \right) }{\cosh ^{\nu +1}\alpha }P_{\nu
}^{-\sigma -\nu -1}\left( \tanh \alpha \right) \text{,}  \label{rep55}
\end{equation}%
\begin{equation}
I_{3}^{-}\left( \alpha ,\sigma ,\nu \right) =\frac{2^{\nu }\Gamma \left(
1+\sigma \right) \Gamma \left( 1+\nu \right) }{\sinh ^{\nu +1}\alpha }P_{\nu
}^{-\sigma -\nu -1}\left( \coth \alpha \right) \text{.}  \label{rep66}
\end{equation}

The integral (\ref{rep11}) by virtue of (\ref{Q}) is the well-known integral
representation of the associated Legendre functions of the second kind \ 
\cite[p.155, 3.7(4)]{Hobson,Erdelyi1} that usually is derived in a more
tedious way. The integral representations of Ferrrers functions (\ref{rep33}%
), (\ref{rep222}) and (\ref{rep55}) are given by the author in \cite{Malits}%
. The integral representations of associated Legendre functions (\ref{rep444}%
) and (\ref{rep66}) are not indicated in sources known to the author.

Asymptotic expansions of the integrals $I_{1}^{\pm }\left( \alpha ,\mu ,\nu
\right) $ as $\func{Re}\mu \rightarrow \infty $ and $\nu =\nu \left( \mu
\right) =O\left( 1\right) $ can be readily obtained by the change $t=y+a$ \
and making use of Watson's lemma for Laplace integrals \cite{Miller}. In
particular, we obtain the leading terms 
\begin{eqnarray}
\Gamma \left( \mu -\nu \right) P_{\nu }^{-\mu }\left( \coth \alpha \right)
&=&\frac{e^{-\alpha \mu }}{\mu ^{\nu +1}}\left( 1+O\left( \frac{1}{%
\left\vert \mu \right\vert }\right) \right) \text{, }\alpha >0\text{,}
\label{as11} \\
\Gamma \left( \mu -\nu \right) P_{\nu }^{-\mu }\left( \tanh \alpha \right)
&=&\frac{e^{-\alpha \mu }}{\mu ^{\nu +1}}\left( 1+O\left( \frac{1}{%
\left\vert \mu \right\vert }\right) \right) \text{, }\alpha >0\text{.}
\label{as12}
\end{eqnarray}%
It follows from (\ref{1}) and asymptotics of gamma functions $\Gamma \left(
\eta +\gamma \right) /\Gamma \left( \eta +\delta \right) =\eta ^{\gamma
-\delta }\left( 1+O\left( 1/\left\vert \eta \right\vert \right) \right) $
that (\ref{as11}) and (\ref{as12}) are valid for any fixed number $\nu $.

Asymptotic expansions of $P_{\nu }^{-\sigma -\nu }\left( \coth \alpha
\right) $ and $P_{\nu }^{-\sigma -\nu }\left( \tanh \alpha \right) $ as $%
\func{Re}\nu \rightarrow +\infty $, $\func{Im}\nu =O\left( 1\right) $, and $%
\sigma =\sigma \left( \nu \right) =O\left( 1\right) $ follow from the
representation (see \cite[pp.128-129, 3.2(24)]{Erdelyi1} and \cite{Cohl1}) 
\begin{equation*}
P_{\nu }^{-\mu }\left( x\right) =\frac{2^{-\mu }\left\vert
1-x^{2}\right\vert ^{\frac{\mu }{2}}x^{\nu -\mu }}{\Gamma \left( 1+\mu
\right) }F\left( \frac{\mu -\nu }{2},\frac{\mu -\nu +1}{2};1+\mu ;1-\frac{1}{%
x^{2}}\right) \text{, }x>0\text{,}
\end{equation*}%
which leads to

\begin{eqnarray}
P_{\nu }^{-\sigma -\nu }\left( \coth \alpha \right) &=&\frac{\sinh ^{-\nu
}\alpha \cosh ^{-\sigma }\alpha }{2^{\sigma +\nu }\Gamma \left( \sigma +\nu
+1\right) }\left( 1+O\left( \frac{1}{\nu }\right) \right) \text{, }\alpha >0%
\text{,}  \label{as21} \\
P_{\nu }^{-\sigma -\nu }\left( \tanh \alpha \right) &=&\frac{\cosh ^{-\nu
}\alpha \sinh ^{-\sigma }\alpha }{2^{\sigma +\nu }\Gamma \left( \sigma +\nu
+1\right) }\left( 1+O\left( \frac{1}{\nu }\right) \right) \text{, }\alpha >0%
\text{.}  \label{as22}
\end{eqnarray}

\section{Some generating functions and large order asymptotics for related
functions}

The key role in derivations of this article are playing two families of
generating functions for certain functions expressed in terms of
hypergeometric or generalized hypergeometric polynomials.

The first family consists of partial cases of the generating function for
the Lauricella hypergeometric polynomials (see \cite[p.189, (6.2.1)]{Exton}
and \cite[p.383, 7.6(27)]{Sriv})%
\begin{eqnarray}
&&\sum_{n=0}^{\infty }\frac{\left( 1+\alpha +\beta n\right) _{n}}{n!}%
F_{D}^{\left( N\right) }\left( -n,\tau _{1},...,\tau _{N};1+\alpha +\beta
n;x_{1},...,x_{N}\right) t^{n}  \notag \\
&=&\frac{\left( 1+w\right) ^{\alpha +1}}{1-\beta w}\prod\limits_{j=1}^{N}%
\left( 1+x_{j}w\right) ^{-\tau _{j}}\text{,}  \label{L0}
\end{eqnarray}%
where $\alpha ,\beta ,t\in \mathbb{C}$, $w=t\left( 1+w\right) ^{\beta +1}$, $%
\tau _{n},x_{n}\in \mathbb{C}$ for $n\in 1,...,N$, the Lauricella
hypergeometric polynomials are defined as a sum%
\begin{equation*}
F_{D}^{\left( N\right) }\left( -n,\tau _{1},...,\tau _{N};\rho
;s_{1},...,s_{N}\right) =\sum_{m_{1},...,m_{N}=0}^{m_{1}+...+m_{N}\leq n}%
\frac{\left( -n\right) _{m_{1}+...+m_{N}}}{\left( \rho \right)
_{m_{1}+...+m_{N}}}\prod\limits_{j=1}^{N}\frac{\left( \tau _{j}\right)
_{m_{j}}s_{j}^{m_{j}}}{m_{j}!}\text{,}
\end{equation*}%
and $\left( a\right) _{m}$ is the Pochhammer symbol \cite{Erdelyi1} 
\begin{equation*}
\left( a\right) _{m}=\frac{\Gamma \left( a+m\right) }{\Gamma \left( a\right) 
}=\left\{ 
\begin{array}{c}
1\text{, }m=0 \\ 
\prod\limits_{l=1}^{m}\left( a+l\right) \text{, }m>0%
\end{array}%
\right. \text{.}
\end{equation*}

On setting $t=-w_{0}z$, $x_{n}w_{0}=w_{n}$, we obtain by taking $\beta =-1$, 
$\alpha =-\tau _{0}$ and $\beta =0$, $\alpha +1=-\sum_{j=0}^{N}\tau _{j}$,

\begin{eqnarray}
\prod\limits_{j=0}^{N}u_{j}^{-\tau _{j}}\left( z\right)
&=&\sum_{n=0}^{\infty }G_{n}^{\left( N\right) }\left( \tau _{j},w_{j}\right)
z^{n}\text{, }  \label{Laur1} \\
u_{j}\left( z\right) &=&1-w_{j}z\text{, }z,w_{j},\tau _{j}\in \mathbb{C}%
\text{, }w_{j}\neq w_{i}\text{,}  \notag
\end{eqnarray}%
where functions $G_{n}^{\left( N\right) }\left( \tau _{j},w_{j}\right) $ are
expressed in terms of\ the Lauricella hypergeometric polynomials: 
\begin{eqnarray*}
G_{n}^{\left( N\right) }\left( \tau _{j},w_{j}\right) &=&\frac{\left( \tau
_{0}\right) _{n}w_{0}^{n}}{n!}F_{D}^{\left( N\right) }\left( -n,\tau
_{1},...,\tau _{N};1-\tau _{0}-n;\frac{w_{1}}{w_{0}},...,\frac{w_{N}}{w_{0}}%
\right) \\
&=&\frac{\left( \sum_{j=0}^{N}\tau _{j}\right) _{n}}{w_{0}^{-n}n!}%
F_{D}^{\left( N\right) }\left( -n,\tau _{1},...,\tau
_{N};-\sum_{j=0}^{N}\tau _{j};1-\frac{w_{1}}{w_{0}},...,1-\frac{w_{N}}{w_{0}}%
\right) \text{,}
\end{eqnarray*}%
and therefore are polynomials of the degree $2N$ in the variables $\tau _{j} 
$ and $w_{j}$.

Note that the power\ series in (\ref{Laur1}) is convergent if for certain $k$%
, $\max \left( \left\vert zw_{j}\right\vert \right) <1$ as $j\leq k$ while
as $j>k$: 1) $zw_{j}=1$ and $\tau _{j}<0$; or 2) $\left\vert
zw_{j}\right\vert =1$, $\arg zw_{j}\neq 0$ and $\tau _{j}<-1$.

Simplest properties of the polynomials $G_{n}^{\left( N\right) }\left( \tau
_{j},w_{j}\right) $ follow from (\ref{Laur1}) by multiplication of power
series:%
\begin{eqnarray*}
G_{n}^{\left( N\right) }\left( \tau _{0j}+\tau _{1j},w_{j}\right)
&=&\sum_{m=0}^{n}G_{m}^{\left( N\right) }\left( \tau _{0j},w_{j}\right)
G_{n-m}^{\left( N\right) }\left( \tau _{1j},w_{j}\right) \text{,} \\
G_{n}^{\left( N\right) }\left( \tau _{j},w_{j}\right)
&=&\sum_{m=0}^{n}G_{m}^{\left( M\right) }\left( \tau _{j},w_{j}\right)
G_{n-m}^{\left( N-M-1\right) }\left( \widetilde{\tau }_{j},\widetilde{w}%
_{j}\right) \text{,} \\
\widetilde{\tau }_{j} &=&\tau _{j+M+1}\text{, \ }\widetilde{w}_{j}=w_{j+M+1}%
\text{. }
\end{eqnarray*}

Note, that by denoting 
\begin{equation*}
w_{j}^{\left( m\right) }=\left\{ 
\begin{array}{c}
\bigskip \frac{w_{j}}{w_{j}-w_{m}}\text{ if }j<m \\ 
\frac{w_{j}}{w_{j}-w_{m}}\text{ if }j\geq m%
\end{array}%
\right. \text{, \ }\tau _{j}^{\left( m\right) }=\left\{ 
\begin{array}{c}
\bigskip \tau _{j}\text{ if }j<m \\ 
\tau _{j+1}\text{ if }j\geq m%
\end{array}%
\right. \text{,}
\end{equation*}%
the left side of (\ref{Laur1}) can be rewritten in the form%
\begin{eqnarray}
\prod\limits_{j=0}^{N}u_{j}^{-\tau _{j}}\left( z\right) &=&A_{m}\left(
w_{j}\right) u_{m}^{-\tau _{m}}\left( z\right) \phi _{m}\left( u_{m}\left(
z\right) \right) \text{,}  \label{new} \\
A_{m}\left( w_{j}\right) &=&\prod\limits_{\substack{ j=0  \\ j\neq m}}%
^{N}\left( 1-\frac{w_{j}}{w_{m}}\right) ^{-\tau _{j}}\text{, }  \notag \\
\text{ }\phi _{m}\left( u_{m}\right) &=&\prod\limits_{\substack{ k=0  \\ %
k\neq m}}^{N}\left( 1-\frac{w_{k}}{w_{k}-w_{m}}u_{m}\right) ^{-\tau
_{k}}=\prod\limits_{j=0}^{N-1}\left( 1-w_{j}^{\left( m\right) }u_{m}\right)
^{-\tau _{j}^{\left( m\right) }}\text{, }  \notag
\end{eqnarray}%
in which the function $\phi _{m}\left( u_{m}\right) $ can be expanded into
the Maclaurin series in powers of $u_{m}$. By virtue of (\ref{Laur1})

\begin{equation}
\phi _{m}\left( u_{m}\right) =\sum_{r=0}^{\infty }c_{r}^{\left( m\right)
}u_{m}^{r}\text{, \ \ }c_{r}^{\left( m\right) }=G_{r}^{\left( N-1\right)
}\left( \tau _{j}^{\left( m\right) },w_{j}^{\left( m\right) }\right) \text{.}
\label{Mac}
\end{equation}

The asymptotic expansion for $G_{n}^{\left( N\right) }\left( \tau
_{j},w_{j}\right) $ as $n\rightarrow \infty $ can be readily obtained by
Darboux's method \cite{Szego} by using (\ref{new}) and (\ref{Mac}).

\begin{theorem}
Let $\pm \tau _{j}\notin \mathbb{N}_{0}$ if $0\leq j\leq N_{1}$, $\tau
_{j}=k_{j}\in \mathbb{N}$ if $N_{1}+1\leq j\leq N_{2}$ and $-\tau _{j}\in 
\mathbb{N}$ if $N_{2}+1\leq j\leq N$. As $w_{j}\neq w_{i}$ and $n\rightarrow
\infty $,%
\begin{eqnarray}
G_{n}^{\left( N\right) }\left( \tau _{j},w_{j}\right)
&=&\sum_{m=0}^{N_{1}}\sum_{r=0}^{K_{m}}A_{m}\left( w_{j}\right)
c_{r}^{\left( m\right) }\frac{\left( \tau _{m}-r\right) _{n}}{n!}\left(
w_{m}^{n}+O\left( \frac{w_{m}^{n}}{n}\right) \right)  \notag \\
&&+\sum_{m=N_{1}+1}^{N_{2}}\sum_{r=0}^{k_{m}-1}A_{m}\left( w_{j}\right)
w_{m}^{n}\frac{\left( n+r\right) !c_{k_{m}-r-1}^{\left( m\right) }}{r!n!}%
\text{,}  \label{Coef1}
\end{eqnarray}%
where $K_{m}$ are arbitrary fixed integers.
\end{theorem}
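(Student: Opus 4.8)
The plan is to read off the large-$n$ behaviour of $G_n^{(N)}(\tau_j,w_j)$ from the singularities of the generating function $f(z)=\prod_{j=0}^N u_j^{-\tau_j}(z)=\prod_{j=0}^N(1-w_jz)^{-\tau_j}$ by Darboux's method, as signalled just before the statement. Since the $w_j$ are distinct, the only candidate singularities of $f$ are the points $z=1/w_m$, and their nature is dictated by $\tau_m$: for $0\le m\le N_1$ (non-integer $\tau_m$) the point $1/w_m$ is an algebraic branch point; for $N_1+1\le m\le N_2$ (where $\tau_m=k_m\in\mathbb{N}$) it is a pole of order $k_m$; and for $N_2+1\le m\le N$ (where $-\tau_m\in\mathbb{N}$) the factor $(1-w_mz)^{-\tau_m}$ is a polynomial, so $f$ is regular there and these indices contribute nothing. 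I would record this trichotomy first, reducing the problem to the singularities with $0\le m\le N_2$.

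Next I would write down the local singular expansion at each $1/w_m$. Formula (\ref{new}) already factors $f(z)=A_m(w_j)\,u_m^{-\tau_m}(z)\,\phi_m(u_m(z))$ with $\phi_m$ analytic at $u_m=0$, and (\ref{Mac}) supplies its Maclaurin coefficients $c_r^{(m)}=G_r^{(N-1)}(\tau_j^{(m)},w_j^{(m)})$, so that near $z=1/w_m$
\[
f(z)=A_m(w_j)\sum_{r=0}^{\infty}c_r^{(m)}\left(1-w_mz\right)^{r-\tau_m}.
\]
The coefficient of $z^n$ in $\left(1-w_mz\right)^{r-\tau_m}$ equals $\dfrac{(\tau_m-r)_n}{n!}\,w_m^n$ by the generalized binomial theorem, which turns each term of the local expansion into an explicit contribution to $G_n^{(N)}$. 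For a pole ($\tau_m=k_m$) the exponents $r-k_m$ are non-negative integers once $r\ge k_m$, so those terms are polynomials in $z$ and drop out; only the finite principal part $0\le r\le k_m-1$ survives, and re-indexing by $s=k_m-r-1$ reproduces exactly the contribution $A_m(w_j)\,w_m^n\,\dfrac{(n+s)!\,c_{k_m-s-1}^{(m)}}{s!\,n!}$ of the second sum in (\ref{Coef1}).

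I would then assemble the comparison function $g(z)$ as the sum over $0\le m\le N_2$ of these singular parts, retaining $K_m+1$ terms at each branch point and the full principal part at each pole, and split $f=g+(f-g)$. By construction $f-g$ has all poles removed and, at each branch point, only the weakened singularity $\left(1-w_mz\right)^{K_m+1-\tau_m}$; Darboux's theorem \cite{Szego} then bounds the coefficient of $z^n$ in $f-g$ by the contribution of these residual singularities, which is smaller by a relative factor $O(1/n)$ than the last retained branch-point term. Combining the exact coefficients of $g$ (via the binomial identity) with this bound yields (\ref{Coef1}), the factor $O(w_m^n/n)$ reflecting the Darboux remainder attached to each branch point.

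The step I expect to be most delicate is the error control when the moduli $|w_m|$ are unequal, so that the singularities lie on several concentric circles rather than a single one. Darboux's method in its basic form governs the coefficients through the singularities on the circle of convergence alone, whereas (\ref{Coef1}) records contributions $w_m^n\cdot(\text{power of }n)$ from every singularity $0\le m\le N_2$, including inner ones whose growth $|w_m|^n$ is exponentially subdominant. Making these subdominant terms meaningful forces one to take the truncation orders $K_m$ at the outer branch points large enough that their remainders fall below the scale of the inner contributions, which is precisely why the $K_m$ are left arbitrary. Arranging this hierarchy, checking that the comparison function simultaneously matches the singular behaviour at all the $1/w_m$, and confirming that $A_m(w_j)$ and $\phi_m$ are well defined under the standing hypothesis $w_j\neq w_i$, make up the main bookkeeping obstacle.
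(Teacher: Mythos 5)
Your proposal follows exactly the same route as the paper, whose entire proof of this theorem is the single sentence preceding the statement: apply Darboux's method using the factorization (\ref{new}) and the Maclaurin expansion (\ref{Mac}). Your writeup correctly supplies the details the paper omits (the trichotomy of singularities at $z=1/w_{m}$, the binomial extraction of coefficients giving $\left( \tau _{m}-r\right) _{n}w_{m}^{n}/n!$ and the exact finite principal parts at the poles, and the comparison-function bookkeeping), and your closing remark about singularities lying on several concentric circles is a legitimate observation about what the paper's one-line argument glosses over.
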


\begin{corollary}
Let $m$ be such indices that $-\tau _{m}\notin \mathbb{N}$. Denote$\ W=\max
\left( \left\vert w_{m}\right\vert \right) $ and for such indices $m$ that $%
\left\vert w_{m}\right\vert =W$ denote $T=\max \left( \func{Re}\tau
_{m}\right) $. Then, the leading term of the large order asymptotic
expansion for the polynomial $G_{n}^{\left( N\right) }\left( \tau
_{j},w_{j}\right) $ is 
\begin{equation}
G_{n}^{\left( N\right) }\left( \tau _{j},w_{j}\right) =\sum_{\func{Re}\tau
_{m}=T,\text{ }\left\vert w_{m}\right\vert =W\text{ }}\frac{A_{m}\left(
w_{j}\right) w_{m}^{n}}{n^{1-\tau _{m}}\Gamma \left( \tau _{m}\right) }
\label{L-as}
\end{equation}%
with an error whose modulus is of order $o\left( W^{n}n^{T-1}\right) $.
\end{corollary}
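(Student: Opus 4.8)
The plan is to read off the leading term directly from the full expansion (\ref{Coef1}) established in the preceding theorem, extracting the asymptotically largest summand and collecting all contributions of the same order. First I would classify the relevant indices. The hypothesis $-\tau_m\notin\mathbb{N}$ singles out precisely the indices $0\le m\le N_2$: those with $\pm\tau_m\notin\mathbb{N}_0$ (the first sum in (\ref{Coef1})) and those with $\tau_m=k_m\in\mathbb{N}$ (the second sum); the indices $N_2<m\le N$, for which $-\tau_m\in\mathbb{N}$, do not appear in (\ref{Coef1}) at all. So it suffices to analyze the two sums separately and compare their blocks.

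For the first sum I would apply the standard gamma-ratio asymptotics $\Gamma(n+a)/\Gamma(n+b)=n^{a-b}(1+O(1/n))$ to write
\[
\frac{(\tau_m-r)_n}{n!}=\frac{\Gamma(n+\tau_m-r)}{\Gamma(\tau_m-r)\,\Gamma(n+1)}=\frac{n^{\tau_m-r-1}}{\Gamma(\tau_m-r)}\Bigl(1+O(1/n)\Bigr),
\]
which is legitimate since $\tau_m$ non-integer keeps $\Gamma(\tau_m-r)$ finite and nonzero. Thus within a fixed block $m$ the summand drops by a factor $n^{-1}$ as $r$ increases, so the $r=0$ term dominates; since $c_0^{(m)}=G_0^{(N-1)}=1$ by (\ref{Laur1})--(\ref{Mac}), its leading part is exactly $A_m(w_j)w_m^n/(n^{1-\tau_m}\Gamma(\tau_m))$. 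For the second sum I would use $(n+r)!/(r!\,n!)=n^r/r!\,(1+O(1/n))$, so there the term $r=k_m-1$ dominates, and because $c_0^{(m)}=1$ and $\Gamma(k_m)=(k_m-1)!$ it contributes $A_m(w_j)w_m^n n^{k_m-1}/(k_m-1)!=A_m(w_j)w_m^n/(n^{1-\tau_m}\Gamma(\tau_m))$ as well. The key observation is that the two cases collapse to one and the same closed form $A_m(w_j)w_m^n/(n^{1-\tau_m}\Gamma(\tau_m))$, whose modulus is of order $|A_m(w_j)|\,W^n n^{\operatorname{Re}\tau_m-1}$ when $|w_m|=W$.

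Finally I would compare magnitudes across blocks. Each block contributes at the order $|w_m|^n n^{\operatorname{Re}\tau_m-1}$, so the exponential rate $|w_m|$ must be maximized first, giving the radius $W$, and among those the polynomial rate $\operatorname{Re}\tau_m$, giving $T$. Retaining only the blocks with $|w_m|=W$ and $\operatorname{Re}\tau_m=T$ yields the asserted sum (\ref{L-as}); every discarded contribution is either exponentially smaller (when $|w_m|<W$), smaller by a positive power of $n$ (when $|w_m|=W$ but $\operatorname{Re}\tau_m<T$, or $r\ge1$ in the first sum, or $r<k_m-1$ in the second), or a relative $O(1/n)$ correction to a retained term. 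All of these are $o(W^n n^{T-1})$, which is the claimed error. The one point requiring care, and the crux of the argument, is the verification that the non-integer block ($r=0$) and the positive-integer block ($r=k_m-1$) produce identical leading coefficients, so that the final answer can be written uniformly over all $m$ with $-\tau_m\notin\mathbb{N}$ rather than split into two separate formulas.
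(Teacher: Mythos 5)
Your proposal is correct and follows exactly the route the paper intends: the corollary is stated as an immediate consequence of the expansion (\ref{Coef1}), and you extract the dominant term of each block via the gamma-ratio asymptotics, verify that the non-integer case ($r=0$) and the positive-integer case ($r=k_m-1$) give the same closed form $A_m(w_j)w_m^n/(n^{1-\tau_m}\Gamma(\tau_m))$, and then keep only the blocks with $|w_m|=W$ and $\operatorname{Re}\tau_m=T$. All discarded contributions are indeed $o(W^n n^{T-1})$, so nothing is missing.
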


Below, we elaborate on several particular cases of (\ref{Laur1}) and related
polynomials.

For $N=1$ (\ref{Laur1}) turns into the well-known generation function for
the Gauss hypergeometric polynomials\ \cite{Erdelyi1}

\begin{equation}
\left( 1-z\right) ^{\tau -\rho }\left( 1-\left( 1-s\right) z\right) ^{-\tau
}=\sum_{n=0}^{\infty }g_{n}\left( \tau ,\rho ,s\right) z^{n}\text{,}
\label{gen1}
\end{equation}%
where $z,s,\tau ,\rho \in \mathbb{C}$, $\left\vert z\right\vert <1$, $%
\left\vert z\left( 1-s\right) \right\vert <1$, and%
\begin{equation*}
g_{n}\left( \tau ,\rho ,s\right) =\frac{\left( \rho -\tau \right) _{n}}{n!}%
F\left( -n,\tau ;1+\tau -\rho -n;1-s\right) =\frac{\left( \rho \right) _{n}}{%
n!}F\left( -n,\tau ;\rho ;s\right)
\end{equation*}%
are three-variable polynomials of degree $2n$ in the variables $\tau $, $%
\rho $ and $s$ whose asymptotics as $s\neq 0$ and $n\rightarrow \infty $
follows from (\ref{L-as}): if $\left\vert 1-s\right\vert <1\ $or $\left\vert
1-s\right\vert =1$, $\func{Re}\rho >2\func{Re}\tau $,%
\begin{equation*}
g_{n}\left( \tau ,\rho ,s\right) \sim \frac{s^{-\tau }}{\Gamma \left( \rho
-\tau \right) n^{1+\tau -\rho }}\text{ ;}
\end{equation*}%
if $\left\vert 1-s\right\vert =1$, $\func{Re}\rho =2\func{Re}\tau $,%
\begin{equation*}
g_{n}\left( \tau ,\rho ,s\right) \sim \frac{s^{-\tau }}{\Gamma \left( \rho
-\tau \right) n^{1+\tau -\rho }}+\frac{\left( -s\right) ^{\tau -\rho }\left(
1-s\right) ^{n+\rho -\tau }}{\Gamma \left( \tau \right) n^{1-\tau }}\text{ ;}
\end{equation*}%
if $\left\vert 1-s\right\vert >1$ or $\left\vert 1-s\right\vert =1$, $\func{%
Re}\rho <2\func{Re}\tau $,%
\begin{equation*}
g_{n}\left( \tau ,\rho ,s\right) \sim \frac{\left( -s\right) ^{\tau -\rho
}\left( 1-s\right) ^{n+\rho -\tau }}{\Gamma \left( \tau \right) n^{1-\tau }}%
\text{.}
\end{equation*}%
As $\rho =0$ and $n\geq 1$, one also can obtain

\begin{equation*}
g_{n}\left( \tau ,0,s\right) =-s\tau F\left( 1-n,1+\tau ;2;s\right) \text{.}
\end{equation*}

When $z=-u$ and $s=2$, (\ref{gen1}) becomes%
\begin{equation}
\left( 1+u\right) ^{\tau -\rho }\left( 1-u\right) ^{-\tau
}=\sum_{n=0}^{\infty }g_{n}\left( \tau ,-\rho \right) u^{n}\text{,}
\label{gen2}
\end{equation}%
where $u,\tau ,\rho \in \mathbb{C}$, $\left\vert u\right\vert <1$, and 
\begin{equation*}
g_{n}\left( \tau ,r\right) =\left( -1\right) ^{n}g_{n}\left( \tau
,-r,2\right)
\end{equation*}%
are the polynomials studied by Bateman \cite{Bateman}.

In the case $\rho =0$, $s=2$ and$\ $ $\tau =-\sigma $, (\ref{gen1}) turns
into the generating function for Mittag-Leffler polynomials%
\begin{equation}
\left( \frac{1+z}{1-z}\right) ^{\sigma }=\sum_{n=0}^{\infty }g_{n}\left(
\sigma \right) z^{n}\text{, \ }\left\vert z\right\vert <1\text{,}
\label{gen3}
\end{equation}%
\begin{equation}
\text{\ }g_{0}\left( \sigma \right) =1\text{, }g_{n}=2\sigma F\left(
1-n,1-\sigma ;2;2\right) \text{ as }n\geq 1\text{,}  \notag
\end{equation}%
\begin{equation*}
g_{n}\left( \sigma \right) \sim \frac{2^{-\sigma \mathsf{\mathtt{\mathtt{sign%
}}}\left( \func{Re}\sigma \right) }\left( \mathsf{\mathtt{\mathtt{sign}}}%
\left( -\func{Re}\sigma \right) \right) ^{n}}{n^{1-\sigma \mathsf{\mathtt{%
\mathtt{sign}}}\left( \func{Re}\sigma \right) }\Gamma \left( \sigma \mathtt{%
sign}\left( \func{Re}\sigma \right) \right) }as\ n\rightarrow \infty \text{,}
\end{equation*}%
which were introduced by Mittag-Leffler in his study of linear differential
equations \cite{Mittag,Bateman}.

In the case $\rho =2\tau $, according to (\ref{Hyper1}), 
\begin{eqnarray*}
g_{n}\left( \tau ,2\tau ,s\right) &=&\frac{\left( \tau \right) _{n}}{n!}%
F\left( -n,\tau ;1-\tau -n;1-s\right) \\
&=&\frac{\left( \tau \right) _{n}\Gamma \left( 1-\tau -n\right) \left\vert
\zeta -1\right\vert ^{\frac{\tau +n}{2}}}{2^{\tau }n!\left( \zeta +1\right)
^{\frac{n-\tau }{2}}}P_{n+\left( -\tau -n\right) }^{\tau +n}\left( \zeta
\right) \text{, }\zeta =\frac{2-s}{s}\text{,}
\end{eqnarray*}%
and then by virtue of (\ref{P-C}) we obtain by manipulations with gamma
functions 
\begin{equation*}
g_{n}\left( \tau ,2\tau ,s\right) =\frac{2^{-2n}\left( 2\tau \right) _{n}}{%
\left( \frac{1}{2}+\tau \right) _{n}}s^{n}C_{n}^{1/2-\tau -n}\left( \frac{s-2%
}{s}\right) \text{.}
\end{equation*}%
Hence, on making changes $z=e^{-u-\alpha }$ and $s=2e^{\alpha }\cosh \alpha $
the generation function (\ref{gen1}) turns into%
\begin{equation}
\frac{1}{\left( \sinh u+\sinh \alpha \right) ^{\tau }}=\sum_{n=0}^{\infty }%
\mathfrak{C}_{n}\left( \alpha ,\tau \right) e^{-\left( n+\tau \right) u}%
\text{,}  \label{sh-C}
\end{equation}
\begin{equation}
\mathfrak{C}_{n}\left( \alpha ,\tau \right) =\frac{2^{\tau -n}\left( 2\tau
\right) _{n}}{\left( \frac{1}{2}+\tau \right) _{n}}\cosh ^{n}\alpha
C_{n}^{1/2-\tau -n}\left( \tanh \alpha \right) \text{,}  \label{CC}
\end{equation}%
where $2^{-\tau }\mathfrak{C}_{n}\left( \alpha ,\tau \right) $ is a
polynomial in the parameter $\tau $.

Below, by using the differential equation for $C_{n}^{1/2-\tau -n}\left(
\tanh \alpha \right) $, we obtain a uniform asymptotic expansion as $n$ is
large.

\begin{theorem}
If $\alpha \in \left[ 0,\infty \right] $, $\tau \in \mathbb{C}$,\ and$\
n\rightarrow \infty $, then there is the uniform asymptotic formula 
\begin{equation}
C_{n}^{1/2-\tau -n}\left( \tanh \alpha \right) =\frac{\Omega _{n}\left( \tau
\right) e^{\left( n+\tau \right) \alpha }}{\cosh ^{n+\tau }\alpha }\left(
D_{n}^{+}+D_{n}^{-}e^{-2\left( n+\tau \right) \alpha }+\frac{\tau \left(
1-\tau \right) }{n+\tau }R_{n}\left( \alpha ,\tau \right) \right) \text{,}
\label{C}
\end{equation}%
where%
\begin{equation*}
\Omega _{n}\left( \tau \right) =\frac{\left( -1\right) ^{n}\Gamma \left( 
\frac{1}{2}+\tau +n\right) }{\Gamma \left( \left[ \frac{n}{2}\right]
+1\right) \Gamma \left( \frac{1}{2}+\tau +\left[ \frac{n}{2}\right] \right) }%
\text{, }
\end{equation*}%
\begin{equation*}
D_{n}^{\pm }=\frac{1+\left( -1\right) ^{n}}{4}\pm \frac{1-\left( -1\right)
^{n}}{2n+2\tau }\text{,}\ 
\end{equation*}%
and the upper estimate for $R_{n}\left( \alpha ,\tau \right) $ is%
\begin{equation*}
\left\vert R_{n}\left( \alpha ,\tau \right) \right\vert \leq \frac{2\tanh
\alpha }{1-\left\vert \frac{\tau \left( 1-\tau \right) }{n+\tau }\right\vert 
}\left\{ 
\begin{array}{c}
1\text{ \ if }n=2m\bigskip \\ 
\frac{1}{\left\vert n+\tau \right\vert }\text{ if }n=2m+1%
\end{array}%
\right. \text{, }m\in \mathbb{N}\text{.}
\end{equation*}
\end{theorem}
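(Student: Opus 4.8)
The plan is to convert the problem into a linear second-order ODE in normal form plus a Volterra integral equation. Write $f(\alpha)=C_n^{1/2-\tau-n}(\tanh\alpha)$ and recall that $y=C_n^{\lambda}(x)$ with $\lambda=\tfrac12-\tau-n$ solves the Gegenbauer equation $(1-x^2)y''-(2\lambda+1)xy'+n(n+2\lambda)y=0$. Substituting $x=\tanh\alpha$ and using $\frac{d}{d\alpha}=(1-x^2)\frac{d}{dx}$ turns this into
$$f''+2(n+\tau)\tanh\alpha\,f'+\frac{n(1-2\tau-n)}{\cosh^2\alpha}\,f=0.$$
I would then remove the first-order term by setting $v(\alpha)=\cosh^{n+\tau}\alpha\,f(\alpha)$; a direct computation, writing $\tanh^2\alpha=1-\cosh^{-2}\alpha$ and watching the $n^2$, $\tau n$ and $n$ terms cancel, brings it to the clean normal form
$$v''-(n+\tau)^2v=\frac{\tau(1-\tau)}{\cosh^2\alpha}\,v.$$
This is the crux: the unperturbed operator has elementary solutions $e^{\pm(n+\tau)\alpha}$, the perturbation coefficient is exactly $\tau(1-\tau)$ (the prefactor of the error term in the statement), and the weight $\cosh^{-2}\alpha$ is integrable on $[0,\infty)$, which is precisely what makes the eventual estimate uniform up to $\alpha=\infty$.

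Next I extract the leading term from the Cauchy data at $\alpha=0$. Since $C_n^\lambda(-x)=(-1)^nC_n^\lambda(x)$, the function $v$ has parity $(-1)^n$, so its data reduce to a single number: $v(0)=C_n^\lambda(0)$, $v'(0)=0$ for $n$ even, and $v(0)=0$, $v'(0)=\frac{d}{dx}C_n^\lambda(0)$ for $n$ odd, both read off from (\ref{Ck}). The corresponding solution of the unperturbed equation is $v_0=C_n^\lambda(0)\cosh((n+\tau)\alpha)$ in the even case and $v_0=\frac{1}{n+\tau}\frac{d}{dx}C_n^\lambda(0)\sinh((n+\tau)\alpha)$ in the odd case; expanding $\cosh,\sinh$ in exponentials and dividing by $\cosh^{n+\tau}\alpha$ reproduces exactly the displayed main term $\Omega_n(\tau)\cosh^{-(n+\tau)}\alpha\,e^{(n+\tau)\alpha}\bigl(D_n^++D_n^-e^{-2(n+\tau)\alpha}\bigr)$, provided one checks $D_n^+=D_n^-=\tfrac12$ (even) and $D_n^+=-D_n^-=\tfrac1{n+\tau}$ (odd), and identifies $\Omega_n(\tau)=C_n^\lambda(0)$ (even) and $\Omega_n(\tau)=\tfrac12\frac{d}{dx}C_n^\lambda(0)$ (odd). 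The latter identifications are short computations: two applications of the reflection formula $\Gamma(\tfrac12+z)\Gamma(\tfrac12-z)=\pi/\cos(\pi z)$ rewrite the $\Gamma$-quotient in $C_n^\lambda(0)$, whose arguments involve $\tfrac12-\tau-m$, into the form displayed in $\Omega_n(\tau)$, the factors $\cos(\pi(\tau+2m))/\cos(\pi(\tau+m))=(-1)^m$ absorbing the sign $(-1)^{[n/2]}$.

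Finally I control the remainder. With the Green's function $\sinh((n+\tau)(\alpha-s))/(n+\tau)$ of the unperturbed operator, the normal-form equation with its initial data is equivalent to
$$v(\alpha)=v_0(\alpha)+\frac{\tau(1-\tau)}{n+\tau}\int_0^\alpha\sinh\!\big((n+\tau)(\alpha-s)\big)\,\frac{v(s)}{\cosh^2 s}\,ds.$$
Dividing by $\Omega_n(\tau)e^{(n+\tau)\alpha}$ and using $e^{-(n+\tau)\alpha}\sinh((n+\tau)(\alpha-s))e^{(n+\tau)s}=\tfrac12\bigl(1-e^{-2(n+\tau)(\alpha-s)}\bigr)$ gives a closed scalar equation $R_n=\mathcal K\Psi_0+\frac{\tau(1-\tau)}{n+\tau}\mathcal K R_n$ for the quantity $R_n(\alpha,\tau)$ of the statement, where $\mathcal K\phi(\alpha)=\tfrac12\int_0^\alpha(1-e^{-2(n+\tau)(\alpha-s)})\cosh^{-2}s\,\phi(s)\,ds$ and $\Psi_0=D_n^++D_n^-e^{-2(n+\tau)s}$. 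For $n$ large, $\mathrm{Re}(n+\tau)>0$ yields $|1-e^{-2(n+\tau)(\alpha-s)}|\le 2$ on $0\le s\le\alpha$ and $\int_0^\alpha\cosh^{-2}s\,ds=\tanh\alpha$, hence the operator bound $\|\mathcal K\phi\|_\infty\le\tanh\alpha\,\|\phi\|_\infty$ on $[0,\alpha]$; together with $\|\Psi_0\|_\infty\le 1$ (even) and $\le 2/|n+\tau|$ (odd), a Neumann-series summation gives $\sup_{[0,\alpha]}|R_n|\le\|\Psi_0\|_\infty\tanh\alpha/(1-|\tau(1-\tau)/(n+\tau)|)$, which is dominated by the stated bound $\tfrac{2\tanh\alpha}{1-|\tau(1-\tau)/(n+\tau)|}$ for $n$ even and by the same quantity divided by $|n+\tau|$ for $n$ odd. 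The one genuinely delicate step is the normal-form reduction: one must see that the generic-looking coefficients collapse to exactly $-(n+\tau)^2+\tau(\tau-1)\cosh^{-2}\alpha$, since this single cancellation is what produces both the uniformity and the precise $\tau(1-\tau)$ and $1/(n+\tau)$ factors. Everything downstream — the $\Gamma$-reflection matching and the uniform Volterra estimate valid through $\alpha=\infty$ because $\cosh^{-2}$ is integrable — is then routine.
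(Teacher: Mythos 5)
Your proposal is correct and follows essentially the same route as the paper: transform the Gegenbauer equation to an ODE in $\alpha$, peel off the solution of the unperturbed constant-coefficient problem determined by the parity data $C_n^{1/2-\tau-n}(0)$ and $\frac{d}{dx}C_n^{1/2-\tau-n}(0)$, and bound the remainder through the Volterra equation with kernel $\tfrac12\bigl(1-e^{-2(n+\tau)(\alpha-s)}\bigr)\cosh^{-2}s$ using $\int_0^\alpha\cosh^{-2}s\,ds=\tanh\alpha$. The only difference is presentational: you reach that Volterra equation via the self-adjoint normal form $v''-(n+\tau)^2v=\tau(1-\tau)\cosh^{-2}\alpha\,v$ and its Green's function $\sinh((n+\tau)(\alpha-s))/(n+\tau)$, whereas the paper keeps the first-order term $2(n+\tau)w'$ and integrates twice with the factor $e^{2(n+\tau)\alpha}$ — the two formulations coincide after dividing by $\Omega_n(\tau)e^{(n+\tau)\alpha}$.
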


\begin{proof}
Gegenbauer polynomials $y\left( x\right) =C_{n}^{\nu }\left( x\right) $, $%
\nu =1/2-\tau -n$, are solutions of the initial value problem (see \cite[%
p.178, 3.15(21)]{Erdelyi1} and (\ref{Ck})) \ \ 
\begin{equation*}
(1-x^{2})\frac{d^{2}y}{dx^{2}}-(2\nu +1)x\frac{dy}{dx}+n(n+2\nu )y=0\text{,}
\end{equation*}%
\begin{equation*}
y\left( 0\right) =\Omega _{n}\left( \tau \right) \frac{1+\left( -1\right)
^{n}}{2}\text{,}
\end{equation*}%
\begin{equation*}
\frac{dy}{dx}|_{x=0}=\Omega _{n}\left( \tau \right) \left( 1-\left(
-1\right) ^{n}\right) \text{,}
\end{equation*}%
which on changing%
\begin{eqnarray}
x &=&\tanh \alpha \text{, }  \notag \\
\text{\ \ }y\left( \tanh \alpha \right) &=&\Omega _{n}\left( \tau \right) 
\frac{e^{\lambda \alpha \text{ }}}{\cosh ^{\lambda }\alpha }w\left( \alpha
\right) \text{, \ }  \label{Cnange} \\
\lambda &=&n+\tau \text{,}
\end{eqnarray}%
becomes\bigskip 
\begin{equation}
\frac{d^{2}w}{d\alpha ^{2}}+2\lambda \frac{dw}{d\alpha }-\frac{\tau \left(
1-\tau \right) }{\cosh ^{2}\alpha }w=0\text{,}  \label{dif1}
\end{equation}%
\begin{equation*}
w\left( 0\right) =f_{0}\text{,}\ \ f_{0}=\frac{1+\left( -1\right) ^{n}}{2}%
\text{,}
\end{equation*}%
\begin{equation*}
\frac{dw}{d\alpha }|_{\alpha =0}=\lambda f_{1}\text{,}\ \ f_{1}=\frac{%
1-\left( -1\right) ^{n}}{\lambda }-f_{0}\text{.}
\end{equation*}%
By using the multiplier $\exp \left( 2\lambda \alpha \right) $ and
integrating, we obtain from (\ref{dif1}) 
\begin{equation*}
\frac{dw}{d\alpha }=e^{-2\lambda \alpha \text{ }}\left( \tau \left( 1-\tau
\right) \int_{0}^{\alpha }\frac{e^{2\lambda s\text{ }}w\left( s\right) ds}{%
\cosh ^{2}s}+\lambda f_{1}\right) \text{,}
\end{equation*}%
\begin{equation*}
w\left( \alpha \right) =\tau \left( 1-\tau \right) \int_{0}^{\alpha
}e^{-2\lambda t\text{ }}\left( \int_{0}^{t}\frac{e^{2\lambda s\text{ }%
}w\left( s\right) ds}{\cosh ^{2}s}\right) dt+f_{1}\frac{1-e^{-2\lambda
\alpha \text{ }}}{2}+f_{0}\text{.}
\end{equation*}%
On integrating by parts the above equation turns into the Volterra equation
of the second kind%
\begin{equation}
w\left( \alpha \right) =\frac{\tau \left( 1-\tau \right) }{2\lambda }\mathbf{%
K}\left( w\left( s\right) \right) +w_{0}\left( \alpha \right) \text{, }
\label{Volt}
\end{equation}%
where the integral operator and free term are given by%
\begin{equation}
\mathbf{K}w\left( s\right) =\int_{0}^{\alpha }w\left( s\right) \frac{%
1-e^{-2\lambda \left( \alpha -s\right) \text{ }}}{\cosh ^{2}s}ds\text{,}
\end{equation}%
\begin{equation}
w_{0}\left( \alpha \right) =f_{1}\frac{1-e^{-2\lambda \alpha \text{ }}}{2}%
+f_{0}.  \label{w0}
\end{equation}%
The solution of (\ref{Volt}) can be presented in the form%
\begin{equation}
w\left( \alpha \right) =\frac{\tau \left( 1-\tau \right) }{\lambda }%
R_{n}\left( \alpha \right) +w_{0}\left( \alpha \right) \text{, }  \label{R}
\end{equation}%
where $R_{n}\left( \alpha \right) $ is a solution of the equation%
\begin{equation}
R_{n}\left( \alpha \right) =\frac{\tau \left( 1-\tau \right) }{2\lambda }%
\mathbf{K}\left( R_{n}\left( s\right) \right) +\mathbf{K}w_{0}\left(
s\right) \text{.}  \label{R-eq}
\end{equation}%
Combining (\ref{Cnange}), (\ref{R}), and (\ref{w0}), we arrive at (\ref{C}).

To obtain the upper estimate for $R_{n}\left( \alpha \right) $, note that%
\begin{equation*}
\left\vert w_{0}\left( s\right) \right\vert \leq \left\vert f_{1}\right\vert
+f_{0}\leq B_{n}\left( \tau \right) ,
\end{equation*}
\begin{equation}
B_{n}\left( \tau \right) =1+\left( -1\right) ^{n}+\frac{1-\left( -1\right)
^{n}}{\left\vert n+\tau \right\vert }=\left\{ 
\begin{array}{c}
2\text{ \ if }n=2m\bigskip \\ 
\frac{2}{\left\vert n+\tau \right\vert }\text{ if }n=2m+1%
\end{array}%
\right. \text{.}  \label{Bn}
\end{equation}%
Then, we get from (\ref{R-eq}) 
\begin{equation*}
\left\vert \mathbf{K}w_{0}\left( \alpha \right) \right\vert \leq
\int_{0}^{\alpha }\left\vert w_{0}\left( s\right) \right\vert \frac{%
\left\vert 1-e^{-2\lambda \left( \alpha -s\right) \text{ }}\right\vert }{%
2\cosh ^{2}s}ds\leq B_{n}\left( \tau \right) \tanh \alpha \text{,}
\end{equation*}%
\begin{equation*}
\left\vert R_{n}\left( \alpha \right) \right\vert \leq \sup_{\alpha \geq
0}\left\vert R_{n}\left( \alpha \right) \right\vert \left\vert \frac{\tau
\left( 1-\tau \right) }{\lambda }\right\vert \tanh \alpha +B_{n}\left( \tau
\right) \tanh \alpha \text{,}
\end{equation*}%
and hence the estimates%
\begin{equation*}
\sup_{\alpha \geq 0}\left\vert R_{n}\left( \alpha \right) \right\vert \leq 
\frac{B_{n}\left( \tau \right) }{1-\left\vert \frac{\tau \left( 1-\tau
\right) }{\lambda }\right\vert }=A_{n}\left( \tau \right) \text{,}
\end{equation*}%
\begin{equation*}
\left\vert R_{n}\left( \alpha \right) \right\vert \leq A_{n}\left( \tau
\right) \tanh \alpha \text{,}
\end{equation*}%
complete the proof.
\end{proof}

We also\ have

\begin{theorem}
Let $\alpha \in \left[ 0,\infty \right] $. For all $n\in \mathbb{N}_{0}$ and$%
\ \tau \in D_{\tau }\left( r\right) $, $D_{\tau }\left( r\right)
=\{\left\vert \tau -1\right\vert \leq r\}\cap \{\func{Re}\tau \leq 1\}$, $%
1<r<\sqrt{2}$, functions $e^{-\alpha n}\mathfrak{C}_{n}\left( \alpha ,\tau
\right) $ are uniformly bounded, that is,%
\begin{equation}
\left\vert \frac{2^{\tau -n\text{ }}\left( 2\tau \right) _{n}}{\left( \frac{1%
}{2}+\tau \right) _{n}}e^{-\alpha n}\cosh ^{n}\alpha C_{n}^{1/2-\tau
-n}\left( \tanh \alpha \right) \right\vert \leq \mathcal{K}_{\tau }<\infty 
\text{,}  \label{Geg-est}
\end{equation}%
where a constant $\mathcal{K}_{\tau }$ does not depend on $n$.
\end{theorem}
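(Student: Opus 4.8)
The plan is to insert the representation \eqref{C} for $C_{n}^{1/2-\tau -n}\left( \tanh \alpha \right) $ into the definition \eqref{CC} of $\mathfrak{C}_{n}\left( \alpha ,\tau \right) $ and to show the resulting product splits into factors I can bound separately and uniformly in $n$ and $\alpha $. Since \eqref{C} is an exact identity (with $R_{n}$ satisfying the stated bound) and $\cosh ^{n}\alpha /\cosh ^{n+\tau }\alpha =\cosh ^{-\tau }\alpha $, the two formulas combine into
\begin{equation*}
e^{-\alpha n}\mathfrak{C}_{n}\left( \alpha ,\tau \right) =P_{n}^{\ast }\,\frac{e^{\tau \alpha }}{\cosh ^{\tau }\alpha }\left( D_{n}^{+}+D_{n}^{-}e^{-2\left( n+\tau \right) \alpha }+\frac{\tau \left( 1-\tau \right) }{n+\tau }R_{n}\left( \alpha ,\tau \right) \right) ,
\end{equation*}
where $P_{n}^{\ast }=2^{\tau -n}\left( 2\tau \right) _{n}\Omega _{n}\left( \tau \right) /\left( \tfrac{1}{2}+\tau \right) _{n}$. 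It then suffices to control the prefactor $P_{n}^{\ast }$, the middle factor $e^{\tau \alpha }/\cosh ^{\tau }\alpha $, and the bracket so that their product stays bounded as $n\rightarrow \infty $.

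The middle factor equals $\left( 2/\left( 1+e^{-2\alpha }\right) \right) ^{\tau }$, whose modulus lies between $1$ and $2^{\func{Re}\tau }$ for $\alpha \in \left[ 0,\infty \right] $; as the limit $2^{\tau }$ at $\alpha =\infty $ exists, it is bounded uniformly on the compact interval. This is also where the hypothesis $r<\sqrt{2}$ enters: it forces $\func{Re}\tau \geq 1-r>1-\sqrt{2}$, hence $\func{Re}\left( n+\tau \right) \geq 2-r>0$ for every $n\geq 1$, so that $\left\vert e^{-2\left( n+\tau \right) \alpha }\right\vert \leq 1$ on $\alpha \geq 0$ and the bracket cannot blow up at $\alpha =\infty $. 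For the bracket I use the explicit $D_{n}^{\pm }$ with the bound on $R_{n}$: for even $n$ one has $D_{n}^{\pm }=1/2$ and the $R_{n}$-term is $O\left( 1/n\right) $, so the bracket is $O\left( 1\right) $; for odd $n$ one has $D_{n}^{\pm }=\pm 1/\left( n+\tau \right) $ and $\left\vert R_{n}\right\vert =O\left( 1/\left\vert n+\tau \right\vert \right) $, so the bracket is $O\left( 1/n\right) $.

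The heart of the argument is the prefactor. Writing $m=\left[ n/2\right] $, cancelling the common $\Gamma \left( \tfrac{1}{2}+\tau +n\right) $ that appears in $\Omega _{n}$ and in $\left( \tfrac{1}{2}+\tau \right) _{n}$, and applying the Legendre duplication formula $\Gamma \left( 2z\right) =2^{2z-1}\pi ^{-1/2}\Gamma \left( z\right) \Gamma \left( z+\tfrac{1}{2}\right) $ to $\Gamma \left( 2\tau +n\right) $ (with $z=\tau +m$ when $n=2m$ and $z=\tau +m+\tfrac{1}{2}$ when $n=2m+1$) produces an exact cancellation of $\Gamma \left( m+\tfrac{1}{2}+\tau \right) $ and collapses the powers of $2$ to $2^{3\tau -1}$, giving
\begin{equation*}
P_{n}^{\ast }=\frac{\left( -1\right) ^{n}2^{3\tau -1}\Gamma \left( \tfrac{1}{2}+\tau \right) }{\sqrt{\pi }\,\Gamma \left( 2\tau \right) }\times \left\{
\begin{array}{ll}
\Gamma \left( m+\tau \right) /\Gamma \left( m+1\right) , & n=2m, \\
\Gamma \left( m+1+\tau \right) /\Gamma \left( m+1\right) , & n=2m+1.
\end{array}
\right.
\end{equation*}
The scalar factor is bounded on the compact set $D_{\tau }\left( r\right) $ (note $\tau =-\tfrac{1}{2}$ lies outside $D_{\tau }\left( r\right) $, so $\Gamma \left( \tfrac{1}{2}+\tau \right) $ has no pole there, and $1/\Gamma \left( 2\tau \right) $ is entire), and the standard ratio estimate $\left\vert \Gamma \left( m+a\right) /\Gamma \left( m+b\right) \right\vert \leq Cm^{\func{Re}\left( a-b\right) }$, uniform for $a,b$ in $D_{\tau }\left( r\right) $ since the arguments stay in the right half-plane for $m\geq 1$, yields $\left\vert P_{n}^{\ast }\right\vert =O\left( m^{\func{Re}\tau -1}\right) $ for even $n$ and $\left\vert P_{n}^{\ast }\right\vert =O\left( m^{\func{Re}\tau }\right) $ for odd $n$.

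The main obstacle, and the reason the exact rates above are needed, is the odd case: there the prefactor grows like $m^{\func{Re}\tau }$ while the bracket decays like $1/m$, so their product is $O\left( m^{\func{Re}\tau -1}\right) $, which is bounded precisely because the hypothesis guarantees $\func{Re}\tau \leq 1$; in the even case the prefactor and bracket are already separately bounded for the same reason. Finally, for the finitely many small $n$ (where $\left\vert \tau \left( 1-\tau \right) /\left( n+\tau \right) \right\vert <1$ may fail and \eqref{C} is unavailable) I bound $e^{-\alpha n}\mathfrak{C}_{n}$ directly: each such function is continuous on the compact set $\left[ 0,\infty \right] \times D_{\tau }\left( r\right) $ (with $\mathfrak{C}_{0}=2^{\tau }$), hence bounded, and the maximum over these finitely many $n$ together with the uniform large-$n$ bound furnishes the constant $\mathcal{K}_{\tau }$ in \eqref{Geg-est}.
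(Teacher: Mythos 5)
Your argument is correct, and its engine is the same as the paper's: the identity (\ref{C}) is just the paper's substitution $C_{n}^{1/2-\tau-n}(\tanh\alpha)=\Omega_{n}(\tau)e^{\lambda\alpha}\cosh^{-\lambda}\alpha\,w(\alpha)$ with the Volterra solution $w$ written out as $D_{n}^{+}+D_{n}^{-}e^{-2\lambda\alpha}+\tau(1-\tau)\lambda^{-1}R_{n}$, and the paper likewise disposes of the Pochhammer/Gamma prefactor by the Legendre duplication formula and closes with $\func{Re}\tau\leq 1$ via the bounded sequence $\delta(n,\tau)\sim[n/2]^{\func{Re}\tau-1}/|\Gamma(\tau)|$; your even/odd bookkeeping ($P_{n}^{\ast}=O(m^{\func{Re}\tau-1})$ times an $O(1)$ bracket, versus $O(m^{\func{Re}\tau})$ times an $O(1/n)$ bracket) is the same cancellation the paper achieves through $B_{n}(\tau)$. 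The one genuine divergence is how the Volterra denominator $1-|\tau(1-\tau)/(n+\tau)|$ is kept away from zero: the paper spends roughly half its proof on the geometric verification that $|\tau(1-\tau)|<|1+\tau|$ everywhere on $D_{\tau}(r)$, which yields a single bound valid for every $n\geq 1$ and explains why the region $D_{\tau}(r)$ has its particular shape; you instead observe that the contraction constant is automatically small for all large $n$ and absorb the finitely many remaining indices by continuity of $e^{-\alpha n}\mathfrak{C}_{n}$ on the compact set $[0,\infty]\times D_{\tau}(r)$ (legitimate, since $(\tfrac12+\tau)_{n}$ and $\Gamma(\tfrac12+\tau)$ have no zeros or poles there). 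Your route is shorter and makes clear that, asymptotically, only $\func{Re}\tau\leq 1$, the avoidance of the poles of $\Gamma(\tfrac12+\tau)$, and $\func{Re}\tau>1-\sqrt{2}$ (so that $|e^{-2(n+\tau)\alpha}|\leq 1$) actually matter; the paper's route buys an explicit, fully quantitative constant $\mathcal{K}_{\tau}$ as in (\ref{CC1}) without a case split over $n$.
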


\begin{proof}
To prove (\ref{Geg-est}), we note that $\mathfrak{C}_{0}\left( \alpha ,\tau
\right) =2^{\tau }$, and for $n\geq 1$, 
\begin{equation*}
e^{-\alpha n}\mathfrak{C}_{n}\left( \alpha ,\tau \right) =\Omega _{n}\left(
\tau \right) \frac{2^{\tau -n\text{ }}\left( 2\tau \right) _{n}}{\left( 
\frac{1}{2}+\tau \right) _{n}}\frac{e^{\tau \alpha }}{\cosh ^{\tau }\alpha }%
w\left( \alpha \right) \text{,}
\end{equation*}%
where $\Omega _{n}\left( \tau \right) $ is given in Theorem 2 and $w\left(
\alpha \right) $ is a solution the Volterra equation (\ref{Volt}) with the
estimate%
\begin{equation*}
\sup \left\vert w\left( \alpha \right) \right\vert \leq \frac{\sup
\left\vert w_{0}\left( \alpha \right) \right\vert }{1-\left\vert \frac{\tau
\left( 1-\tau \right) }{n+\tau }\right\vert }\leq \frac{B_{n}\left( \tau
\right) }{1-\left\vert \frac{\tau \left( 1-\tau \right) }{1+\tau }%
\right\vert }
\end{equation*}%
that is valid under the condition $\left\vert \tau \left( 1-\tau \right)
\right\vert <\left\vert 1+\tau \right\vert $. \ Show that for $\tau \in
D_{\tau }\left( r\right) $ the above-mentioned condition is fulfilled. Let $%
\left\vert \tau -1\right\vert \leq r$. We write 
\begin{equation}
\left\vert \frac{\tau \left( 1-\tau \right) }{1+\tau }\right\vert ^{2}\leq
r^{2}\left\vert \frac{\tau }{1+\tau }\right\vert ^{2}<1  \label{t-ineq}
\end{equation}%
or by denoting $\func{Re}\tau =\xi $ and $\func{Im}\tau =\zeta $,%
\begin{equation}
\left( r^{2}-1\right) \xi ^{2}-2\xi +\left( r^{2}-1\right) \zeta ^{2}-1<0
\label{t0}
\end{equation}%
Note that $0\leq \zeta ^{2}\leq \zeta _{0}^{2}=r^{2}-\left( \xi -1\right)
^{2}$ for any given $\xi $. Hence, as $r>1$, the above inequality will be
fulfilled for all $\zeta $ if it is valid as $\zeta ^{2}=\zeta _{0}^{2}$ for
which (\ref{t0}) becomes 
\begin{equation*}
2\left( r^{2}-2\right) \xi +r^{2}\left( r^{2}-2\right) <0\text{,}
\end{equation*}%
and then (\ref{t-ineq}) is valid as $1<r^{2}<2\ $if%
\begin{equation*}
\func{Re}\tau >-\frac{r^{2}}{2}\text{.}
\end{equation*}%
Because the circle $\left\vert \tau -1\right\vert =r$ is on the right of the
line $\func{Re}\tau =-r^{2}/2$, we obtain that $\left\vert \tau \left(
1-\tau \right) \right\vert <\left\vert 1+\tau \right\vert $ as $\tau \in
D_{\tau }\left( r\right) $.

To prove the theorem for $n\geq 1$, $\tau \in D_{\tau }\left( r\right) $,
and $\alpha \in \left[ 0,\infty \right] $, we note, by using the Legendre
duplication formula, that%
\begin{equation*}
\left\vert \left( 2\tau \right) _{n}\frac{e^{\tau \alpha }}{\cosh ^{\tau
}\alpha }\right\vert B_{n}\left( \tau \right) \leq \mathcal{B}_{n}\left(
\tau \right) =\left\vert \frac{\Gamma \left( \tau +\left[ \frac{n}{2}\right]
+\frac{1}{2}\right) \Gamma \left( \tau +\left[ \frac{n}{2}\right] \right) }{%
2^{-\left\vert \func{Re}\tau \right\vert -n-1}\Gamma \left( \frac{1}{2}+\tau
\right) \Gamma \left( \tau \right) }\right\vert \text{.}
\end{equation*}%
Hence, as $\tau \in D_{\tau }\left( r\right) $ and $\alpha \in \left[
0,\infty \right] $, 
\begin{eqnarray}
\left\vert e^{-\alpha n}\mathfrak{C}_{n}\left( \alpha ,\tau \right)
\right\vert &\leq &\left\vert \frac{2^{\tau -n\text{ }}\Gamma \left( \frac{1%
}{2}+\tau \right) \mathcal{B}_{n}\left( \tau \right) \Omega _{n}\left( \tau
\right) }{\left( 1-\left\vert \frac{\tau \left( 1-\tau \right) }{1+\tau }%
\right\vert \right) }\right\vert  \notag \\
&\leq &\frac{2^{\left\vert \func{Re}\tau \right\vert +\func{Re}\tau +1\text{ 
}}\left\vert 1+\tau \right\vert }{\left\vert 1+\tau \right\vert -\left\vert
\tau \left( 1-\tau \right) \right\vert }\delta \left( n,\tau \right)
\label{CC1}
\end{eqnarray}%
where $\ 1-\sqrt{2}<1-r\leq \func{Re}\tau \leq 1$ and the positive numbers%
\begin{equation*}
\delta \left( n,\tau \right) =\frac{\left\vert \Gamma \left( \tau +\left[ 
\frac{n}{2}\right] \right) \right\vert }{\left\vert \Gamma \left( \tau
\right) \right\vert \Gamma \left( \left[ \frac{n}{2}\right] +1\right) }\leq
\left\{ 
\begin{array}{c}
1\text{, }n=1\bigskip \\ 
\frac{\Gamma \left( \left[ \frac{n}{2}\right] +\func{Re}\tau \right) }{%
\left\vert \Gamma \left( \tau \right) \right\vert \Gamma \left( \left[ \frac{%
n}{2}\right] +1\right) }\text{, }n\geq 2%
\end{array}%
\right. \text{, }
\end{equation*}%
constitute a bounded sequence because $\lim_{n\rightarrow \infty }\delta
\left( n,\tau \right) \ $is finite. Then, for the fixed $\tau $, $\delta
\left( n,\tau \right) \leq \max \delta \left( n,\tau \right) $, and the
right $\ $side of (\ref{CC1}) is less than a certain constant $\mathcal{K}%
_{\tau }$ which does not depend on $n$. Substituting (\ref{CC}) completes
the proof.
\end{proof}

Another special case of the generating function (\ref{Laur1}) that will be
employed in this article is

\begin{eqnarray}
\left( 1-wz\right) ^{\tau }\left( 1+\frac{z}{w}\right) ^{-\tau }\left(
1+z^{2}\right) ^{-\rho } &=&\sum_{n=0}^{\infty }\mathcal{G}_{n}\left( \tau
,\rho ,w\right) z^{n}\text{,}  \label{5aa} \\
z,w,\tau &\in &\mathbb{C}\text{, }\left\vert w\right\vert <1\text{,}
\end{eqnarray}
under one of the conditions: 1) $\left\vert z/w\right\vert <1$; 2) $%
\left\vert z/w\right\vert =1$, $\tau <0$; 3) $\left\vert z/w\right\vert =1$, 
$z/w\neq -1$, $\tau <1$.

Multiplication of power series for $\left( 1-wz\right) ^{\tau }\left( 1+%
\frac{z}{w}\right) ^{-\tau }$ and $\left( 1+z^{2}\right) ^{-\tau }$ yields a
representation of the three variable polynomials $w^{n}\mathcal{G}_{n}\left(
\tau ,\rho ,w\right) $ in the form of a sum of Gauss hypergeometric
polynomials, namely

\begin{equation*}
w^{n}\mathcal{G}_{n}\left( \tau ,\rho ,w\right) =\sum_{k=0}^{\left[ \frac{n}{%
2}\right] }\frac{\left( -1\right) ^{k}\left( \rho \right) _{k}}{k!}%
g_{n-2k}\left( \tau ,0,\frac{w^{2}+1}{w^{2}}\right) w^{n-2k}\text{,}
\end{equation*}%
and as follows from (\ref{L-as}) 
\begin{equation*}
\mathcal{G}_{n}\left( \tau ,\rho ,w\right) \sim \frac{\left( 1+w^{2}\right)
^{\tau -\rho }}{n^{1-\tau }\Gamma \left( \tau \right) \left( -w\right) ^{n}}%
\text{ as }-\tau \notin \mathbb{N}\text{, }n\rightarrow \infty \text{.}
\end{equation*}

The changes $z=i\zeta $ and $w=i\eta $ turn (\ref{5aa}) into the generating
function

\begin{equation}
\text{ }\left( 1+\eta \zeta \right) ^{\tau }\left( 1+\frac{\zeta }{\eta }%
\right) ^{-\tau }\left( 1-\zeta ^{2}\right) ^{-\rho }=\sum_{n=0}^{\infty }%
\widehat{\mathcal{G}}_{n}\left( \tau ,\rho ,\eta \right) \zeta ^{n}\text{,}
\label{gen5bb}
\end{equation}%
where $\left\vert \eta \right\vert <1$, and in addition, 1)$\ \left\vert
\zeta /\eta \right\vert <1$; or 2) $\left\vert \zeta /\eta \right\vert =1$, $%
\tau >0$; or 3) $\left\vert \zeta /\eta \right\vert =1$, $\zeta /\eta \neq
-1 $, $\tau >-1$;%
\begin{eqnarray*}
\widehat{\mathcal{G}}_{n}\left( \tau ,\rho ,\eta \right) &=&i^{n}\mathcal{G}%
_{n}\left( \tau ,\rho ,i\eta \right) =\left( -1\right) ^{n}\sum_{k=0}^{\left[
\frac{n}{2}\right] }\frac{\left( \rho \right) _{k}}{k!}g_{n-2k}\left( \tau
,0,\frac{\eta ^{2}-1}{\eta ^{2}}\right) \eta ^{n-2k}\text{,} \\
\widehat{\mathcal{G}}_{n}\left( \tau ,\rho ,w\right) &\sim &\frac{\left(
1-\eta ^{2}\right) ^{\tau -\rho }}{n^{1-\tau }\Gamma \left( \tau \right)
\left( -\eta \right) ^{n}}\text{ as }-\tau \notin \mathbb{N}\text{, }%
n\rightarrow \infty \text{.}
\end{eqnarray*}

The second family of the generating functions employed in this article is
connected with the generating function%
\begin{equation}
\left( 1+x^{\pm 1}\sqrt{1\pm z}\right) ^{-\tau }=\sum_{n=0}^{\infty }%
\mathfrak{D}_{n}^{\left( \tau \right) }\left( x^{\pm 1}\right) \frac{\left(
\mp z\right) ^{n}}{2^{n}}\text{,}  \label{sq1}
\end{equation}%
with $0<x\leq 1$, $\left\vert z\right\vert <1$, $\tau \in \mathbb{C}$, $%
\sqrt{1\pm z}>0$ as $\arg \left( 1\pm z\right) =0$, and 
\begin{equation*}
\mathfrak{D}_{n}^{\left( \tau \right) }\left( x^{\pm 1}\right) =\frac{\left(
\tau \right) _{n}x^{\pm n}}{n!\left( x^{\pm 1}+1\right) ^{n+\tau }}F\left(
1-n,n;1-n-\tau ;\frac{1+x}{2x}\right) \text{, }
\end{equation*}%
which was obtained in \cite{Malits} under some less restrictive conditions
on parameters. The functions $\mathfrak{D}_{n}^{\left( \tau \right) }\left(
x^{\pm 1}\right) $ are analytic functions of the parameter $\tau $ and can
be also expressed in terms of Jacobi polynomials of order $n$, namely%
\begin{equation*}
\mathfrak{D}_{0}^{\left( \tau \right) }\left( x^{\pm 1}\right) =\left(
x^{\pm 1}+1\right) ^{-\tau }\text{, }\mathfrak{D}_{n}^{\left( \tau \right)
}\left( x^{\pm 1}\right) =\frac{\tau x^{\pm n}P_{n-1}^{\left( n+\tau
,-n-\tau \right) }\left( x^{\mp 1}\right) }{n\left( x^{\pm 1}+1\right)
^{n+\tau }}\text{.}
\end{equation*}

As $x=1$, 
\begin{equation*}
\mathfrak{D}_{n}^{\left( \tau \right) }\left( 1\right) =\frac{\tau }{%
2^{n+\tau }n}P_{n-1}^{\left( n+\tau ,-n-\tau \right) }\left( 1\right) =\frac{%
\tau \Gamma \left( \tau +2n\right) }{2^{n+\tau }n!\Gamma \left( \tau
+n+1\right) }\text{, }
\end{equation*}%
and by simple manipulations with gamma functions we obtain the series%
\begin{eqnarray}
\left( 1+\sqrt{1-z}\right) ^{-\tau } &=&2^{-\tau }\sum_{n=0}^{\infty }\frac{%
\left( \frac{\tau }{2}\right) _{n}\left( \frac{\tau +1}{2}\right) _{n}}{%
\left( \tau +1\right) _{n}}\frac{z^{n}}{n!}\text{,}  \label{gen4} \\
\left\vert z\right\vert &\leq &1\text{, }\tau \in \mathbb{C}\text{.}  \notag
\end{eqnarray}%
For another derivation of (\ref{gen4}) see \cite[p.101, 2.8(6)]{Erdelyi1}.

A new generating function is%
\begin{equation}
\left( 1-zt\right) ^{-\rho }\left( 1+\sqrt{1-z}\right) ^{-\tau }=2^{-\tau
}\sum_{n=0}^{\infty }\mathfrak{p}_{n}\left( \rho ,\tau ,t\right) z^{n}\text{%
, }  \label{gen5}
\end{equation}%
\ where the three variable polynomials $\mathfrak{p}_{n}\left( \rho ,\tau
,t\right) $ are expressed in terms of the Clausen's generalized
hypergeometric polynomials: 
\begin{eqnarray}
\mathfrak{p}_{n}\left( \rho ,\tau ,t\right) &=&\ _{3}F_{2}\left( 
\begin{array}{c}
-n,\frac{\tau }{2},\frac{\tau +1}{2}; \\ 
\tau +1,1-\rho -n;%
\end{array}%
\frac{1}{t}\right) \frac{t^{n}\left( \rho \right) _{n}}{n!}  \notag \\
&=&\text{ }_{3}F_{2}\left( 
\begin{array}{c}
-n,-\tau -n,\rho ; \\ 
1-\frac{\tau }{2}-n,\frac{1-\tau }{2}-n;%
\end{array}%
t\right) \frac{\tau \left( \tau +n+1\right) _{n-1}}{2^{2n}n!}\text{,}
\label{pn}
\end{eqnarray}%
$\rho ,\tau ,t,z\in \mathbb{C}$, $\left\vert z\right\vert \leq 1$, and $%
\left\vert zt\right\vert <1$; or $\left\vert zt\right\vert =1$, $\rho <0$;
or $\left\vert zt\right\vert =1$, $zt\neq 1$, $\rho <1$. The above
generating function is derived with the aid of the identity

\begin{equation}
\left( a\right) _{n-k}=\frac{\left( -1\right) ^{k}\left( a\right) _{n}}{%
\left( 1-a-n\right) _{k}}  \label{ak}
\end{equation}%
by multiplying the power series (\ref{gen4}) and the binomial series for $%
\left( 1-zt\right) ^{-\rho }$.

The asymptotic behavior of $\mathfrak{p}_{n}\left( \rho ,\tau ,t\right) $ as 
$n\rightarrow \infty $ can be found in the manner of Theorem 1. In
particular, we have as $n\rightarrow \infty $,

\begin{equation}
\mathfrak{p}_{n}\left( \rho ,\tau ,1\right) =\sum_{k=0}^{K}\left( -1\right)
^{k}\left( \tau \right) _{k}\frac{\left( \rho -\frac{k}{2}\right) _{n}}{k!n!}%
+O\left( n^{\frac{K-1}{2}-\rho }\right) \text{,}  \label{as-p}
\end{equation}%
where $K$ is an arbitrary fixed integer if $2\rho \notin \mathbb{N}$ and $%
K=2\rho -1$ if $2\rho \in \mathbb{N}$.

Another generating function%
\begin{eqnarray}
\left( 1+tz\right) ^{-\nu }\left( \frac{1+\sqrt{1\pm z^{2}}}{2}\right)
^{-\mu } &=&\sum_{n=0}^{\infty }\Omega _{n}^{\pm }\left( \nu ,\mu ,t\right)
z^{n}\text{,}  \label{sq2} \\
\left\vert tz\right\vert &<&1\text{, }\left\vert z\right\vert <1\text{,} 
\notag
\end{eqnarray}%
with the three variable polynomials

\begin{equation}
\Omega _{n}^{\pm }\left( \nu ,\mu ,t\right) =\text{ }_{4}F_{3}\left( 
\begin{array}{c}
-\frac{n}{2},\frac{1-n}{2},\frac{\mu }{2},\frac{\mu +1}{2}; \\ 
\mu +1,\frac{1-\nu -n}{2},1-\frac{\nu +n}{2};%
\end{array}%
\mp \frac{1}{t^{2}}\right) \frac{\left( \nu \right) _{n}\left( -t\right) ^{n}%
}{n!}\text{,}  \label{Sigm1}
\end{equation}%
can be derived by multiplying the series (\ref{gen4}) (with $\mp z^{2}$
instead of $z$) and the binomial series for $\left( 1+tz\right) ^{-\nu }$.
Then,%
\begin{equation*}
\Omega _{n}^{\pm }\left( \nu ,\mu ,x\right) =\left( -t\right)
^{n}\sum_{k=0}^{\left[ \frac{n}{2}\right] }\frac{\left( \frac{\mu }{2}%
\right) _{k}\left( \frac{\mu +1}{2}\right) _{k}\left( \nu \right) _{n-2k}}{%
k!\left( \mu +1\right) _{k}\left( n-2k\right) !}\left( \mp 1\right)
^{k}t^{-2k}
\end{equation*}%
and on making use of identities (\ref{ak}), $\left( n-2k\right) !=n!/\left(
-n\right) _{2k}$, and $\left( a\right) _{2k}=2^{2k}\left( \frac{a}{2}\right)
_{k}\left( \frac{a+1}{2}\right) _{k}$ we arrive at (\ref{Sigm1}).

By using the Cauchy product of series (\ref{sq1}) and (\ref{sq2}), we write
the generating function (\ref{gen4})

\begin{eqnarray}
\sum_{n=0}^{\infty }\mathfrak{N}_{n,\nu ,\mu }^{\pm }\left( x\right) z^{n}
&=&\frac{\left( 1+x^{\pm 1}\sqrt{1\pm z^{2}}\right) ^{\nu }\left( 1+\sqrt{%
1\pm z^{2}}\right) ^{-\mu }}{2^{-\mu }\left( 1+\frac{1}{\sqrt{\left\vert
x^{\mp 2}-1\right\vert }}z\right) ^{\nu }}\text{,}  \label{Final-gen} \\
\text{ \ }z,\nu ,\mu &\in &\mathbb{C}\text{, \ \ }\left\vert z\right\vert <1%
\text{, \ }0<x<1\text{, \ }\frac{\left\vert z\right\vert }{\sqrt{\left\vert
x^{\mp 2}-1\right\vert }}<1\text{,}  \notag
\end{eqnarray}%
where the closed-form expression for $\mathfrak{N}_{n}^{\pm }\left( x\right) 
$ in terms of Gauss hypergeometric polynomials and generalized
hypergeometric polynomials is given by the sum%
\begin{equation*}
\mathfrak{N}_{n,\nu ,\mu }^{\pm }\left( x\right) =\sum_{n=0}^{\left[ \frac{n%
}{2}\right] }\frac{\left( \mp 1\right) ^{k}}{2^{k}}\mathfrak{D}_{k}^{\left(
-\nu \right) }\left( x^{\pm 1}\right) \Omega _{n-2k}^{\pm }\left( \nu ,\mu
,\left\vert x^{\mp 2}-1\right\vert ^{-1/2}\right) \text{.}
\end{equation*}%
One can see that $\mathfrak{N}_{n,\nu ,\mu }^{\pm }\left( x\right) $ is a
polynomial in the parameter $\mu $ and an analytic function of the parameter 
$\nu $.

The large order asymptotics of $\mathfrak{N}_{n,\nu }^{\pm }\left( x\right) $
can be again obtained by Darboux's method:%
\begin{equation*}
\mathfrak{N}_{n,\nu ,\mu }^{-}\left( x\right) \sim \frac{2^{\nu +\mu }\left(
-1\right) ^{n}\left( 1+x\right) ^{-\mu }}{\Gamma \left( \nu \right) \left(
1-x^{2}\right) ^{\frac{n}{2}}n^{1-\nu }}\left( 1+\frac{\mathfrak{U}%
_{0,n}\left( \nu ,\mu ,x\right) }{n}\right) \text{, }x>0\text{;}
\end{equation*}%
and if $0<x<2^{-1/2}$,%
\begin{equation*}
\mathfrak{N}_{n,\nu ,\mu }^{+}\left( x\right) \sim \frac{2^{\mu +\frac{1}{2}%
}\left( \mu -\nu x\right) \cos \left( \frac{\pi n}{2}+\nu \arcsin x\right) }{%
\sqrt{\pi }\left( 1-x^{2}\right) ^{-\frac{\nu }{2}}n^{3/2}}\left( 1+\frac{%
\mathfrak{U}_{1,n}\left( \nu ,\mu ,x\right) }{n}\right) \text{;}
\end{equation*}%
if $x=2^{-1/2}$,

\begin{equation*}
\mathfrak{N}_{n,\nu ,\mu }^{+}\left( x\right) \sim \frac{2^{\nu +\mu }\left(
1+\sqrt{2}\right) ^{-\mu }}{\left( -1\right) ^{n}\Gamma \left( \nu \right)
n^{1-\nu }}+\frac{\left( \sqrt{2}\mu -\nu \right) \cos \frac{\pi \left(
2n+\nu \right) }{4}}{2^{\frac{\nu }{2}-\mu }\sqrt{\pi }n^{\frac{3}{2}}}%
\left( 1+\frac{\mathfrak{U}_{2,n}\left( \nu ,\mu ,x\right) }{n}\right) \text{%
;}
\end{equation*}%
if $2^{-1/2}<x<1$,%
\begin{equation*}
\mathfrak{N}_{n,\nu ,\mu }^{+}\left( x\right) \sim \frac{\left( -1\right)
^{n}2^{\nu +\mu }x^{n+\mu }\left( 1+x\right) ^{-\mu }}{\left( 1-x^{2}\right)
^{\frac{n}{2}}\Gamma \left( \nu \right) n^{1-\nu }}\left( 1+\frac{\mathfrak{U%
}_{3,n}\left( \nu ,\mu ,x\right) }{n}\right) \text{.}
\end{equation*}%
In the foregoing formulas, $\lim_{n\rightarrow \infty }\left( n^{-1}%
\mathfrak{U}_{l,n}\right) =0$ and $\mathfrak{U}_{l,n}$ are analytic
functions of parameters $\nu $ and $\mu $.

\section{Series relating Ferrers functions $P_{\protect\nu }^{-\protect\mu %
}\left( x\right) $ and associated Legendre functions $P_{\protect\lambda }^{-%
\protect\sigma }\left( 1/x\right) $}

We commence with the simplest case that is established by deriving relations
between $I_{3}^{+}\left( \alpha ,\sigma ,\nu \right) $ and $I_{3}^{-}\left(
\alpha ,\sigma ,\nu \right) $.

\begin{theorem}
If $\mu $,$\nu \in \mathbb{C}$, there are mutually inverse series: for $x\in
\left( 2^{-1/2},1\right) $, $x=2^{-1/2}$ as $\func{Re}\left( 3\nu -\mu
\right) <-1$, and $x\in \left( 0,1\right) $ as $-\nu \in \mathbb{N}$ or $-%
\frac{\mu +\nu +1}{2}\in \mathbb{N}_{0}$, 
\begin{equation}
P_{\nu }^{-\mu }\left( x\right) =\sum_{n=0}^{\infty }\frac{\left( \frac{\mu
+\nu +1}{2}\right) _{n}\left( \nu +1\right) _{n}}{\left( -2\right)
^{-n}n!\left( 1-x^{2}\right) ^{-\frac{n}{2}}x^{\nu +n+1}}P_{\nu +n}^{-\mu
-n}\left( \frac{1}{x}\right) \text{,}  \label{rel-aa}
\end{equation}%
and for$\ x\in \left( 0,1\right) $, 
\begin{equation}
\frac{P_{\nu }^{-\mu }\left( \frac{1}{x}\right) }{x^{\nu +1}}%
=\sum_{n=0}^{\infty }\frac{\left( \frac{\mu +\nu +1}{2}\right) _{n}\left(
\nu +1\right) _{n}}{2^{-n}n!\left( 1-x^{2}\right) ^{-\frac{n}{2}}}P_{\nu
+n}^{-\mu -n}\left( x\right) \text{.}  \label{rel-ab}
\end{equation}
\end{theorem}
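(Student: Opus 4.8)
The plan is to realize both sides of each identity through the integral representations \eqref{rep55} and \eqref{rep66}, and to link the two integrals $I_{3}^{+}$ and $I_{3}^{-}$ by a single binomial expansion of the integrand. First I would fix $\sigma =\mu -\nu -1$ and set $p=\tfrac{\mu +\nu +1}{2}$, so that the common denominator exponent is $\sigma +2\nu +2=2p$. With $x=\tanh \alpha \in \left( 0,1\right) $ and $1/x=\coth \alpha $, formula \eqref{rep55} produces $P_{\nu }^{-\mu }\left( x\right) $ out of $I_{3}^{+}\left( \alpha ,\mu -\nu -1,\nu \right) $, while \eqref{rep66} produces $P_{\nu +n}^{-\mu -n}\left( 1/x\right) $ out of $I_{3}^{-}\left( \alpha ,\mu -\nu -1,\nu +n\right) $, because there the exponent becomes $\left( \mu -\nu -1\right) +2\left( \nu +n\right) +2=2p+2n$ and the order and degree specialize to $-\mu -n$ and $\nu +n$.

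Next I would expand the two denominators against one another. Writing $\cosh ^{-2p}t=\sinh ^{-2p}t\,(1+\sinh ^{-2}t)^{-p}$ and using the binomial series gives
\[
\cosh ^{-2p}t=\sum_{n=0}^{\infty }\frac{\left( -1\right) ^{n}\left( p\right) _{n}}{n!}\sinh ^{-2p-2n}t,
\]
whereas $\sinh ^{-2p}t=\cosh ^{-2p}t\,(1-\cosh ^{-2}t)^{-p}$ gives
\[
\sinh ^{-2p}t=\sum_{n=0}^{\infty }\frac{\left( p\right) _{n}}{n!}\cosh ^{-2p-2n}t.
\]
Multiplying each by $\sinh ^{\mu -\nu -1}\left( t-\alpha \right) $ and integrating over $\left[ \alpha ,\infty \right) $ turns the first into $I_{3}^{+}=\sum_{n}\tfrac{\left( -1\right) ^{n}\left( p\right) _{n}}{n!}I_{3}^{-}\left( \alpha ,\mu -\nu -1,\nu +n\right) $ and the second into the companion relation with $I_{3}^{+}$ and $I_{3}^{-}$ interchanged and the signs dropped. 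Inserting \eqref{rep55}, \eqref{rep66}, cancelling the common factor $2^{\nu }\Gamma \left( \mu -\nu \right) $, using $\Gamma \left( 1+\nu +n\right) /\Gamma \left( 1+\nu \right) =\left( \nu +1\right) _{n}$ together with $\cosh \alpha =(1-x^{2})^{-1/2}$, $\sinh \alpha =x(1-x^{2})^{-1/2}$, collapses the $\alpha$-factors to $(1-x^{2})^{n/2}x^{-\nu -n-1}$ and $x^{\nu +1}(1-x^{2})^{n/2}$ respectively, reproducing \eqref{rel-aa} and \eqref{rel-ab} verbatim. The mutual inverseness is then automatic: substituting one binomial expansion into the other recovers the original, since $\left( p\right) _{n}\left( p+n\right) _{k-n}=\left( p\right) _{k}$ and $\sum_{n=0}^{k}\binom{k}{n}\left( -1\right) ^{n}=0$ for $k\geq 1$.

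The main obstacle is justifying the termwise integration and pinning down the domains. For \eqref{rel-ab} there is no difficulty, because $\cosh ^{-2}t\leq \cosh ^{-2}\alpha <1$ on $\left[ \alpha ,\infty \right) $ for every $\alpha >0$, so the $\sinh ^{-2p}$-expansion converges uniformly there and the resulting series carries the geometric factor $(1-x^{2})^{n}<1$, hence converges absolutely for all $x\in \left( 0,1\right) $. For \eqref{rel-aa} the $\cosh ^{-2p}$-expansion converges only where $\sinh t>1$; as $\sinh $ increases this holds throughout $\left[ \alpha ,\infty \right) $ exactly when $\sinh \alpha >1$, that is $x=\tanh \alpha >2^{-1/2}$, which is the stated interval. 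The delicate point is the endpoint $x=2^{-1/2}$ $\left( \sinh \alpha =1\right) $, where uniform convergence fails at $t=\alpha $; here I would estimate the $n$-th term directly from the large-degree asymptotics \eqref{as21}, which give $P_{\nu +n}^{-\mu -n}\left( 1/x\right) \sim \mathrm{const}\cdot \sinh ^{-\left( \nu +n\right) }\alpha /(2^{\mu +n}\Gamma \left( \mu +n+1\right) )$. Then the $n!$ in $\left( p\right) _{n}\left( \nu +1\right) _{n}/n!$ cancels against $\Gamma \left( \mu +n+1\right) $, and the term behaves like $\left( -1\right) ^{n}\bigl( (1-x^{2})/x^{2}\bigr) ^{n}n^{\left( 3\nu -\mu -1\right) /2}$; at $x=2^{-1/2}$ the geometric ratio equals $1$, so absolute convergence, and with it dominated termwise integration (or an Abel limit $x\downarrow 2^{-1/2}$), holds precisely when $\operatorname{Re}\left( 3\nu -\mu \right) <-1$, matching the hypothesis.

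Finally I would treat the polynomial cases. When $-p=k\in \mathbb{N}_{0}$ the factor $\left( p\right) _{n}=\left( -k\right) _{n}$ truncates and the $\cosh ^{-2p}$-expansion degenerates to the exact finite identity $\cosh ^{2k}t=(1+\sinh ^{2}t)^{k}$, valid for all $t$, so \eqref{rel-aa} holds as a finite sum for every $\alpha >0$, i.e. for all $x\in \left( 0,1\right) $. When $-\nu \in \mathbb{N}$ the factor $\left( \nu +1\right) _{n}$ truncates the series; both sides are then analytic in $x$ on $\left( 0,1\right) $ and agree on $\left( 2^{-1/2},1\right) $, so the identity theorem extends the equality to all $x\in \left( 0,1\right) $. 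The auxiliary restriction $\operatorname{Re}\mu >\operatorname{Re}\nu >-1$ required for convergence of the integrals is removed at the end by analytic continuation in $\mu $ and $\nu $, using that $P_{\nu }^{-\mu }$ is entire in both parameters, which yields the statement for arbitrary $\mu ,\nu \in \mathbb{C}$.
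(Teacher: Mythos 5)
Your proposal is correct and follows essentially the same route as the paper: it links $I_{3}^{+}\left( \alpha ,\sigma ,\nu \right) $ and $I_{3}^{-}\left( \alpha ,\sigma ,\nu +n\right) $ (with $\sigma =\mu -\nu -1$) through the binomial expansions of $\left( 1+\sinh ^{-2}t\right) ^{-p}$ and $\left( 1-\cosh ^{-2}t\right) ^{-p}$, integrates termwise, converts via \eqref{rep55}--\eqref{rep66}, and handles the endpoint $x=2^{-1/2}$ by the large-degree asymptotics \eqref{as21} and the remaining parameter ranges by truncation plus analytic continuation. Your write-up is in fact somewhat more explicit than the paper's at the endpoint estimate (where you correctly recover the exponent $\left( 3\nu -\mu -1\right) /2$ and hence the condition $\func{Re}\left( 3\nu -\mu \right) <-1$) and in verifying the mutual inverseness via $\sum_{n=0}^{k}\binom{k}{n}\left( -1\right) ^{n}=0$, which the paper leaves implicit.
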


\begin{proof}
Assuming temporary that $\nu >-1$, $\sigma >-1$, we write (\ref{rep5}) in
the form%
\begin{equation*}
I_{3}^{+}\left( \alpha ,\sigma ,\nu \right) =\int_{\alpha }^{\infty }\frac{%
\sinh ^{\sigma }\left( t-\alpha \right) }{\sinh ^{\sigma +2\nu +2}t}\left( 1+%
\frac{1}{\sinh ^{2}t}\right) ^{-\frac{\sigma }{2}-\nu -1}dt
\end{equation*}%
and expand $\left( 1+\sinh ^{-2}t\right) ^{\varkappa }$ into a binomial
series by supposing that $\sinh \alpha >1$. Then, on integrating term by
term 
\begin{equation*}
I_{3}^{+}\left( \alpha ,\sigma ,\nu \right) =\sum_{n=0}^{\infty }\frac{%
\left( -1\right) ^{n}\left( \frac{\sigma }{2}+\nu +1\right) _{n}}{n!}%
I_{3}^{-}\left( \alpha ,\sigma ,\nu +n\right) \text{,}
\end{equation*}%
and we arrive at (\ref{rel-aa}) as $x\in \left( 2^{-1/2},1\right) $ by
invoking (\ref{rep55}), (\ref{rep66}) and using the changes $\tanh \alpha =x$
and $\sigma +\nu +1=\mu $. Due analytical continuation the relation (\ref%
{rel-aa}) holds (by virtue of the asymptotic expansion (\ref{as21})) as $%
x\in \left( 2^{-1/2},1\right) $ for any $\mu ,\nu \in \mathbb{C}$ as well as
at the point $x=2^{-1/2}\ $if $\func{Re}\left( 3\nu -\mu \right) <-1$ . As $%
-\left( \mu +\nu +1\right) /2$ or $-\nu -1$ are non-negative integers, the
series in (\ref{rel-aa}) turns into a finite sum, and the relation is valid
for $x\in \left( 0,1\right) $ by virtue of holomorphic continuation because
both sides belong to $C^{\infty }(0,1)$.

In the same manner, (\ref{rel-ab}) can be proved by writing%
\begin{equation*}
I_{3}^{-}\left( \alpha ,\sigma ,\nu \right) =\int_{\alpha }^{\infty }\frac{%
\sinh ^{\sigma }\left( t-\alpha \right) }{\cosh ^{\sigma +2\nu +2}t}\left( 1-%
\frac{1}{\cosh ^{2}t}\right) ^{-\frac{\sigma }{2}-\nu -1}dt\text{,}
\end{equation*}%
and the proof is completed.
\end{proof}

As $\nu =k+\mu $, the above theorem give us inverse series relating
Gegenbauer polynomials of reciprocal arguments. As $\nu =-2k-\mu -1$, $k\in 
\mathbb{N}_{0}$ and $n=k-r$, we obtain the finite inverse sums for even
Gegenbauer polynomials of reciprocal arguments.

\begin{corollary}
For $\mu \in \mathbb{C}$, and $k\in \mathbb{N}_{0}$, there are the mutually
inverse sums 
\begin{eqnarray*}
\frac{\left( 2k\right) !C_{2k}^{\mu +1/2}\left( x\right) }{2^{2k}k!\left(
1-x^{2}\right) ^{k}} &=&\sum_{r=0}^{k}\frac{\left( -2k-\mu \right)
_{k-r}\left( \mu +\frac{1}{2}\right) _{k-r}}{\left( k-r\right) !}\frac{%
\left( 2r\right) !C_{2r}^{\mu +k-r+1/2}\left( \frac{1}{x}\right) }{%
2^{2r}r!\left( 1/x^{2}-1\right) ^{r}}\text{,} \\
\frac{\left( 2k\right) !C_{2k}^{\mu +1/2}\left( \frac{1}{x}\right) }{%
2^{2k}k!\left( 1-1/x^{2}\right) ^{k}} &=&\sum_{r=0}^{k}\frac{\left( -2k-\mu
\right) _{k-r}\left( \mu +\frac{1}{2}\right) _{k-r}}{\left( k-r\right) !}%
\frac{\left( 2r\right) !C_{2r}^{\mu +k-r+1/2}\left( x\right) }{%
2^{2r}r!\left( x^{2}-1\right) ^{r}}\text{.}
\end{eqnarray*}
\end{corollary}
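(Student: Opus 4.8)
The plan is to obtain both identities as the terminating specialization of the preceding theorem at $\nu=-2k-\mu-1$. For this value one has $\tfrac{\mu+\nu+1}{2}=-k$, so the factor $(\tfrac{\mu+\nu+1}{2})_n=(-k)_n$ kills every term with $n>k$ and the series in (\ref{rel-aa}) and (\ref{rel-ab}) collapse to finite sums over $0\le n\le k$; moreover $-\tfrac{\mu+\nu+1}{2}=k\in\mathbb{N}_0$, which is exactly the hypothesis under which the theorem guarantees validity on the whole interval $x\in(0,1)$. Writing $n=k-r$ turns these into sums over $r=0,\dots,k$, and since (\ref{rel-aa}) and (\ref{rel-ab}) are furnished by the theorem as a mutually inverse pair, their specializations are a mutually inverse pair of finite sums, so the inversion is inherited directly and no separate inversion computation is required.

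First I would remove the negative degrees with the reflection identity $P_\nu^{-\mu}(x)=P_{-1-\nu}^{-\mu}(x)$ of (\ref{1}). At $\nu=-2k-\mu-1$ the left-hand function becomes $P_{2k+\mu}^{-\mu}$, while each summand $P_{\nu+n}^{-\mu-n}$ with $n=k-r$ becomes $P_{2r+(\mu+k-r)}^{-(\mu+k-r)}$, i.e. a function of the canonical type $P_{K+M}^{-M}$ with $K=2r$ an even nonnegative integer. Applying (\ref{P-C}) to each of these replaces $P_{K+M}^{-M}(y)$ by a multiple of $|1-y^2|^{M/2}C_K^{M+1/2}(y)$, which is precisely what produces the even Gegenbauer polynomials $C_{2k}^{\mu+1/2}$ and $C_{2r}^{\mu+k-r+1/2}$ of the two reciprocal arguments. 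A convenient simplification to exploit is that the gamma factor $\Gamma(2M+K+1)$ from (\ref{P-C}) equals $\Gamma(2\mu+2k+1)$ for every $r$, hence is common to both sides and cancels, while $\Gamma(\mu+k-r+\tfrac12)/\Gamma(\mu+\tfrac12)=(\mu+\tfrac12)_{k-r}$ and $(\nu+1)_n=(-2k-\mu)_{k-r}$ deliver the two Pochhammer symbols appearing in the stated coefficient.

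The main obstacle is purely the bookkeeping of the remaining scalar factors, and it is where I would be most careful. Restricting to $x\in(0,1)$ lets the absolute values resolve with definite signs ($1-x^2>0$, $1/x^2-1>0$), after which the $(1-x^2)^{n/2}$ from the series coefficient must be combined with the $|1-x^2|^{\mu/2}$ and $|1-1/x^2|^{(\mu+k-r)/2}$ coming from (\ref{P-C}) and, for (\ref{rel-aa}), with the explicit weight $x^{\nu+n+1}$; one checks that these collapse to the factor $(1-x^2)^{-k}$ on the left and $(1/x^2-1)^{-r}$ on the right (respectively $(1-1/x^2)^{-k}$ and $(x^2-1)^{-r}$ for the companion identity). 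The residual powers of $2$ and the sign $(-1)^n$ hidden in (\ref{rel-aa}) are then reduced using $(-k)_{k-r}=(-1)^{k-r}k!/r!$, which collapses everything to the clean constants $(2k)!/(2^{2k}k!)$ and $(2r)!/(2^{2r}r!)$; for the second identity the same calculation runs with $x$ and $1/x$ interchanged, the only new feature being the factor $(-1)^k$ generated by $(1-1/x^2)^k=(-1)^k(1-x^2)^k/x^{2k}$, which is again absorbed by the $(-k)_{k-r}$ identity. I expect this sign-and-power accounting, rather than any conceptual difficulty, to be the delicate part.
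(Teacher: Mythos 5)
Your proposal is correct and is exactly the paper's route: the paper derives this corollary by setting $\nu=-2k-\mu-1$ and $n=k-r$ in Theorem~4, so that $(\tfrac{\mu+\nu+1}{2})_n=(-k)_n$ truncates the series, and then converting $P_{\nu+n}^{-\mu-n}=P_{2r+(\mu+k-r)}^{-(\mu+k-r)}$ into Gegenbauer polynomials via (\ref{P-C}). Your bookkeeping of the Pochhammer symbols, powers of $2$, $x$, and $(1-x^2)$, and of the signs absorbed by $(-k)_{k-r}=(-1)^{k-r}k!/r!$, checks out against the stated coefficients.
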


As $\nu =-k-1$ and $\mu =k-2m$, we obtain relations for the associated
Legendre polynomials of reciprocal arguments.

\begin{corollary}
For $x\in \left( -1,1\right) $ and $m,k\in \mathbb{N}_{0}$, $0\leq m\leq k$,
there are the mutually inverse sums%
\begin{equation*}
P_{k}^{2m-k}\left( x\right) =m!k!\sum_{n=0}^{m}\frac{\left( -2\right)
^{n}\left( 1-x^{2}\right) ^{\frac{n}{2}}}{\left( m-n\right) !\left(
k-n\right) !n!}x^{k-n}P_{k-n}^{2m-k-n}\left( \frac{1}{x}\right) \text{, }
\end{equation*}%
\begin{equation*}
x^{k}P_{k}^{2m-k}\left( \frac{1}{x}\right) =m!k!\sum_{n=0}^{m}\frac{%
2^{n}\left( 1-x^{2}\right) ^{\frac{n}{2}}}{\left( m-n\right) !\left(
k-n\right) !n!}P_{k-n}^{2m-k-n}\left( x\right) \text{.}
\end{equation*}%
\bigskip
\end{corollary}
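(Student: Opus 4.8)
The plan is to obtain both identities by specializing the mutually inverse series (\ref{rel-aa}) and (\ref{rel-ab}) of the preceding theorem at $\nu=-k-1$ and $\mu=k-2m$. First I would check that this choice lies in the regime where the theorem guarantees validity on the whole interval: since $-\nu=k+1\in\mathbb{N}$ and $-\frac{\mu+\nu+1}{2}=m\in\mathbb{N}_{0}$, each of the conditions in the theorem under which (\ref{rel-aa}) and (\ref{rel-ab}) hold for $x\in(0,1)$ is met, and moreover both power series collapse to finite sums. The inversion property of the resulting finite sums is inherited directly from the theorem, so no separate verification of mutual inversion is needed.

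Next I would carry out the elementary reduction of the coefficients. With the above substitution $\frac{\mu+\nu+1}{2}=-m$, so $\left(\frac{\mu+\nu+1}{2}\right)_{n}=(-m)_{n}$ and $(\nu+1)_{n}=(-k)_{n}$; these vanish for $n>m$ and for $n>k$ respectively, and since $m\le k$ the summation range is exactly $0\le n\le m$, matching the corollary. Using the terminating Pochhammer identities $(-m)_{n}=(-1)^{n}m!/(m-n)!$ and $(-k)_{n}=(-1)^{n}k!/(k-n)!$, the product $(-m)_{n}(-k)_{n}$ equals $m!\,k!/[(m-n)!(k-n)!]$ and the two signs cancel; together with the factor $(-2)^{n}$ arising from $(-2)^{-n}$ in the denominator, the factor $(1-x^{2})^{n/2}$, and the power $x^{\nu+n+1}=x^{n-k}$, this turns the coefficient of (\ref{rel-aa}) into precisely $m!k!(-2)^{n}(1-x^{2})^{n/2}x^{k-n}/[(m-n)!(k-n)!n!]$, and the analogous computation with $2^{-n}$ in place of $(-2)^{-n}$ handles (\ref{rel-ab}).

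It then remains only to rewrite the functions themselves. Invoking the reflection formula (\ref{1}), $P_{\nu}^{-\mu}=P_{-1-\nu}^{-\mu}$, I would replace $P_{-k-1}^{2m-k}$ by $P_{k}^{2m-k}$ on the left, and each $P_{-k-1+n}^{2m-k-n}$ by $P_{k-n}^{2m-k-n}$ in the sum, since $-1-(\nu+n)=k-n$; in (\ref{rel-ab}) the prefactor $x^{-(\nu+1)}=x^{k}$ produces the stated left-hand side $x^{k}P_{k}^{2m-k}(1/x)$. Assembling these pieces reproduces both displayed identities verbatim for $x\in(0,1)$.

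The one point requiring genuine care — and the only obstacle beyond bookkeeping — is the extension from $x\in(0,1)$ to $x\in(-1,1)$. Here I would argue by the parity and analyticity of the objects involved: for integer degree and order, each $P_{k-n}^{2m-k-n}$ is, up to a fixed power of $1-x^{2}$ (or $x^{2}-1$) and a power of its argument, a polynomial of definite parity, so that the identity, once established on an interval, persists throughout $(-1,1)$ by holomorphic continuation. Because these Ferrers and associated Legendre functions are entire in both parameters (as noted after (\ref{Hyper1})), no degeneracy of the normalizing gamma factors occurs even when the order $2m-k$ is negative, and the continuation is legitimate.
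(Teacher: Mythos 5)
Your proposal is correct and follows exactly the paper's route: the paper derives this corollary by the same substitution $\nu=-k-1$, $\mu=k-2m$ into (\ref{rel-aa}) and (\ref{rel-ab}), merely stating the substitution and leaving the Pochhammer bookkeeping and the reflection $P_{\nu}^{-\mu}=P_{-1-\nu}^{-\mu}$ implicit. Your additional care about passing from $x\in(0,1)$ to $x\in(-1,1)$ supplies a justification the paper omits here but endorses in its concluding remark on holomorphic continuation of the finite-sum identities.
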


Relations between $I_{1}^{+}\left( \alpha ,\mu ,\nu \right) $ and $%
I_{1}^{-}\left( \alpha ,\mu ,\nu \right) $ allow us to prove

\begin{theorem}
The mutually inverse series 
\begin{eqnarray}
\frac{P_{\nu }^{-\mu }\left( x\right) }{x^{\nu }} &=&\sum_{n=0}^{\infty
}\left( \mu -\nu \right) _{n}g_{n}\left( \nu \right) \left( \frac{1-x}{1+x}%
\right) ^{\frac{n}{2}}P_{\nu }^{-n-\mu }\left( \frac{1}{x}\right) \text{,}
\label{theor-a} \\
P_{\nu }^{-\mu }\left( \frac{1}{x}\right) &=&\sum_{n=0}^{\infty }\left( \mu
-\nu \right) _{n}g_{n}\left( -\nu \right) \left( \frac{1-x}{1+x}\right) ^{%
\frac{n}{2}}\frac{P_{\nu }^{-n-\mu }\left( x\right) }{x^{\nu }}\text{,}
\label{theor-b}
\end{eqnarray}%
are valid as $x\in \left( 0,1\right) $, $\mu $,$\nu \in \mathbb{C}$, and $%
g_{n}\left( \nu \right) $ are Mittag-Leffler polynomials.
\end{theorem}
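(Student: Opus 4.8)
The plan is to run the same scheme as for the $I_{3}^{\pm}$ relations, but with the Mittag-Leffler generating function (\ref{gen3}) supplying the coefficients. The pivot is the integral identity
\begin{equation*}
I_{1}^{+}\left( \alpha ,\mu ,\nu \right) =\sum_{n=0}^{\infty }g_{n}\left(
\nu \right) e^{-\alpha n}I_{1}^{-}\left( \alpha ,\mu +n,\nu \right) ,\qquad
\alpha >0,
\end{equation*}
together with its mirror image obtained by swapping $I_{1}^{+}\leftrightarrow
I_{1}^{-}$ and replacing $g_{n}\left( \nu \right) $ by $g_{n}\left( -\nu
\right) $. Once these are in hand, (\ref{theor-a}) and (\ref{theor-b})
follow mechanically from the closed forms (\ref{rep33}) and (\ref{rep11});
this paired provenance (forward versus backward expansion of the same ratio)
is what makes the two series mutually inverse.

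To obtain the pivot I would begin from the Euler-integral form of $I_{1}^{+}$
already displayed above, namely
\begin{equation*}
I_{1}^{+}\left( \alpha ,\mu ,\nu \right) =\frac{e^{-\left( \mu -\nu \right)
\alpha }}{2^{\nu }}\int_{0}^{1}s^{\mu -\nu -1}\left( 1-s\right) ^{\nu
}\left( 1+e^{-2\alpha }s\right) ^{\nu }\,ds,
\end{equation*}
and factor the last bracket as $\left( 1-e^{-2\alpha }s\right) ^{\nu }\bigl(
\left( 1+e^{-2\alpha }s\right) /\left( 1-e^{-2\alpha }s\right) \bigr) ^{\nu
}$. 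For $\alpha >0$ and $s\in \left[ 0,1\right] $ one has $e^{-2\alpha
}s\leq e^{-2\alpha }<1$, so (\ref{gen3}) with $z=e^{-2\alpha }s$, $\sigma
=\nu $ yields the uniformly convergent expansion $\sum_{n\geq 0}g_{n}\left(
\nu \right) \left( e^{-2\alpha }s\right) ^{n}$. Integrating term by term
(legitimate because of the geometric majorant $e^{-2\alpha n}$) and matching
each resulting integral with the representation of $I_{1}^{-}\left( \alpha
,\mu +n,\nu \right) $ leaves a net factor $e^{\alpha n}$ against the $
e^{-2\alpha n}$ of the series, which is exactly the pivot. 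Inserting (\ref
{rep33}) and (\ref{rep11}), cancelling $\Gamma \left( 1+\nu \right) \cosh
^{\nu }\alpha $, writing $\Gamma \left( \mu +n-\nu \right) /\Gamma \left(
\mu -\nu \right) =\left( \mu -\nu \right) _{n}$, and finally setting $
x=\tanh \alpha $ (so that $1/x=\coth \alpha $, $\tanh ^{\nu }\alpha =x^{\nu
}$, and $e^{-\alpha }=\sqrt{\left( 1-x\right) /\left( 1+x\right) }$) delivers
(\ref{theor-a}). Expanding instead $\left( 1-e^{-2\alpha }s\right) ^{\nu
}=\left( 1+e^{-2\alpha }s\right) ^{\nu }\sum_{n\geq 0}g_{n}\left( -\nu
\right) \left( e^{-2\alpha }s\right) ^{n}$ and repeating the computation
gives (\ref{theor-b}).

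These manipulations are licit only where the integrals converge, that is for
$\func{Re}\mu >\func{Re}\nu $ and $\func{Re}\nu >-1$, so the substantive part
of the argument is the extension to arbitrary $\mu ,\nu \in \mathbb{C}$ by
analytic continuation. For this I would estimate the general term of (\ref
{theor-a}) with the large-order asymptotics (\ref{as11}): since $P_{\nu
}^{-\mu -n}\left( 1/x\right) =P_{\nu }^{-\left( \mu +n\right) }\left( \coth
\alpha \right) $ and $\left( \mu -\nu \right) _{n}=\Gamma \left( \mu +n-\nu
\right) /\Gamma \left( \mu -\nu \right) $, the factorially growing Pochhammer
symbol cancels the $1/\Gamma \left( \mu +n-\nu \right) $ carried by the
asymptotics of the Legendre function, so that
\begin{equation*}
\left( \mu -\nu \right) _{n}P_{\nu }^{-\mu -n}\left( 1/x\right) =\frac{
e^{-\alpha \left( \mu +n\right) }}{\Gamma \left( \mu -\nu \right) \left( \mu
+n\right) ^{\nu +1}}\left( 1+O\left( 1/n\right) \right) .
\end{equation*}
Multiplying by $\left( \left( 1-x\right) /\left( 1+x\right) \right)
^{n/2}=e^{-\alpha n}$ and by the at-most-polynomially-growing $g_{n}\left(
\nu \right) $ shows the $n$-th term to be $O\left( e^{-2\alpha n}n^{C}\right)
$, whence the series converges absolutely and locally uniformly in $\left(
\mu ,\nu \right) $ for each fixed $x\in \left( 0,1\right) $ (where $\alpha >0$
forces $e^{-2\alpha }<1$); the same bound serves (\ref{theor-b}). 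As the
Ferrers and associated Legendre functions are entire in both parameters,
each side of (\ref{theor-a}) is then entire in $\left( \mu ,\nu \right) $,
and coincidence on the open set $\func{Re}\mu >\func{Re}\nu >-1$ forces
coincidence on all of $\mathbb{C}^{2}$. The step I expect to require the most
care is exactly this cancellation of factorial growth against factorial
decay: without it the Pochhammer prefactor would overwhelm the geometric
factor and the series would diverge, so the whole continuation hinges on
pairing $\left( \mu -\nu \right) _{n}$ with the asymptotics (\ref{as11})
before taking absolute values.
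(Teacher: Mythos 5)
Your proposal is correct and follows essentially the same route as the paper: establish the pivot identity $I_{1}^{+}\left( \alpha ,\mu ,\nu \right) =\sum_{n}g_{n}\left( \nu \right) e^{-\alpha n}I_{1}^{-}\left( \alpha ,\mu +n,\nu \right) $ via the Mittag-Leffler generating function (\ref{gen3}), translate through (\ref{rep11}) and (\ref{rep33}), and extend to all $\mu ,\nu \in \mathbb{C}$ by pairing $\left( \mu -\nu \right) _{n}$ with the asymptotics (\ref{as11}), (\ref{as12}) before continuing analytically. The only (cosmetic) difference is that you expand inside the Euler integral over $s\in \left[ 0,1\right] $, whereas the paper expands $\tanh ^{-\nu }\left( \left( t+\alpha \right) /2\right) $ inside the original integral over $\left( \alpha ,\infty \right) $ --- the same expansion after the change $e^{-t}=e^{-\alpha }s$.
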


\begin{proof}
Let $\func{Re}\mu >\func{Re}\nu >-1$ and $\alpha >0$. Rewrite $%
I_{1}^{+}\left( \alpha ,\mu ,\nu \right) $ by means of the identity $\left(
\sinh t-\sinh \alpha \right) ^{\nu }=\left( \cosh t-\cosh \alpha \right)
^{\nu }\tanh ^{-\nu }\left( \frac{t+\alpha }{2}\right) $ in the form%
\begin{equation*}
I_{1}^{+}\left( \alpha ,\mu ,\nu \right) =\int_{\alpha }^{\infty }\frac{%
e^{-t\mu }\tanh ^{-\nu }\left( \frac{t+\alpha }{2}\right) dt}{\left( \cosh
t-\cosh \alpha \right) ^{-\nu }}\text{.}
\end{equation*}%
Expanding $\tanh ^{-\nu }\left( \frac{t+\alpha }{2}\right) =\left(
1+e^{-\left( \alpha +t\right) }\right) ^{\nu }\ \left( 1-e^{-\left( \alpha
+t\right) }\right) ^{-\nu }$ into a series in powers of $e^{-t-\alpha }$
with coefficients expressed in the terms of Mittag-Leffler polynomials (see (%
\ref{gen3})), we obtain 
\begin{equation*}
I_{1}^{+}\left( \alpha ,\mu ,\nu \right) =\int_{\alpha }^{\infty }\left(
\sum_{n=0}^{\infty }g_{n}\left( \nu \right) \frac{e^{-\alpha n}e^{-t\left(
\mu +n\right) }}{\left( \cosh t-\cosh \alpha \right) ^{-\nu }}\right) dt%
\text{.}
\end{equation*}%
where power series converges absolutely and uniformly on the interval $t\geq
0$. Because terms of an absolutely and uniformly convergent power series are
uniformly bounded and $e^{-t\mu }\left( \cosh t-\cosh \alpha \right) ^{-\nu
}\in L\left( \alpha ,\infty \right) $, the dominated convergence theorem
allows us integration term by term leading to 
\begin{equation*}
I_{1}^{+}\left( \alpha ,\mu ,\nu \right) =\sum_{n=0}^{\infty }g_{n}\left(
\nu \right) e^{-\alpha n}I_{1}^{-}\left( \alpha ,\mu +n,\nu \right) \text{,}
\end{equation*}%
Hence, on employing (\ref{rep11}) and (\ref{rep33})%
\begin{equation}
\frac{P_{\nu }^{-\mu }\left( \tanh \alpha \right) }{\tanh ^{\nu }\alpha }%
=\sum_{n=0}^{\infty }\left( \mu -\nu \right) _{n}g_{n}\left( \nu \right)
e^{-\alpha n}P_{\nu }^{-n-\mu }\left( \coth \alpha \right) \text{,}
\label{theor1}
\end{equation}%
and the change $\tanh \alpha =x$ shows that (\ref{theor-a}) is valid as $%
\func{Re}\mu >\func{Re}\nu >-1$. \ The asymptotic formula (\ref{as11})
manifests that for any $\mu ,\nu \in \mathbb{C}$ and $\alpha >0$, $%
\left\vert P_{\nu }^{-n-\mu }\left( \coth \alpha \right) \left( \mu -\nu
\right) _{n}\right\vert \rightarrow 0$ as $n\rightarrow \infty $, and by
virtue of asymptotics of Mittag-Leffler polynomials the series in (\ref%
{theor1}) converges absolutely and uniformly. Note that the left side of (%
\ref{theor1}) is an analytic function of the parameters $\mu $ and $\nu $
while terms of the series in the right side also are analytic functions of $%
\mu $ and $\nu $. Then, (\ref{theor-a}) is proved by analytic continuation.

The inverse relation (\ref{theor-b}) can be established in the similar way
by using%
\begin{equation*}
I_{1}^{-}\left( \alpha ,\mu ,\nu \right) =\int_{\alpha }^{\infty }\frac{%
e^{-t\mu }\tanh ^{\nu }\left( \frac{t+\alpha }{2}\right) dt}{\left( \sinh
t-\sinh \alpha \right) ^{-\nu }}\text{,}
\end{equation*}%
(\ref{gen3}) and (\ref{as12}).
\end{proof}

As $\mu -\nu =-k$, $k\in \mathbb{N}_{0}$, and $\mu =\lambda -1/2$, (\ref%
{theor-a}) and (\ref{theor-b}) turn into new relations for Gegenbauer
polynomials which by making the change $n=k-m$ are written as follows.

\begin{corollary}
For $\lambda \in \mathbb{C}$, there are the mutually inverse sums%
\begin{eqnarray*}
\frac{C_{k}^{\lambda }\left( x\right) }{2^{k}\left( x-1\right) ^{k}}
&=&\sum_{m=0}^{k}\frac{g_{k-m}\left( k+\lambda -\frac{1}{2}\right) \left(
\lambda \right) _{k-m}}{\left( k+2\lambda \right) _{k-m}}\frac{%
C_{m}^{k-m+\lambda }\left( \frac{1}{x}\right) }{2^{m}\left( 1-\frac{1}{x}%
\right) ^{m}}\text{,} \\
\frac{C_{k}^{\lambda }\left( \frac{1}{x}\right) }{2^{k}\left( 1-\frac{1}{x}%
\right) ^{k}} &=&\sum_{m=0}^{k}\frac{g_{k-m}\left( \frac{1}{2}-k-\lambda
\right) \left( \lambda \right) _{k-m}}{\left( k+2\lambda \right) _{k-m}}%
\frac{C_{m}^{k-m+\lambda }\left( x\right) }{2^{m}\left( x-1\right) ^{m}}%
\text{.}
\end{eqnarray*}
\end{corollary}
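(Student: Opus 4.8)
The plan is to read the corollary off Theorem~7 by the parameter specialization announced just above it, namely $\mu=\lambda-\tfrac12$ and $\nu=k+\mu=k+\lambda-\tfrac12$, so that $\mu-\nu=-k$ with $k\in\mathbb{N}_{0}$. The first observation is that the Pochhammer factor multiplying every term of (\ref{theor-a}) and (\ref{theor-b}) becomes $(\mu-\nu)_{n}=(-k)_{n}$, which vanishes for $n>k$; hence each infinite series collapses to the finite sum $\sum_{n=0}^{k}$ and all convergence issues evaporate. This already accounts for the validity of the statement for every $\lambda\in\mathbb{C}$ with no restriction on $x$ beyond the common domain of the two sides, the identities being rational in $x$ after the stated specialization.

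Next I would convert every Ferrers function in (\ref{theor-a}) into a Gegenbauer polynomial by means of (\ref{P-C}). On the left, $P_{\nu}^{-\mu}$ has degree $\nu-\mu=k$ and parameter $\mu+\tfrac12=\lambda$, producing $C_{k}^{\lambda}(x)$. In the summand, $P_{\nu}^{-n-\mu}$ has degree $\nu-(n+\mu)=k-n$ and parameter $n+\mu+\tfrac12=n+\lambda$, producing $C_{k-n}^{n+\lambda}(1/x)$. The index change $n=k-m$ recommended in the text then turns this into $C_{m}^{k-m+\lambda}(1/x)$, exactly the polynomial appearing on the right-hand side of the claimed identity.

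Two tasks remain: reconciling the fractional powers and collapsing the constants. For the powers I would use $\tfrac{1-x}{1+x}=\tfrac{(1-x)^{2}}{1-x^{2}}$ and, for $0<x<1$, $\bigl|1-\tfrac{1}{x^{2}}\bigr|=\tfrac{1-x^{2}}{x^{2}}$; the factor $(1-x^{2})^{\mu/2}$ produced by (\ref{P-C}) on the left then cancels against the one assembled from $\bigl(\tfrac{1-x}{1+x}\bigr)^{n/2}$ and $|1-1/x^{2}|^{(n+\mu)/2}$ on the right, so that only integer powers of $x$ and of $1-x$ survive. Using $\nu=k+\mu$ these recombine, after clearing denominators, into the announced $2^{k}(x-1)^{k}$ and $2^{m}(1-1/x)^{m}$, the needed sign coming from $(1-x)^{k-m}=(-1)^{k-m}(x-1)^{k-m}$.

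The step I expect to be the real labour is the gamma-function bookkeeping that yields the coefficient $\dfrac{(\lambda)_{k-m}}{(k+2\lambda)_{k-m}}$. After the cancellations above, the constant attached to $C_{m}^{k-m+\lambda}(1/x)$ is $(-k)_{k-m}\,g_{k-m}(\nu)$ times the gamma ratio $\dfrac{\Gamma(k+2\lambda)\,\Gamma(k-m+\lambda)\,(k-n)!}{k!\,\Gamma(\lambda)\,\Gamma(2k-m+2\lambda)}$ coming from the two applications of (\ref{P-C}), where $(k-n)!=m!$. Evaluating $(-k)_{k-m}=(-1)^{k-m}k!/m!$, the factorials telescope and the two factors $(-1)^{k-m}$ cancel, leaving precisely $\Gamma(k-m+\lambda)\Gamma(k+2\lambda)/\bigl(\Gamma(\lambda)\Gamma(2k-m+2\lambda)\bigr)=(\lambda)_{k-m}/(k+2\lambda)_{k-m}$. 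The Mittag-Leffler factor is inert throughout: $g_{n}(\nu)=g_{k-m}(k+\lambda-\tfrac12)$ in (\ref{theor-a}). Finally, the second identity follows verbatim from (\ref{theor-b}) by the same computation with $x$ and $1/x$ interchanged, the only change being that $g_{n}(-\nu)=g_{k-m}(\tfrac12-k-\lambda)$ replaces $g_{n}(\nu)$.
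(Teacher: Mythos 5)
Your proposal is correct and follows exactly the route the paper intends: it proves the corollary by specializing Theorem 5 (equations (\ref{theor-a}), (\ref{theor-b})) at $\mu=\lambda-\tfrac12$, $\nu=k+\mu$, truncating via $(\mu-\nu)_n=(-k)_n$, converting each Ferrers/Legendre function to a Gegenbauer polynomial through (\ref{P-C}), and reindexing with $n=k-m$; the constant and power-of-$x$ bookkeeping you outline checks out. The paper itself gives only the one-sentence remark preceding the corollary, so your write-up is simply a fuller version of the same argument.
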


In a similar manner, relations between $I_{3}^{\pm }\left( \alpha ,\nu ,\mu
-\nu -1\right) $ and $I_{2}^{\mp }\left( \alpha ,\mu ,\nu \right) $ can be
established by using two pairs of identities%
\begin{eqnarray*}
\frac{\sinh ^{\nu }\left( t-\alpha \right) }{\cosh ^{2\mu -\nu }t} &=&\frac{%
2^{2\mu }e^{-2\mu t}}{\sinh ^{-\nu }\left( t-\alpha \right) \sinh ^{-\nu }t}%
\frac{\left( 1+e^{-2t}\right) ^{\nu -2\mu }}{\left( 1-e^{-2t}\right) ^{\nu }}%
\text{, } \\
\frac{e^{-2t\mu }\sinh ^{\nu }t}{\sinh ^{-\nu }\left( t-\alpha \right) } &=&%
\frac{\sinh ^{\nu }\left( t-\alpha \right) }{\cosh ^{\nu +2\left( \mu -\nu
-1\right) +2}t}\frac{\left( 1-\frac{1}{\cosh ^{2}t}\right) ^{\frac{\nu }{2}}%
}{\left( 1+\sqrt{1-\frac{1}{\cosh ^{2}t}}\right) ^{2\mu }}\text{, }t>0\text{;%
}
\end{eqnarray*}%
and%
\begin{eqnarray*}
\frac{\sinh ^{\nu }\left( t-\alpha \right) }{\sinh ^{2\mu -\nu }t} &=&\frac{%
2^{2\mu }e^{-2\mu t}}{\sinh ^{-\nu }\left( t-\alpha \right) \cosh ^{-\nu }t}%
\frac{\left( 1-e^{-2t}\right) ^{\nu -2\mu }}{\left( 1+e^{-2t}\right) ^{\nu }}%
\text{, } \\
\frac{e^{-2t\mu }\cosh ^{\nu }t}{\sinh ^{-\nu }\left( t-\alpha \right) } &=&%
\frac{\sinh ^{\nu }\left( t-\alpha \right) }{\sinh ^{\nu +2\left( \mu -\nu
-1\right) +2}t}\frac{\left( 1+\frac{1}{\sinh ^{2}t}\right) ^{\frac{\nu }{2}}%
}{\left( 1+\sqrt{1+\frac{1}{\sinh ^{2}t}}\right) ^{2\mu }}\text{, }t>0\text{;%
}
\end{eqnarray*}%
and invoking the generating functions (\ref{gen2}) and (\ref{gen5}). On
employing (\ref{rep5}), (\ref{rep6}), (\ref{rep2}) and (\ref{rep4}) we
derive the following result.

\begin{theorem}
There are two pairs of mutually inverse series: 1. For $x\in \left(
2^{-1/2},1\right) $, $x=2^{-1/2}$ as $\func{Re}\nu >-2/3$, $x\in \left(
0,1\right) $ as $\nu -\mu \in \mathbb{N}_{0}$,%
\begin{equation}
\frac{P_{\nu }^{-\mu }\left( x\right) }{\left( 1+x\right) ^{\mu }}%
=\sum_{n=0}^{\infty }\frac{\left( -1\right) ^{n}\left( \mu -\nu \right) _{n}%
\mathfrak{p}_{n}\left( -\frac{\nu }{2},2\mu ,1\right) }{2^{\mu -n}x^{n+\mu
-\nu }\left( 1-x^{2}\right) ^{-\frac{n}{2}}}P_{\nu -\mu -n}^{-\mu -n}\left( 
\frac{1}{x}\right) \text{, }  \label{I2- I3A}
\end{equation}%
and for $x\in \left( 0,1\right) $,%
\begin{equation}
\frac{P_{\nu -\mu }^{-\mu }\left( \frac{1}{x}\right) }{2^{\mu }x^{\mu -\nu }}%
=\sum_{n=0}^{\infty }\left( -1\right) ^{n}\left( \mu -\nu \right)
_{n}g_{n}\left( \nu ,-2\mu \right) \frac{\left( 1-x\right) ^{\frac{n}{2}%
}P_{\nu }^{-\mu -n}\left( x\right) }{\left( 1+x\right) ^{\frac{n}{2}+\mu }}%
\text{.}  \label{I2-I3B}
\end{equation}%
2. For $x\in \left( 0,1\right) $,%
\begin{eqnarray}
\frac{x^{\nu }P_{\nu }^{-\mu }\left( \frac{1}{x}\right) }{\left( 1+x\right)
^{\mu }} &=&\sum_{n=0}^{\infty }\frac{\left( \mu -\nu \right) _{n}\mathfrak{p%
}_{n}\left( -\frac{\nu }{2},2\mu ,1\right) }{2^{\mu -n}\left( 1-x^{2}\right)
^{-\frac{n}{2}}}P_{\nu -\mu -n}^{-n-\mu }\left( x\right) \text{,}  \label{AA}
\\
\frac{P_{\nu -\mu }^{-\mu }\left( x\right) }{2^{\mu }x^{\nu }}
&=&\sum_{n=0}^{\infty }\left( \mu -\nu \right) _{n}g_{n}\left( \nu ,-2\mu
\right) \frac{\left( 1-x\right) ^{\frac{n}{2}}P_{\nu }^{-n-\mu }\left( \frac{%
1}{x}\right) }{\left( 1+x\right) ^{\frac{n}{2}+\mu }}\text{.}  \label{AA0}
\end{eqnarray}
\end{theorem}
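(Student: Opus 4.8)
The plan is to follow the scheme of the proof of the preceding theorem (the derivation of (\ref{theor-a})), replacing the pair $(I_1^+,I_1^-)$ by the two pairs $(I_3^+,I_2^-)$ and $(I_3^-,I_2^+)$ that are linked by the two pairs of algebraic identities displayed just above. Throughout I first assume $\func{Re}\mu>\func{Re}\nu>-1$, so that all four integrals converge absolutely, and $\alpha>0$. Consider the first pair. Its first identity exhibits the integrand of $I_3^+(\alpha,\nu,\mu-\nu-1)$ as $2^{2\mu}$ times the integrand of $I_2^-(\alpha,\mu,\nu)$ multiplied by $(1+e^{-2t})^{\nu-2\mu}(1-e^{-2t})^{-\nu}$, which (\ref{gen2}) expands as a power series in $e^{-2t}$ with coefficients $g_n(\nu,-2\mu)$; its second identity exhibits the integrand of $I_2^-(\alpha,\mu,\nu)$ as the integrand of $I_3^+(\alpha,\nu,\mu-\nu-1)$ multiplied by a factor that (\ref{gen5}) expands as $2^{-2\mu}\sum_n\mathfrak{p}_n(-\tfrac{\nu}{2},2\mu,1)\cosh^{-2n}t$. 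Since $|e^{-2t}|<1$ and $\cosh^{-2}t<1$ for $t>\alpha>0$, both series converge on the whole range of integration. The second pair $(I_3^-,I_2^+)$ is handled identically; the only structural difference is that there the $\mathfrak{p}_n$-factor becomes a series in $-\sinh^{-2}t$, so that $|{-\sinh^{-2}t}|\le1$ forces $\sinh\alpha\ge1$, i.e. $\tanh\alpha\ge2^{-1/2}$, which is exactly the restriction appearing in part~1.

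Next I integrate term by term. As in the proof of (\ref{theor-a}), the terms of an absolutely and uniformly convergent power series are uniformly bounded and the base integrand lies in $L(\alpha,\infty)$, so dominated convergence legitimises the interchange and yields, for the first pair, $I_3^+(\alpha,\nu,\mu-\nu-1)=2^{2\mu}\sum_n g_n(\nu,-2\mu)I_2^-(\alpha,\mu+n,\nu)$ together with the reciprocal $I_2^-(\alpha,\mu,\nu)=2^{-2\mu}\sum_n\mathfrak{p}_n(-\tfrac{\nu}{2},2\mu,1)I_3^+(\alpha,\nu,\mu-\nu-1+n)$, and analogously for the second pair. Evaluating each integral by (\ref{rep55}), (\ref{rep66}), (\ref{rep222}), (\ref{rep444}), using the reflection $P_\lambda^{-\mu}=P_{-1-\lambda}^{-\mu}$ of (\ref{1}) to align the degrees, putting $\tanh\alpha=x$ and $\coth\alpha=1/x$, and absorbing the gamma prefactors, I obtain (\ref{AA}) and (\ref{AA0}) from the first pair and (\ref{I2- I3A}), (\ref{I2-I3B}) from the second pair. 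The signs $(-1)^n$ and the Pochhammer factors $(\mu-\nu)_n$ in the stated series arise here, from the $\Gamma(\mu+n-\nu)/\Gamma(\mu-\nu)$ ratios and the $e^{-\alpha(\mu+n)}$ weights produced by the conversion formulas, not from the generating functions themselves.

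It then remains to pass to arbitrary $\mu,\nu\in\mathbb{C}$ and to pin down the exact domains, the left sides and the individual terms being, respectively, analytic and entire in $\mu,\nu$. Each domain is governed by a single geometric factor. For (\ref{I2- I3A}) I would write its explicit factor as $x^{\nu-\mu}(\sqrt{1-x^2}/x)^n$, apply $P_\lambda=P_{-1-\lambda}$ to send the degree of $P_{\nu-\mu-n}^{-\mu-n}(1/x)$ to $+\infty$ with fixed shifted order $\sigma=\nu+1$, and invoke (\ref{as21}); since $\coth\alpha=1/x$ gives $\sinh\alpha=x/\sqrt{1-x^2}$, the resulting $\sinh^{-N}\alpha$ contributes a second factor $(\sqrt{1-x^2}/x)^n$, so the net ratio is $((1-x^2)/x^2)^n$ and the series converges exactly for $x>2^{-1/2}$. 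At $x=2^{-1/2}$ this factor is unimodular and, after the $n$-factorials and the factors $2^{\pm n}$ cancel, the general term behaves like $n^{-\frac{3\nu}{2}-2}$, whence absolute convergence holds iff $\func{Re}\nu>-2/3$; and when $\nu-\mu\in\mathbb{N}_0$ the factor $(\mu-\nu)_n$ truncates the series to a finite sum, valid on all of $(0,1)$ by holomorphy. The corresponding factors for (\ref{I2-I3B}), (\ref{AA}), (\ref{AA0}) come out as $((1-x)/(1+x))^n$ or $(1-x^2)^n$ (using (\ref{as11})--(\ref{as22})), all below $1$ on $(0,1)$, so those three series converge throughout $(0,1)$.

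I expect the hard part to be the boundary analysis at $x=2^{-1/2}$: it cannot be decided by the leading geometric estimate and requires the precise subexponential asymptotics (\ref{as-p}) of the Clausen polynomials $\mathfrak{p}_n$ balanced against the large-degree asymptotics (\ref{as21}) of the Legendre functions and the growth of $(\mu-\nu)_n$, the delicate point being that the factorials and the powers $2^{\pm n}$ must be shown to cancel before the surviving power of $n$ (here $-\tfrac{3\nu}{2}-2$) can be read off. By comparison, the dominated-convergence justification of the term-by-term integration uniformly in the complex parameters, and the gamma-function bookkeeping needed to reproduce the exact normalisations in (\ref{I2- I3A})--(\ref{AA0}), are laborious but routine.
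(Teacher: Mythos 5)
Your proposal is correct and follows essentially the same route the paper takes (and only sketches): the same two pairs of integrand identities linking $I_{3}^{\pm }\left( \alpha ,\nu ,\mu -\nu -1\right) $ with $I_{2}^{\mp }\left( \alpha ,\mu ,\nu \right) $, expanded via (\ref{gen2}) and (\ref{gen5}), integrated term by term, converted through (\ref{rep222})--(\ref{rep66}), and extended by analytic continuation using the asymptotics of Sections 2--3; in particular your boundary analysis at $x=2^{-1/2}$ correctly reproduces the condition $\func{Re}\nu >-2/3$. One minor slip in a side remark: the factors $\left( -1\right) ^{n}$ do originate in the generating functions — the expansion variables for the second pair are $-e^{-2t}$ and $-\sinh ^{-2}t$ — not in the gamma-ratio and exponential bookkeeping.
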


As $\nu =k+\mu $, $k\in \mathbb{N}_{0}$, the above theorem yields finite
sums relating Gegenbauer and Jacobi polynomials. As $\mu =\pm m$, $m\leq k$,
and $m\in \mathbb{N}_{0}$, some of them turn into relations for the
associated Legendre polynomials. We obtain from (\ref{I2-I3B}) and (\ref{AA0}%
)

\begin{corollary}
For $x\in \left( -1,1\right) $ there are two pairs of the mutually inverse
relations:%
\begin{eqnarray*}
\frac{P_{k}^{-m}\left( x\right) }{2^{m}k!x^{k+m}} &=&\sum_{n=0}^{k}\frac{%
\left( -1\right) ^{n}g_{n}\left( k+m,-2m\right) }{\left( k-n\right) !}\frac{%
\left( 1-x\right) ^{\frac{n}{2}}P_{k+m}^{-n-m}\left( \frac{1}{x}\right) }{%
\left( 1+x\right) ^{\frac{n}{2}+m}}\text{,} \\
\frac{P_{k}^{-m}\left( \frac{1}{x}\right) }{2^{m}k!x^{-k}} &=&\sum_{n=0}^{k}%
\frac{g_{n}\left( k+m,-2m\right) }{\left( k-n\right) !}\frac{\left(
1-x\right) ^{\frac{n}{2}}P_{k+m}^{-n-m}\left( x\right) }{\left( 1+x\right) ^{%
\frac{n}{2}+m}}\text{,}
\end{eqnarray*}%
and%
\begin{eqnarray*}
\frac{P_{k}^{m}\left( x\right) }{2^{-m}k!x^{k-m}} &=&\sum_{n=\max \left(
0,2m-k\right) }^{k}\frac{\left( -1\right) ^{n}g_{n}\left( k-m,2m\right) }{%
\left( k-n\right) !}\frac{\left( 1-x\right) ^{\frac{n}{2}}P_{k-m}^{m-n}%
\left( \frac{1}{x}\right) }{\left( 1+x\right) ^{\frac{n}{2}-m}}\text{, } \\
\frac{P_{k}^{m}\left( \frac{1}{x}\right) }{2^{-m}k!x^{-k}} &=&\sum_{n=\max
\left( 0,2m-k\right) }^{k}\frac{g_{n}\left( k-m,2m\right) }{\left(
k-n\right) !}\frac{\left( 1-x\right) ^{\frac{n}{2}}P_{k-m}^{m-n}\left(
x\right) }{\left( 1+x\right) ^{\frac{n}{2}-m}}\text{.}
\end{eqnarray*}
\end{corollary}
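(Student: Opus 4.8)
The plan is to deduce all four relations by specializing the two mutually inverse series (\ref{I2-I3B}) and (\ref{AA0}) of the preceding theorem, which hold for arbitrary $\mu,\nu\in\mathbb{C}$ and $x\in(0,1)$. First I would set $\nu=k+\mu$ with $k\in\mathbb{N}_{0}$. Then $\mu-\nu=-k$, so the Pochhammer factor $(\mu-\nu)_{n}=(-k)_{n}$ vanishes for $n>k$ and each infinite series collapses to a finite sum over $0\le n\le k$; moreover $\nu-\mu=k$, whence $P_{\nu-\mu}^{-\mu}=P_{k}^{-\mu}$ on the left-hand sides and $P_{\nu}^{-\mu-n}=P_{k+\mu}^{-\mu-n}$ on the right-hand sides.

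Next I would insert the elementary identity $(-k)_{n}=(-1)^{n}k!/(k-n)!$, valid for $0\le n\le k$, and then take $\mu=m$ for the first pair and $\mu=-m$ for the second, with $m\in\mathbb{N}_{0}$ and $0\le m\le k$. For $\mu=m$ the extra sign $(-1)^{n}$ carried by (\ref{I2-I3B}) cancels the one produced by $(-k)_{n}$, whereas in (\ref{AA0}) it survives; after dividing both sides by $k!$ and reading off the surviving factors $2^{m}$, $g_{n}(k+m,-2m)$ and the appropriate powers of $x$, relation (\ref{AA0}) gives the first displayed identity and (\ref{I2-I3B}) the second. Taking instead $\mu=-m$ turns the left-hand sides into $P_{k}^{m}$, replaces $g_{n}(k+m,-2m)$ by $g_{n}(k-m,2m)$, and produces on the right the associated Legendre polynomials $P_{k-m}^{m-n}$ together with the powers $2^{-m}$ and $(1+x)^{n/2-m}$.

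The one step that needs genuine care, and where the summation range changes, is the behaviour of these last polynomials. By (\ref{Pn}) the function $P_{k-m}^{m-n}$ has degree $k-m$ and order $m-n$, and it vanishes identically precisely when $m-n$ is a positive integer exceeding $k-m$, that is, when $n<2m-k$. Discarding these null terms raises the lower limit of summation from $0$ to $\max(0,2m-k)$ and yields the third and fourth identities; in the first pair no such cancellation occurs, since there $k-n\ge 0$ keeps $P_{k+m}^{-m-n}$ nonzero throughout $0\le n\le k$.

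Finally I would note that each identity so obtained is a finite relation between associated Legendre polynomials of the arguments $x$ and $1/x$, established initially on $(0,1)$. Using the parity $C_{\ell}^{\tau}(-x)=(-1)^{\ell}C_{\ell}^{\tau}(x)$ from (\ref{Ck}) together with (\ref{Pn}), one checks that both sides transform identically under $x\mapsto-x$, so the relations extend to the whole interval $x\in(-1,1)$ claimed in the statement. The remaining work is purely mechanical; the only real subtlety, which I expect to be the main obstacle, is the bookkeeping of the vanishing terms that fixes the lower limit $\max(0,2m-k)$ in the second pair.
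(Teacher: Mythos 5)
Your core derivation is exactly the paper's: the corollary is nothing more than (\ref{I2-I3B}) and (\ref{AA0}) specialized at $\nu=k+\mu$, so that $(\mu-\nu)_{n}=(-k)_{n}=(-1)^{n}k!/(k-n)!$ truncates the series at $n=k$, followed by $\mu=\pm m$; your sign bookkeeping, the pairing of (\ref{AA0}) with the first identity of each pair and (\ref{I2-I3B}) with the second, and the lower limit $\max(0,2m-k)$ coming from the vanishing of $P_{k-m}^{m-n}$ for $n<2m-k$ are all correct and are precisely what the paper does (it gives no further detail).

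The one step that does not work as you describe it is the passage from $(0,1)$ to $(-1,1)$. Under $x\mapsto -x$ the factor $(1-x)^{n/2}(1+x)^{-n/2-m}$ goes to $(1+x)^{n/2}(1-x)^{-n/2-m}$, which differs from the original by $\left(\frac{1+x}{1-x}\right)^{n+m}$, not by a sign; so the terms of the sum do \emph{not} transform identically under $x\mapsto-x$, and a term-by-term parity check fails. The workable route is to pull the factor $\left\vert 1-x^{2}\right\vert^{(n+m)/2}$ out of $P_{k+m}^{-n-m}(x)$ (resp.\ $P_{k-m}^{m-n}(x)$) via (\ref{Pn}): each term then becomes a common algebraic prefactor such as $(1-x)^{m/2}(1+x)^{-m/2}$ times a genuine polynomial in $x$, and two polynomials agreeing on $(0,1)$ agree everywhere --- this is the analytic continuation the paper alludes to in its closing remarks. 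Even so, for odd $m$ there remains a branch issue on $(-1,0)$ that neither you nor the paper addresses: writing $\left\vert 1-x^{-2}\right\vert^{m/2}=(1-x^{2})^{m/2}\left\vert x\right\vert^{-m}$ in $P_{k\pm m}^{\cdot}(1/x)$ introduces a factor $\left(\mathrm{sign}\,x\right)^{m}$ relative to the polynomial continuation (one can check with $k=m=1$ in the second identity that the two sides then differ by a sign on $(-1,0)$), so the claimed validity on all of $(-1,1)$ requires fixing the convention $x^{-m}$ rather than $\left\vert x\right\vert^{-m}$. Apart from this last point, which is as much the paper's gap as yours, the proposal is sound.
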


Another result for the associated Legendre polynomials follows from (\ref%
{I2- I3A}), (\ref{AA}) and (\ref{Pn}) as $\mu =-m$ and $\nu =k-m$.

\begin{corollary}
As $k/2<m\leq k$, $k.m\in \mathbb{N}_{0}$,\ and $x>0$, there is the
connection formula for the associated Legendre polynomials%
\begin{equation*}
\sum_{n=0}^{k}\frac{2^{n}\mathfrak{p}_{n}\left( \frac{m-k}{2},-2m,1\right) }{%
\left( k-n\right) !}\left( x-1\right) ^{n}\left\vert \frac{1+x}{1-x}%
\right\vert ^{\frac{n}{2}}P_{k-n}^{m-n}\left( x\right) =0\text{.}
\end{equation*}
\end{corollary}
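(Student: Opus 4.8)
The plan is to specialize the two mutually inverse series (\ref{I2- I3A}) and (\ref{AA}) at the parameter values $\mu=-m$, $\nu=k-m$ named in the statement, and then to read off a vanishing identity from the collapse of their left-hand sides. First I would record that with this choice $\mu-\nu=-k$, so that $\left(\mu-\nu\right)_{n}=\left(-k\right)_{n}=\left(-1\right)^{n}k!/\left(k-n\right)!$ vanishes for $n>k$; hence both infinite series truncate to finite sums over $0\le n\le k$. Moreover $\nu-\mu=k\in\mathbb{N}_{0}$, which is precisely the condition that makes (\ref{I2- I3A}) valid on all of $\left(0,1\right)$, while (\ref{AA}) already holds on $\left(0,1\right)$ without restriction. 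The order and degree on the right-hand sides simplify to $\nu-\mu-n=k-n$ and $-\mu-n=m-n$, so every summand carries $P_{k-n}^{m-n}$ (of argument $1/x$ in (\ref{I2- I3A}) and of argument $x$ in (\ref{AA})), and the first argument of $\mathfrak{p}_{n}$ becomes $-\nu/2=\frac{m-k}{2}$, giving $\mathfrak{p}_{n}\left(\frac{m-k}{2},-2m,1\right)$ as in the statement.

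The key observation is that the left-hand sides vanish. In both (\ref{I2- I3A}) and (\ref{AA}) the left member is a nonzero constant times $P_{k-m}^{m}$ (of argument $x$, respectively $1/x$). Since $k/2<m$ forces $k-2m<0$, formula (\ref{Pn}) writes this as a multiple of $\left\vert 1-x^{2}\right\vert^{m/2}C_{k-2m}^{m+1/2}$, and the Gegenbauer polynomial of negative index $C_{k-2m}^{m+1/2}$ is identically zero (independently of the argument, so the vanishing holds both in the Ferrers regime $\left(-1,1\right)$ and in the associated Legendre regime $\left(1,\infty\right)$). Thus $P_{k-m}^{m}\equiv0$, and each of the two specialized relations becomes the assertion that a finite sum equals $0$.

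It then remains to cast the coefficients into the advertised form. Using $\left(-k\right)_{n}=\left(-1\right)^{n}k!/\left(k-n\right)!$ and dividing out the nonzero constants $2^{m}$ and $k!$, the summand of (\ref{AA}) reduces to $\frac{2^{n}\mathfrak{p}_{n}\left(\frac{m-k}{2},-2m,1\right)}{\left(k-n\right)!}\left(-1\right)^{n}\left(1-x^{2}\right)^{n/2}P_{k-n}^{m-n}\left(x\right)$, valid for $x\in\left(0,1\right)$. In (\ref{I2- I3A}) the same reductions apply, except that the explicit $\left(-1\right)^{n}$ in the coefficient cancels the $\left(-1\right)^{n}$ from $\left(-k\right)_{n}$; passing to the natural argument $1/x\in\left(1,\infty\right)$, i.e. setting $t=1/x$ so that $x^{k-n}\left(1-x^{2}\right)^{n/2}=t^{-k}\left(t^{2}-1\right)^{n/2}$ and dividing out the harmless $t^{-k}$, one obtains $\frac{2^{n}\mathfrak{p}_{n}\left(\frac{m-k}{2},-2m,1\right)}{\left(k-n\right)!}\left(t^{2}-1\right)^{n/2}P_{k-n}^{m-n}\left(t\right)$, valid for $t>1$. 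Finally I would recognize that both surd factors coincide with $\left(x-1\right)^{n}\left\vert\frac{1+x}{1-x}\right\vert^{n/2}$: for $0<x<1$ this equals $\left(-1\right)^{n}\left(1-x^{2}\right)^{n/2}$, and for $x>1$ it equals $\left(x^{2}-1\right)^{n/2}$. The two ranges therefore fuse into the single identity asserted for $x>0$.

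The main obstacle I anticipate is not conceptual but lies in the bookkeeping: tracking the Pochhammer reductions and powers of $2$ faithfully, and above all handling the branch and sign factors so that the two intrinsically different expressions $\left(-1\right)^{n}\left(1-x^{2}\right)^{n/2}$ (from (\ref{AA}), on $0<x<1$) and $\left(x^{2}-1\right)^{n/2}$ (from (\ref{I2- I3A}), on $x>1$) merge cleanly into the common absolute-value form $\left(x-1\right)^{n}\left\vert\frac{1+x}{1-x}\right\vert^{n/2}$. The secondary point to verify is exactly the vanishing $P_{k-m}^{m}\equiv0$ via the negative-index Gegenbauer polynomial, which is what the hypothesis $k/2<m$ guarantees.
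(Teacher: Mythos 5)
Your proposal is correct and follows exactly the route the paper intends: specialize (\ref{I2- I3A}) and (\ref{AA}) at $\mu=-m$, $\nu=k-m$, note that $(\mu-\nu)_n=(-k)_n$ truncates the series while the left-hand sides vanish because $P_{k-m}^{m}\equiv 0$ for $k-m<m$ (i.e.\ $m>k/2$, via (\ref{Pn}) and the empty Gegenbauer sum), and then check that the two sign/surd normalizations on $(0,1)$ and $(1,\infty)$ both collapse to $(x-1)^{n}\left\vert\frac{1+x}{1-x}\right\vert^{n/2}$. The bookkeeping you flag as the main risk ($(-1)^n(-k)_n=k!/(k-n)!$, the powers of $2$, and the substitution $t=1/x$) all checks out.
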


The identities%
\begin{eqnarray*}
\frac{e^{-\mu t}}{\left( \cosh t-\cosh \alpha \right) ^{-\nu }} &=&\frac{%
e^{-\left( \mu +\nu \right) t}\cosh ^{\nu }t}{\sinh ^{-\nu }\left( t-\alpha
\right) }\frac{2^{\nu }e^{\alpha \nu }\left( 1-e^{-\alpha }e^{-t}\right)
^{\nu }}{\left( 1+e^{\alpha }e^{-t}\right) ^{\nu }\left( 1+e^{-2t}\right)
^{\nu }}\text{,} \\
\frac{e^{-\mu t}}{\left( \sinh t-\sinh \alpha \right) ^{-\nu }} &=&\frac{%
e^{-\left( \mu +\nu \right) t}\sinh ^{\nu }t}{\sinh ^{-\nu }\left( t-\alpha
\right) }\frac{2^{\nu }e^{\alpha \nu }\left( 1+e^{-\alpha }e^{-t}\right)
^{\nu }}{\left( 1+e^{\alpha }e^{-t}\right) ^{\nu }\left( 1-e^{-2t}\right)
^{\nu }}\text{,}
\end{eqnarray*}%
together with the integrals $I_{1}^{\pm }\left( \alpha ,\mu ,\nu \right) $, $%
I_{2}^{\mp }\left( \alpha ,\left( \mu +\nu \right) /2,\nu \right) ,$ and the
generation functions (\ref{5aa}) and (\ref{gen5bb}) allow us to prove

\begin{theorem}
For $x\in \left( 0,1\right) \mathbb{\ }$and $\mu ,\nu \in \mathbb{C}$ there
are two pairs of the mutually inverse relations:%
\begin{equation}
\frac{P_{\nu }^{-\mu }\left( \frac{1}{x}\right) }{\sqrt{\pi }2^{\nu -\mu
}x^{-\nu }}=\sum_{n=0}^{\infty }\frac{\left( \mu -\nu \right) _{n}\mathcal{G}%
_{n}\left( \nu ,\nu ,\sqrt{\frac{1-x}{1+x}}\right) }{2^{n}\Gamma \left( 
\frac{n+\mu -\nu +1}{2}\right) }\left( \frac{1-x}{1+x}\right) ^{\frac{n+\mu
-\nu }{4}}P_{\nu }^{-\frac{\mu +\nu +n}{2}}\left( x\right) \text{;}
\label{q1}
\end{equation}%
\begin{equation}
\left( \frac{1-x}{1+x}\right) ^{\frac{\mu -\nu }{4}}\frac{P_{\nu }^{-\frac{%
\mu +\nu }{2}}\left( x\right) }{\Gamma \left( \frac{\mu -\nu +1}{2}\right) }%
=\sum_{n=0}^{\infty }\frac{\left( \mu -\nu \right) _{n}\mathcal{G}_{n}\left(
-\nu ,-\nu ,\sqrt{\frac{1-x}{1+x}}\right) }{\sqrt{\pi }2^{\nu -\mu }x^{-\nu }%
}P_{\nu }^{-\mu -n}\left( \frac{1}{x}\right) \text{, }  \label{q2}
\end{equation}%
\begin{equation*}
\func{Re}\nu >-1\text{ or }\nu -\mu \in \mathbb{N}_{0}\text{;}
\end{equation*}%
and%
\begin{equation}
\frac{P_{\nu }^{-\mu }\left( x\right) }{\sqrt{\pi }2^{\nu -\mu }x^{\nu }}%
=\sum_{n=0}^{\infty }\frac{\left( \mu -\nu \right) _{n}\widehat{\mathcal{G}}%
_{n}\left( \nu ,\nu ,\sqrt{\frac{1-x}{1+x}}\right) }{2^{n}\Gamma \left( 
\frac{n+\mu -\nu +1}{2}\right) }\left( \frac{1-x}{1+x}\right) ^{\frac{n+\mu
-\nu }{4}}P_{\nu }^{-\frac{\mu +\nu +n}{2}}\left( \frac{1}{x}\right) \text{;}
\label{q3}
\end{equation}%
\begin{equation}
\left( \frac{1-x}{1+x}\right) ^{\frac{\mu -\nu }{4}}\frac{P_{\nu }^{-\frac{%
\mu +\nu }{2}}\left( \frac{1}{x}\right) }{\Gamma \left( \frac{\mu -\nu +1}{2}%
\right) }=\sum_{n=0}^{\infty }\frac{\left( \mu -\nu \right) _{n}\widehat{%
\mathcal{G}}_{n}\left( -\nu ,-\nu ,\sqrt{\frac{1-x}{1+x}}\right) }{\sqrt{\pi 
}2^{\nu -\mu }x^{\nu }}P_{\nu }^{-\mu -n}\left( x\right) \text{, }
\label{q4}
\end{equation}%
\begin{equation*}
\func{Re}\nu >-1\text{ or }\nu -\mu \in \mathbb{N}_{0}\text{.}
\end{equation*}
\end{theorem}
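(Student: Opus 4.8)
The plan is to derive all four series by the scheme already used twice in this section: rewrite one of the two displayed integrand identities, expand the remaining rational factor by one of the generating functions, and integrate term by term against the representations of Section 2. I would group (\ref{q1})--(\ref{q2}) around the first identity (the one built on $\cosh t-\cosh \alpha $) together with the series (\ref{5aa}), and (\ref{q3})--(\ref{q4}) around the second identity (built on $\sinh t-\sinh \alpha $) together with (\ref{gen5bb}); the two pairs are structurally identical, so it is enough to carry out the first in detail.

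For (\ref{q1}) I would temporarily assume $\func{Re}\mu >\func{Re}\nu >-1$ and $\alpha >0$, put $z=e^{-t}$ and $w=e^{-\alpha }$, and observe that in the first identity the last factor $2^{\nu }e^{\alpha \nu }(1-e^{-\alpha }z)^{\nu }(1+e^{\alpha }z)^{-\nu }(1+z^{2})^{-\nu }$ is precisely $2^{\nu }e^{\alpha \nu }$ times the left-hand side of (\ref{5aa}) with $\tau =\rho =\nu $; hence it expands as $2^{\nu }e^{\alpha \nu }\sum_{n}\mathcal{G}_{n}(\nu ,\nu ,e^{-\alpha })e^{-nt}$. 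Restoring the common factor $e^{-(\mu +\nu )t}\cosh ^{\nu }t/\sinh ^{-\nu }(t-\alpha )$ and integrating over $(\alpha ,\infty )$ turns the left member into $I_{1}^{-}(\alpha ,\mu ,\nu )$ and the $n$-th term on the right into $I_{2}^{+}(\alpha ,(\mu +\nu +n)/2,\nu )$. Invoking (\ref{rep11}) and (\ref{rep222}), cancelling $\Gamma (1+\nu )$, and setting $\tanh \alpha =x$ (so that $e^{-2\alpha }=(1-x)/(1+x)$, $\coth \alpha =1/x$, $\coth ^{\nu }\alpha =x^{-\nu }$) leaves a series in $\Gamma ((\mu -\nu +n)/2)\,P_{\nu }^{-(\mu +\nu +n)/2}(x)$. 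The single algebraic step is then the Legendre duplication formula in the form $\Gamma ((\mu -\nu +n)/2)=2^{1-\mu +\nu -n}\sqrt{\pi }\,(\mu -\nu )_{n}\Gamma (\mu -\nu )/\Gamma ((\mu -\nu +n+1)/2)$, which cancels the $\Gamma (\mu -\nu )$ produced by $I_{1}^{-}$ and simultaneously manufactures the Pochhammer symbol $(\mu -\nu )_{n}$ and the gamma $\Gamma ((n+\mu -\nu +1)/2)$ of (\ref{q1}); collecting the surviving powers of $2$ and the factor $e^{\alpha \nu }e^{-\alpha (\mu +\nu +n)/2}=((1-x)/(1+x))^{(n+\mu -\nu )/4}$ reproduces (\ref{q1}) verbatim.

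The inverse series (\ref{q2}) comes from the same identity read the other way: I would instead expand the reciprocal factor $(1-e^{-\alpha }z)^{-\nu }(1+e^{\alpha }z)^{\nu }(1+z^{2})^{\nu }$, which by (\ref{5aa}) equals $\sum_{n}\mathcal{G}_{n}(-\nu ,-\nu ,e^{-\alpha })e^{-nt}$, integrate the left member as $I_{2}^{+}(\alpha ,(\mu +\nu )/2,\nu )$ (giving $P_{\nu }^{-(\mu +\nu )/2}(x)$) and each right-hand term as $I_{1}^{-}(\alpha ,\mu +n,\nu )$ (giving $P_{\nu }^{-\mu -n}(1/x)$), and simplify as before. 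The two series of the pair are mutually inverse by construction, since the factors expanded in the two directions are reciprocal power series. The pair (\ref{q3})--(\ref{q4}) is produced identically from the second displayed identity, with $I_{1}^{+}$, $I_{2}^{-}$, (\ref{rep33}), (\ref{rep444}) in place of their counterparts and $\widehat{\mathcal{G}}_{n}(\pm \nu ,\pm \nu ,e^{-\alpha })$ from (\ref{gen5bb}) in place of $\mathcal{G}_{n}$.

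The real obstacle is analytic rather than algebraic. First, the term-by-term integration must be justified: as in the derivation of (\ref{theor-a}) I would bound the partial sums of the generating series by a fixed $L(\alpha ,\infty )$ majorant and apply dominated convergence, the only delicate point being the endpoint $t=\alpha $, where $e^{-nt}\to w^{n}$ and the estimate $\mathcal{G}_{n}(\nu ,\nu ,w)\sim 1/(n^{1-\nu }\Gamma (\nu )(-w)^{n})$ shows the series is only conditionally convergent unless $\func{Re}\nu $ is further restricted; hence I would first establish the identity on a smaller parameter region. Second, convergence of the output series for all $x\in (0,1)$ follows by combining that asymptotic for $\mathcal{G}_{n}$ with (\ref{as21})--(\ref{as22}) for $P_{\nu }^{-(\mu +\nu +n)/2}$. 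Finally, since each left-hand side and each individual term is entire in $\mu $ and $\nu $, analytic continuation removes the auxiliary restriction and yields the stated range $\func{Re}\nu >-1$; and when $\nu -\mu \in \mathbb{N}_{0}$ the factor $(\mu -\nu )_{n}$ truncates the series to a finite sum, so holomorphy in $x$ extends the identity to all of $(0,1)$.
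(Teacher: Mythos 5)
Your proposal follows exactly the route the paper intends: the paper offers no written proof of this theorem beyond the two integrand identities displayed just before it and the pointer to the generating functions (\ref{5aa}) and (\ref{gen5bb}) and the integrals $I_{1}^{\pm }$, $I_{2}^{\mp }\left( \alpha ,\left( \mu +\nu \right) /2,\nu \right) $, and your expansion, term-by-term integration, and Legendre-duplication bookkeeping fill in those details correctly (I verified the constants in (\ref{q1}) and (\ref{q2}) come out as stated). The one small slip is that the large-order control of $P_{\nu }^{-\left( \mu +\nu +n\right) /2}$ and $P_{\nu }^{-\mu -n}$ (fixed degree, growing order) should invoke (\ref{as11})--(\ref{as12}) rather than (\ref{as21})--(\ref{as22}), which govern the regime where degree and order grow together.
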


Setting $\nu =k+\mu $, $\left[ k/2\right] -\left[ n/2\right] =r$ in (\ref{q1}%
) and (\ref{q3}), as well as $\nu =2k+\mu $ in (\ref{q2}) and (\ref{q4}), on
denoting $\mu =\lambda -1/2$ we obtain by simple manipulations inverse sums
relating Gegenbauer polynomials.

\begin{corollary}
As $\lambda \in \mathbb{C}$, $k\in \mathbb{N}_{0}$, and $x\in \left(
-1,1\right) $, there are the mutually inverse sums 
\begin{equation*}
\frac{C_{k}^{\lambda }\left( \frac{1}{x}\right) }{\left( 1-x^{2}\right) ^{%
\frac{k}{2}}x^{-k}}=\sum_{r=0}^{\left[ \frac{k}{2}\right] }\frac{\left(
\lambda \right) _{k-r}\mathcal{G}_{k-2r}\left( k+\lambda -\frac{1}{2}%
,k+\lambda -\frac{1}{2},\sqrt{\frac{1-x}{1+x}}\right) }{\left( -2\right)
^{r+k}\left( 2\lambda +k\right) _{k-r}\left( 1-x\right) ^{r}}%
C_{r}^{k-r+\lambda }\left( x\right) \text{,}
\end{equation*}%
\begin{equation*}
\frac{C_{k}^{k+\lambda }\left( x\right) }{\left( 1-x\right) ^{k}}%
=\sum_{n=0}^{2k}\frac{\mathcal{G}_{2k-r}\left( \frac{1}{2}-2k-\lambda ,\frac{%
1}{2}-2k-\lambda ,\sqrt{\frac{1-x}{1+x}}\right) }{\left( -2\right) ^{r-k}%
\mathfrak{Y}_{\lambda }\left( k,r\right) \left( 1-x^{2}\right) ^{\frac{r}{2}%
}x^{-r}}C_{r}^{2k-r+\lambda }\left( \frac{1}{x}\right) \text{,}
\end{equation*}%
and%
\begin{eqnarray*}
\frac{C_{k}^{\lambda }\left( x\right) }{\left( 1-x^{2}\right) ^{\frac{k}{2}}}
&=&\sum_{r=0}^{\left[ \frac{k}{2}\right] }\frac{\left( \lambda \right) _{k-r}%
\widehat{\mathcal{G}}_{k-2r}\left( k+\lambda -\frac{1}{2},k+\lambda -\frac{1%
}{2},\sqrt{\frac{1-x}{1+x}}\right) }{\left( -2\right) ^{k+r}\Gamma \left(
2\lambda +k\right) _{k-r}\left( 1-x\right) ^{r}x^{-r}}C_{r}^{k-r+\lambda
}\left( \frac{1}{x}\right) \text{,} \\
\frac{C_{k}^{k+\lambda }\left( \frac{1}{x}\right) }{\left( 1-x\right)
^{k}x^{-k}} &=&\sum_{r=0}^{2k}\frac{\widehat{\mathcal{G}}_{n}\left( \frac{1}{%
2}-2k-\lambda ,\frac{1}{2}-2k-\lambda ,\sqrt{\frac{1-x}{1+x}}\right) }{%
\left( -2\right) ^{r-k}\mathfrak{Y}_{\lambda }\left( k,r\right) \left(
1-x^{2}\right) ^{\frac{r}{2}}}C_{r}^{2k-r+\lambda }\left( x\right) \text{,}
\end{eqnarray*}%
where 
\begin{equation*}
\mathfrak{Y}_{\lambda }\left( k,r\right) =\frac{\Gamma \left( \lambda
+k\right) \Gamma \left( 2\lambda +4k-r\right) }{\Gamma \left( \lambda
+2k-r\right) \Gamma \left( 2\lambda +3k\right) }\text{.}
\end{equation*}
\end{corollary}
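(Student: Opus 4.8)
The plan is to obtain all four identities as specializations of the master relations (\ref{q1})--(\ref{q4}) of the preceding theorem, converting every Ferrers function that occurs into a Gegenbauer polynomial by means of (\ref{P-C}). For the first and third identities I would set $\nu=k+\mu$ in (\ref{q1}) and (\ref{q3}); for the second and fourth I would set $\nu=2k+\mu$ in (\ref{q2}) and (\ref{q4}); in every case I would finally write $\mu=\lambda-\tfrac12$, so that the Gegenbauer upper parameter $\mu+\tfrac12=\lambda$ appears. Since then $\nu-\mu=k$ or $\nu-\mu=2k\in\mathbb{N}_0$, the hypothesis $\nu-\mu\in\mathbb{N}_0$ of the theorem is met and each specialization is legitimate on $x\in(0,1)$.

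The essential point for (\ref{q1}) and (\ref{q3}) is a parity collapse. With $\nu=k+\mu$ the prefactor $(\mu-\nu)_n=(-k)_n$ truncates the series at $n=k$, while the denominator factor $1/\Gamma\bigl((n+\mu-\nu+1)/2\bigr)=1/\Gamma\bigl((n-k+1)/2\bigr)$ vanishes at every $n<k$ of parity opposite to $k$, because $\Gamma$ has poles where $(n-k+1)/2=0,-1,-2,\dots$. Hence only the terms $n=k-2r$, $r=0,1,\dots,[k/2]$, survive, which is exactly the reindexing $r=[k/2]-[n/2]$ indicated in the text. For such $n$ the right-hand Ferrers function $P_{\nu}^{-(\mu+\nu+n)/2}$ has order $(\mu+\nu+n)/2=\mu+k-r$ and degree $\nu=k+\mu$, i.e. it is of the form $P_{r+M}^{-M}$ with $M=\mu+k-r$; by (\ref{P-C}) it becomes a multiple of $C_r^{\,k-r+\lambda}$, while the left-hand $P_{k+\mu}^{-\mu}$ becomes a multiple of $C_k^{\lambda}$. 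This produces the first and third corollary identities, the surviving index $k-2r$ being precisely the argument of $\mathcal{G}$ and $\widehat{\mathcal{G}}$.

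For (\ref{q2}) and (\ref{q4}) with $\nu=2k+\mu$ no cancellation occurs: the prefactor $(\mu-\nu)_n=(-2k)_n$ truncates at $n=2k$, and for every $0\le n\le 2k$ the Ferrers function $P_{\nu}^{-\mu-n}=P_{2k+\mu}^{-(\mu+n)}$ is of the form $P_{(2k-n)+M}^{-M}$ with $M=\mu+n$, hence by (\ref{P-C}) a multiple of the genuine polynomial $C_{2k-n}^{\,n+\lambda}$. Reindexing by $r=2k-n$ yields the full sum $\sum_{r=0}^{2k}$ with $\mathcal{G}_{2k-r}$ (resp. $\widehat{\mathcal{G}}_{2k-r}$) and first arguments $-\nu=\tfrac12-2k-\lambda$, while the left-hand $P_{2k+\mu}^{-(k+\mu)}$ becomes a multiple of $C_k^{\,k+\lambda}$.

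What remains is bookkeeping. After the substitutions I would collect the powers of $2$, of $x$, and of $1-x^2$ furnished by (\ref{P-C}) and by the explicit prefactors in (\ref{q1})--(\ref{q4}), and reduce the accompanying quotients of gamma functions and Pochhammer symbols to the stated coefficients $(\lambda)_{k-r}/\bigl[(-2)^{r+k}(2\lambda+k)_{k-r}\bigr]$ and to the factor $\mathfrak{Y}_{\lambda}(k,r)$, the Legendre duplication formula being the main tool, exactly as in the gamma-function estimates earlier in this section. I expect this reduction---together with keeping the half-integer powers $\bigl((1-x)/(1+x)\bigr)^{(n+\mu-\nu)/4}$ and the arguments of $\mathcal{G}$, $\widehat{\mathcal{G}}$ consistent through the reindexing---to be the only real labor; there is no conceptual obstacle beyond the parity argument above. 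Finally, the identities hold first on $x\in(0,1)$; since both sides extend to functions analytic in $x$ on $(-1,1)$ (the apparent singularity at $x=0$ being removable, and $1-x^2>0$, $(1-x)/(1+x)>0$ there), they hold throughout $(-1,1)$ by analytic continuation.
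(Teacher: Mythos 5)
Your proposal follows essentially the same route as the paper, which simply states that the corollary is obtained by setting $\nu=k+\mu$ with $r=[k/2]-[n/2]$ in (\ref{q1}) and (\ref{q3}), $\nu=2k+\mu$ in (\ref{q2}) and (\ref{q4}), and $\mu=\lambda-\tfrac{1}{2}$, followed by simple manipulations via (\ref{P-C}). Your explicit justification of the parity collapse through the poles of $\Gamma\bigl((n-k+1)/2\bigr)$ and the truncation via $(-k)_n$ correctly fills in what the paper leaves implicit, so the argument is sound.
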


Two extra relations arise by setting $\mu =\lambda -1/2$ and $\nu =2k+1+\mu $%
, $k\in \mathbb{N}_{0}$, in (\ref{q2}) and (\ref{q4}).

\begin{corollary}
The connecting formulas%
\begin{eqnarray}
\sum_{n=0}^{2k+1}\frac{\left( \lambda \right) _{n}\mathcal{G}_{n}\left(
-2k-\lambda -\frac{1}{2},-2k-\lambda -\frac{1}{2},\sqrt{\frac{1-x}{1+x}}%
\right) }{\left( 2\lambda +2k+1\right) _{n}\left( -2\right) ^{-n}\left(
1-x^{2}\right) ^{-\frac{n}{2}}x^{n}}C_{2k-n+1}^{\lambda +n}\left( \frac{1}{x}%
\right) &=&0\text{,}  \label{GG1} \\
\sum_{n=0}^{2k+1}\frac{\left( \lambda \right) _{n}\widehat{\mathcal{G}}%
_{n}\left( -2k-\lambda -\frac{1}{2},-2k-\lambda -\frac{1}{2},\sqrt{\frac{1-x%
}{1+x}}\right) }{\left( \lambda +2k+1\right) _{n}\left( -2\right)
^{-n}\left( 1-x^{2}\right) ^{-\frac{n}{2}}}C_{2k-n+1}^{\lambda +n}\left(
x\right) &=&0\text{,}  \label{GG2}
\end{eqnarray}%
are valid for $x\in \left( 0,1\right) $ and $\lambda \in \mathbb{C}$.
\end{corollary}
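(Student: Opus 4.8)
The plan is to obtain (\ref{GG1}) and (\ref{GG2}) as degenerate specializations of the mutually inverse relations (\ref{q2}) and (\ref{q4}). Following the instruction preceding the statement, I set $\mu=\lambda-\tfrac12$ and $\nu=2k+1+\mu=2k+\lambda+\tfrac12$. Then $\nu-\mu=2k+1\in\mathbb{N}$, so the hypothesis $\nu-\mu\in\mathbb{N}_0$ of the source theorem holds; moreover the Pochhammer factor $(\mu-\nu)_n=(-(2k+1))_n$ vanishes for $n>2k+1$, so each series in (\ref{q2}) and (\ref{q4}) is already the finite sum $\sum_{n=0}^{2k+1}$ that appears in (\ref{GG1}) and (\ref{GG2}). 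The arguments of $\mathcal{G}_n$ and $\widehat{\mathcal{G}}_n$ become $-\nu=-2k-\lambda-\tfrac12$, matching the statement, and no convergence or continuation question arises because every term is entire in the parameters.

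The crux is that the left-hand sides of (\ref{q2}) and (\ref{q4}) vanish identically at these values. For the chosen $\mu,\nu$ one has $\tfrac{\mu-\nu+1}{2}=-k$, a pole of the Gamma function, so the prefactor $1/\Gamma\!\left(\tfrac{\mu-\nu+1}{2}\right)=1/\Gamma(-k)=0$. Since $\left(\tfrac{1-x}{1+x}\right)^{(\mu-\nu)/4}$ is finite and nonzero on $(0,1)$ and, as noted after (\ref{Hyper1}), $P_\nu^{-(\mu+\nu)/2}(x)$ is an entire function of its parameters and hence finite, the whole left side is $0$ with no indeterminacy (the residual factor need not even be evaluated). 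Thus (\ref{q2}) and (\ref{q4}) collapse to the assertion that the corresponding finite sums equal zero.

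It then remains to convert the Ferrers functions on the right into Gegenbauer polynomials. For $0\le n\le 2k+1$ one has $\nu-(\mu+n)=2k+1-n\in\mathbb{N}_0$, so (\ref{P-C}) applies with $\mu+n+\tfrac12=\lambda+n$ and expresses $P_\nu^{-\mu-n}(1/x)$ (in (\ref{q2})) and $P_\nu^{-\mu-n}(x)$ (in (\ref{q4})) as constant multiples of $\left|1-1/x^2\right|^{(\mu+n)/2}C_{2k+1-n}^{\lambda+n}(1/x)$ and $\left|1-x^2\right|^{(\mu+n)/2}C_{2k+1-n}^{\lambda+n}(x)$, respectively. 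Collecting the resulting powers of $1-x^2$, of $x$, and of $(1-x)/(1+x)$, and dividing out the common nonzero prefactor, turns the two vanishing sums into (\ref{GG1}) and (\ref{GG2}), with the unhatted $\mathcal{G}_n$ coming from (\ref{q2}) and the hatted $\widehat{\mathcal{G}}_n$ from (\ref{q4}).

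The main obstacle is purely the Gamma-function bookkeeping needed to reduce the accumulated constants to the stated form: the ratio $\Gamma\!\left(2(\mu+n)+(2k+1-n)+1\right)/\Gamma(2\mu+2k+2)$ arising from (\ref{P-C}), the factor $\Gamma(\mu+n+\tfrac12)=\Gamma(\lambda+n)$, and the prefactors $2^{\nu-\mu}x^{\pm\nu}$ in (\ref{q2}) and (\ref{q4}), all of which must be combined with the powers $\left|1-1/x^2\right|^{(\mu+n)/2}$ or $\left|1-x^2\right|^{(\mu+n)/2}$ to produce the exact Pochhammer symbols and powers of $x$ and $1-x^2$ in (\ref{GG1}) and (\ref{GG2}); in particular the factor $\left|1-1/x^2\right|^{(\mu+n)/2}$ contributes the extra $x^{-(\mu+n)}$ that accounts for the $x^{n}$ present in (\ref{GG1}) but absent in (\ref{GG2}). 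This is routine but must be carried out carefully, as it is the only place where the identity's precise coefficients are fixed.
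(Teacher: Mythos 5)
Your proposal is correct and follows exactly the route the paper intends: the paper's entire justification is the one-line instruction to set $\mu=\lambda-\tfrac12$, $\nu=2k+1+\mu$ in (\ref{q2}) and (\ref{q4}), and you correctly supply the three ingredients that make this work (the left sides vanish because of the pole of $\Gamma\!\left(\frac{\mu-\nu+1}{2}\right)=\Gamma(-k)$, the series truncate because $(\mu-\nu)_n=(-(2k+1))_n$ vanishes for $n>2k+1$, and (\ref{P-C}) converts $P_{\nu}^{-\mu-n}$ into $C_{2k+1-n}^{\lambda+n}$, with $\left\vert 1-1/x^{2}\right\vert^{(\mu+n)/2}$ producing the extra $x^{n}$ in (\ref{GG1})). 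Carrying the bookkeeping through in fact yields $(2\lambda+2k+1)_n$ in both denominators, so the $(\lambda+2k+1)_n$ printed in (\ref{GG2}) appears to be a typo in the paper rather than a defect of your argument.
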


Additional inverse series can be found from two pairs of identities allowing
us to establish relations between $I_{3}^{\pm }\left( \alpha ,\nu ,\left(
\mu -\nu -2\right) /2\right) $ and $I_{1}^{\mp }\left( \alpha ,\mu ,\nu
\right) $: 
\begin{eqnarray*}
\frac{e^{-t\mu }}{\left( \cosh t-\cosh \alpha \right) ^{-\nu }} &=&\frac{%
\sinh ^{\nu }\left( t-\alpha \right) }{\cosh ^{\nu +2\frac{\mu -\nu -2}{2}%
+2}t}\frac{\left( 1+\coth \alpha \sqrt{1-\frac{1}{\cosh ^{2}t}}\right) ^{\nu
}\sinh ^{\nu }\alpha }{\left( 1+\sqrt{1-\frac{1}{\cosh ^{2}t}}\right) ^{\mu
}\left( 1+\frac{\cosh \alpha }{\cosh t}\right) ^{\nu }}\text{,} \\
\frac{\sinh ^{^{\nu }}\left( t-\alpha \right) }{\cosh ^{\nu +2\frac{\mu -\nu
-2}{2}+2}t} &=&\frac{e^{-\mu t}}{\left( \cosh t-\cosh \alpha \right) ^{-\nu }%
}\frac{2^{\mu }}{e^{\nu \alpha }}\frac{\left( 1+e^{\alpha }e^{-t}\right)
^{\nu }\left( 1-e^{-\alpha }e^{-t}\right) ^{-\nu }}{\left( 1+e^{-2t}\right)
^{\mu }}\text{;}
\end{eqnarray*}%
and%
\begin{eqnarray*}
\frac{e^{-t\mu }}{\left( \sinh t-\sinh \alpha \right) ^{-\nu }} &=&\frac{%
\sinh ^{\nu }\left( t-\alpha \right) }{\sinh ^{\nu +2\frac{\mu -\nu -2}{2}%
+2}t}\frac{\left( 1+\tanh \alpha \sqrt{1+\frac{1}{\sinh ^{2}t}}\right) ^{\nu
}\cosh ^{-\nu }\alpha }{\left( 1+\sqrt{1+\frac{1}{\sinh ^{2}t}}\right) ^{\mu
}\left( 1+\frac{\sinh \alpha }{\sinh t}\right) ^{\nu }}\text{,} \\
\frac{\sinh ^{\nu }\left( t-\alpha \right) }{\sinh ^{\nu +2\frac{\mu -\nu -2%
}{2}+2}t} &=&\frac{e^{-\mu t}}{\left( \sinh t-\sinh \alpha \right) ^{-\nu }}%
\frac{2^{\mu }}{e^{\nu \alpha }}\frac{\left( 1+e^{\alpha }e^{-t}\right)
^{\nu }\left( 1+e^{-\alpha }e^{-t}\right) ^{-\nu }}{\left( 1-e^{-2t}\right)
^{\mu }}\text{.}
\end{eqnarray*}%
Then, the usage of the generation functions (\ref{Final-gen}), (\ref{5aa}),
and (\ref{gen5bb}) as well as analytical continuation lead to

\begin{theorem}
Let $\mu ,\nu \in \mathbb{C}$. Then, two pairs of the mutually inverse
series are valid: 1. For $0<x<1$, $\func{Re}\nu >-1$ or $\nu -\mu \in 
\mathbb{N}_{0}$, 
\begin{equation}
\frac{\left( 1-x^{2}\right) ^{\frac{\mu +\nu }{4}}P_{\frac{\mu -\nu -2}{2}%
}^{-\frac{\mu +\nu }{2}}\left( x\right) }{2^{\frac{3\mu -\nu }{2}}x^{^{\nu
}}\Gamma \left( \frac{\mu -\nu +1}{2}\right) }=\sum_{n=0}^{\infty }\frac{%
\left( \mu -\nu \right) _{n}\mathcal{G}_{n}\left( -\nu ,\mu ,\sqrt{\frac{1-x%
}{1+x}}\right) }{\sqrt{\pi }\left( \frac{1-x}{1+x}\right) ^{-\frac{\nu }{2}}}%
P_{\nu }^{-\mu -n}\left( \frac{1}{x}\right) \text{,}  \label{G1}
\end{equation}%
and for $0<x<1$,%
\begin{equation}
\frac{P_{\nu }^{-\mu }\left( \frac{1}{x}\right) }{\sqrt{\pi }}%
=\sum_{n=0}^{\infty }\left( \mu -\nu \right) _{n}\mathfrak{N}_{n,\nu ,\mu
}^{-}\left( x\right) \frac{\left( 1-x^{2}\right) ^{\frac{\mu -\nu +n}{4}}P_{%
\frac{\mu -\nu -2+n}{2}}^{-\frac{\mu +\nu +n}{2}}\left( x\right) }{2^{\frac{%
3\mu -\nu +n}{2}}\Gamma \left( \frac{\mu -\nu +n+1}{2}\right) }\text{.}
\label{R1}
\end{equation}%
2. For $x\in \left( 2^{-1/2},1\right) $, $x=2^{-1/2}$ as $\func{Re}\nu <2$, $%
x\in \left( 0,1\right) $ as $\nu -\mu \in \mathbb{N}_{0}$, 
\begin{equation}
\frac{P_{\nu }^{-\mu }\left( x\right) }{\sqrt{\pi }}=\sum_{n=0}^{\infty
}\left( \mu -\nu \right) _{n}\mathfrak{N}_{n,\nu ,\mu }^{+}\left( x\right) 
\frac{\left( 1-x^{2}\right) ^{\frac{\mu -\nu +n}{4}}P_{\frac{\mu -\nu +n-2}{2%
}}^{-\frac{\mu +\nu +n}{2}}\left( \frac{1}{x}\right) }{2^{\frac{3\mu -\nu +n%
}{2}}\Gamma \left( \frac{\mu -\nu +n+1}{2}\right) x^{\frac{\mu -\nu +n}{2}}}%
\text{, }  \label{R2}
\end{equation}%
and for $\ 0<x<1$, $\func{Re}\nu >-1$,%
\begin{equation}
\frac{\left( 1-x^{2}\right) ^{\frac{\mu +\nu }{4}}P_{\frac{\mu -\nu -2}{2}%
}^{-\frac{\mu +\nu }{2}}\left( \frac{1}{x}\right) }{2^{\frac{3\mu -\nu }{2}%
}x^{\frac{\mu -\nu }{2}}\Gamma \left( \frac{\mu -\nu +1}{2}\right) }%
=\sum_{n=0}^{\infty }\frac{\left( \mu -\nu \right) _{n}\widehat{\mathcal{G}}%
_{n}\left( -\nu ,\mu ,\sqrt{\frac{1-x}{1+x}}\right) }{\sqrt{\pi }\left( 
\frac{1-x}{1+x}\right) ^{-\frac{\nu }{2}}}P_{\nu }^{-\mu -n}\left( x\right) 
\text{.}  \label{G2}
\end{equation}
\end{theorem}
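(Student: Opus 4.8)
The plan is to follow the scheme already used for Theorems 7, 9 and 11. For each of the four target relations I would substitute the appropriate one of the four displayed identities into the integral whose integrand sits on its left, expand the remaining factor into a power series by the indicated generating function, integrate term by term, and read off the relation through the closed forms (\ref{rep11}), (\ref{rep33}), (\ref{rep55}), (\ref{rep66}). The relations split into a $\cosh$-pair producing the mutually inverse (\ref{G1}) and (\ref{R1}), and a $\sinh$-pair producing (\ref{R2}) and (\ref{G2}); within each pair one identity gives the forward expansion (coefficients $\mathfrak{N}_{n,\nu,\mu}^{\mp}$ from (\ref{Final-gen})) and its partner the inverse expansion (coefficients $\mathcal{G}_{n}$ from (\ref{5aa}) or $\widehat{\mathcal{G}}_{n}$ from (\ref{gen5bb})).

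I would carry out (\ref{R1}) in full and indicate the rest as parallel. Assume first $\func{Re}\mu>\func{Re}\nu>-1$, $\alpha>0$, and put $x=\tanh\alpha$. Inserting the first $\cosh$-identity into $I_{1}^{-}(\alpha,\mu,\nu)$, its left integrand is the $I_{1}^{-}$ integrand while $\sinh^{\nu}(t-\alpha)\cosh^{-\mu}t$ is the integrand of $I_{3}^{+}(\alpha,\nu,(\mu-\nu-2)/2)$ (since $a+2b+2=\mu$ for $a=\nu$, $b=(\mu-\nu-2)/2$). Using $\sqrt{1-\cosh^{-2}t}=\tanh t$, the leftover factor is the minus case of (\ref{Final-gen}) at $z=\operatorname{sech}t$, because then $\sqrt{1-z^{2}}=\tanh t$, $x^{-1}=\coth\alpha$ and $z/\sqrt{|x^{2}-1|}=\cosh\alpha/\cosh t$; hence it equals $2^{-\mu}\sinh^{\nu}\alpha\sum_{n}\mathfrak{N}_{n,\nu,\mu}^{-}(\tanh\alpha)\operatorname{sech}^{n}t$. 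Each $\operatorname{sech}^{n}t$ raises the $\cosh$-power of the $I_{3}^{+}$ integrand to $\mu+n$, i.e.\ produces $I_{3}^{+}(\alpha,\nu,(\mu-\nu-2+n)/2)$, so term-by-term integration together with (\ref{rep11}) and (\ref{rep55}) gives $I_{1}^{-}$ as the desired series. Cancelling $\Gamma(1+\nu)$ and $\sinh^{\nu}\alpha$, I would apply the Legendre duplication formula to $\Gamma(\tfrac{\mu-\nu+n}{2})/\Gamma(\mu-\nu)$; this is exactly what generates the $\sqrt{\pi}$, the $\Gamma(\tfrac{\mu-\nu+n+1}{2})$, the power $2^{-(3\mu-\nu+n)/2}$ and the Pochhammer $(\mu-\nu)_{n}$ of (\ref{R1}), and replacing $\cosh^{-k}\alpha$ by $(1-x^{2})^{k/2}$ finishes it. The inverse (\ref{G1}) comes the same way from the second $\cosh$-identity, whose factor $(1+e^{\alpha}e^{-t})^{\nu}(1-e^{-\alpha}e^{-t})^{-\nu}(1+e^{-2t})^{-\mu}$ is (\ref{5aa}) with $z=e^{-t}$, $w=e^{-\alpha}=\sqrt{(1-x)/(1+x)}$, $\tau=-\nu$, $\rho=\mu$, each $e^{-nt}$ now shifting the order of the $I_{1}^{-}$ integrand to $\mu+n$. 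The $\sinh$-pair (\ref{R2}), (\ref{G2}) is identical under $\cosh\leftrightarrow\sinh$, with $\operatorname{sech}t$ replaced by $\operatorname{csch}t$ (so that $\sqrt{1+\operatorname{csch}^{2}t}=\coth t$ supplies the $\sqrt{1+z^{2}}$ of the plus case of (\ref{Final-gen})) and with (\ref{gen5bb}) in place of (\ref{5aa}), now routed through $I_{1}^{+}$, $I_{3}^{-}$ and (\ref{rep33}), (\ref{rep66}).

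Two analytic points remain. Term-by-term integration I would justify by dominated convergence exactly as in the proof of Theorem 9: for $\func{Re}\nu>0$ the integrands vanish at $t=\alpha$, the generating series converges throughout $(\alpha,\infty)$, and the partial sums admit an integrable majorant. Having fixed each identity on its initial strip, I would then pass to all $\mu,\nu\in\mathbb{C}$ and to the stated $x$-ranges by analytic continuation, obtaining convergence and holomorphy in the parameters from the large-$n$ asymptotics: in a forward series the functions $P_{(\mu-\nu-2+n)/2}^{-(\mu+\nu+n)/2}(x)$ are of type $P_{\lambda}^{-\sigma-\lambda}$ with $\lambda\to\infty$, $\sigma=\nu+1=O(1)$, so (\ref{as21})--(\ref{as22}) govern them, while in an inverse series the order $\mu+n\to\infty$ at fixed degree $\nu$, so (\ref{as11})--(\ref{as12}) apply; combined with the known growth of $\mathfrak{N}$, $\mathcal{G}$, $\widehat{\mathcal{G}}$ and of $(\mu-\nu)_{n}$ these give the tails.

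The decisive difficulty is this convergence bookkeeping rather than any single computation, and it is what produces the hypotheses of the statement. The restriction $x\in(2^{-1/2},1)$ in the plus cases arises already in the expansion step: there the expansion variable is $\operatorname{csch}t$, whose endpoint value $\operatorname{csch}\alpha$ lies strictly inside the unit disc only when $\sinh\alpha>1$, i.e.\ $x>2^{-1/2}$; by contrast the minus cases use $\operatorname{sech}t$ with $\operatorname{sech}\alpha=\sqrt{1-x^{2}}<1$ automatically, which is why (\ref{R1}) holds on all of $(0,1)$. The threshold $\func{Re}\nu>-1$ for (\ref{G1}) and (\ref{G2}) is the exact convergence margin of the inverse series: the coefficients obey $\mathcal{G}_{n},\widehat{\mathcal{G}}_{n}\sim(-e^{-\alpha})^{-n}n^{-\nu-1}$ while $(\mu-\nu)_{n}P_{\nu}^{-\mu-n}\sim e^{-\alpha n}n^{-\nu-1}$, so the product leaves an alternating tail of order $(-1)^{n}n^{-2\func{Re}\nu-2}$, conditionally summable precisely for $\func{Re}\nu>-1$; this threshold is lifted entirely when $\nu-\mu\in\mathbb{N}_{0}$, since $(\mu-\nu)_{n}$ then truncates the series to a finite sum, and the same truncation extends (\ref{R2}) to all of $(0,1)$. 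The remaining delicacy is making the dominated-convergence argument airtight at the endpoint $t=\alpha$, where one generating-function ratio equals $1$, and matching it to the boundary clauses attached to (\ref{Final-gen}), (\ref{5aa}), (\ref{gen5bb}) (this is also what selects the boundary condition $\func{Re}\nu<2$ at $x=2^{-1/2}$ in (\ref{R2})); the gamma-duplication algebra and the substitutions $\tanh\alpha=x$, $e^{-\alpha}=\sqrt{(1-x)/(1+x)}$ are then routine.
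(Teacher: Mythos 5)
Your proposal follows exactly the route the paper intends: the paper itself offers no detailed proof of this theorem, only the four displayed identities together with the remark that the generating functions (\ref{Final-gen}), (\ref{5aa}), (\ref{gen5bb}) and analytic continuation lead to the result, and your pairing of identities with relations, the substitutions $z=\operatorname{sech}t$ or $\operatorname{csch}t$ and $z=e^{-t}$, $w=e^{-\alpha}$, the gamma-duplication bookkeeping, and the use of (\ref{as11})--(\ref{as12}) and (\ref{as21})--(\ref{as22}) for the tails all check out. The only soft spot is that the boundary condition $\func{Re}\nu <2$ at $x=2^{-1/2}$ is asserted rather than derived, but the paper does not derive it either.
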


Setting $\nu =2k+\mu +1$ and $\mu =\lambda -1/2$ into (\ref{G1}), and (\ref%
{G2}) yields

\begin{corollary}
If $\lambda $ $\in \mathbb{C}$, $k\in \mathbb{N}_{0}$, and $x\in \left(
0,1\right) $, then the connecting formulas%
\begin{eqnarray*}
\sum_{n=0}^{2k+1}\frac{\left( \lambda \right) _{n}\mathcal{G}_{n}\left(
-2k-\lambda -\frac{1}{2},\lambda -\frac{1}{2},\sqrt{\frac{1-x}{1+x}}\right) 
}{\left( 2\lambda +2k+1\right) _{n}\left( -2\right) ^{-n}\left(
1-x^{2}\right) ^{-\frac{n}{2}}x^{n}}C_{2k-n+1}^{\lambda +n}\left( \frac{1}{x}%
\right) &=&0\text{,} \\
\sum_{n=0}^{2k+1}\frac{\left( \lambda \right) _{n}\widehat{\mathcal{G}}%
_{n}\left( -2k-\lambda -\frac{1}{2},\lambda -\frac{1}{2},\sqrt{\frac{1-x}{1+x%
}}\right) }{\left( 2\lambda +2k+1\right) _{n}\left( -2\right) ^{-n}\left(
1-x^{2}\right) ^{-\frac{n}{2}}}C_{2k-n+1}^{\lambda +n}\left( x\right) &=&0%
\text{,}
\end{eqnarray*}%
are valid.
\end{corollary}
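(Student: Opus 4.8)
The plan is to specialize the two identities (\ref{G1}) and (\ref{G2}) of the preceding theorem at $\nu=2k+\mu+1$, $\mu=\lambda-\tfrac12$, and to read the two connecting formulas off the resulting relations. First I would record the two facts that make the specialization work. Since $\nu-\mu=2k+1\in\mathbb{N}_{0}$, the Pochhammer symbol $(\mu-\nu)_{n}=(-(2k+1))_{n}$ vanishes for $n>2k+1$, so both infinite series collapse to finite sums over $n=0,\dots,2k+1$; moreover the clause $\nu-\mu\in\mathbb{N}_{0}$ in the hypothesis of (\ref{G1}) is met, so (\ref{G1}) is available for every $\lambda\in\mathbb{C}$. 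The decisive fact is that $\frac{\mu-\nu+1}{2}=-k$, so the factor $1/\Gamma\!\left(\frac{\mu-\nu+1}{2}\right)=1/\Gamma(-k)$ appearing in the denominator of the left-hand sides of both (\ref{G1}) and (\ref{G2}) equals $0$. Because the Ferrers/associated Legendre functions in the numerators are entire in their parameters and hence finite, both left-hand sides vanish identically, forcing each right-hand sum to equal $0$; this is what turns the expansion formulas into homogeneous connecting formulas.

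Next I would convert the Legendre functions in the surviving sums into Gegenbauer polynomials. For $0\le n\le 2k+1$ one has $\nu-(\mu+n)=2k+1-n\in\mathbb{N}_{0}$, so (\ref{P-C}) applies with degree $2k+1-n$ and upper index $\mu+n+\tfrac12=\lambda+n$, expressing $P_{\nu}^{-\mu-n}(1/x)$ (resp.\ $P_{\nu}^{-\mu-n}(x)$) as a constant multiple of $|1-1/x^{2}|^{(\mu+n)/2}C_{2k-n+1}^{\lambda+n}(1/x)$ (resp.\ $(1-x^{2})^{(\mu+n)/2}C_{2k-n+1}^{\lambda+n}(x)$). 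Substituting this, I would then carry out the gamma-function bookkeeping: using $(-(2k+1))_{n}(2k+1-n)!=(-1)^{n}(2k+1)!$, $\Gamma(\lambda+n)=(\lambda)_{n}\Gamma(\lambda)$ and $\Gamma(2\lambda+2k+n+1)=(2\lambda+2k+1)_{n}\Gamma(2\lambda+2k+1)$, the $n$-independent factors (powers of $2$, $\Gamma(\lambda)/\Gamma(2\lambda+2k+1)$, $(2k+1)!$, $\sqrt\pi$, and the fixed powers of $x$ and $1-x^{2}$) pull out of the sum. Since this prefactor is generically nonzero, dividing by it leaves precisely the bracketed sums asserted in the corollary; here one uses $(-1)^{n}2^{n}=(-2)^{n}$ and, for $0<x<1$, the elementary identity $|1-1/x^{2}|^{n/2}=(1-x^{2})^{n/2}x^{-n}$ to match the coefficient $(\lambda)_{n}/\big[(2\lambda+2k+1)_{n}(-2)^{-n}(1-x^{2})^{-n/2}x^{n}\big]$ in the first formula and its $x$-argument analogue (without the $x^{n}$) in the second.

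Finally I would settle the range of $\lambda$. The first formula comes from (\ref{G1}), which holds for all $\lambda$ as noted, so nothing further is needed. The second comes from (\ref{G2}), whose hypothesis $\func{Re}\nu>-1$ only covers $\func{Re}\lambda>-2k-\tfrac32$; but each term of the finite sum is a meromorphic function of $\lambda$ (a polynomial in $\lambda$ times the rational factor $(\lambda)_{n}/(2\lambda+2k+1)_{n}$, with $\mathcal{G}_{n},\widehat{\mathcal{G}}_{n}$ polynomial in their first two arguments and $C_{2k-n+1}^{\lambda+n}$ polynomial in $\lambda$), so a relation vanishing on a half-plane extends to all $\lambda\in\mathbb{C}$ by analytic continuation.

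I expect the main obstacle to be the coefficient-matching bookkeeping, namely keeping the several gamma-function and power factors aligned so that the generic nonzero prefactor cancels cleanly and the surviving $n$-dependent coefficient is exactly the stated one. The genuinely conceptual step, by contrast, is the short observation that the specialization lands $\frac{\mu-\nu+1}{2}$ on the pole $-k$ of the Gamma function, collapsing the left-hand sides to $0$ and producing connecting formulas rather than expansions.
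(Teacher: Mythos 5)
Your proposal is correct and follows exactly the paper's route: the paper's entire proof is the remark that setting $\nu=2k+\mu+1$ and $\mu=\lambda-\tfrac12$ into (\ref{G1}) and (\ref{G2}) yields the corollary, which is precisely your specialization (the left sides vanish because $1/\Gamma\left(\frac{\mu-\nu+1}{2}\right)=1/\Gamma(-k)=0$, the sums truncate because $(-(2k+1))_{n}=0$ for $n>2k+1$, and (\ref{P-C}) converts the surviving Legendre functions into Gegenbauer polynomials). Your additional care with the hypothesis $\operatorname{Re}\nu>-1$ of (\ref{G2}) and the analytic continuation in $\lambda$ is a detail the paper leaves implicit but does not change the approach.
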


The above formulas are rather unexpected because they are very similar to
formulas (\ref{GG1}), and (\ref{GG2}) but different.

Setting $\nu =k\pm m$ and $\mu =-k\pm m$, leads to two new mutually inverse
relations for associated Legendre polynomials.

\begin{corollary}
If $m,k\in \mathbb{N}_{0}$, $m\leq k$, and $x\in \left( -1,1\right) $, then 
\begin{eqnarray*}
\frac{P_{k}^{\mp m}\left( x\right) }{2^{\pm m}} &=&\frac{k!\left( 1-x\right)
^{\frac{k}{2}}}{\left( 1+x\right) ^{\frac{k}{2}\pm m}}\sum_{n=m\mp m}^{2k}%
\frac{\mathcal{G}_{n}\left( -k\mp m,-k\pm m,\sqrt{\frac{1-x}{1+x}}\right) }{%
\left( -1\right) ^{n+k}\left( 2k-n\right) !}\frac{P_{k\pm m}^{k\mp
m-n}\left( \frac{1}{x}\right) }{x^{^{-k\mp m}}}\text{,} \\
\frac{P_{k}^{\mp m}\left( \frac{1}{x}\right) }{2^{\pm m}x^{^{-k}}} &=&\frac{%
k!\left( 1-x\right) ^{\frac{k}{2}}}{\left( 1+x\right) ^{\frac{k}{2}\pm m}}%
\sum_{n=m\mp m}^{2k}\frac{\widehat{\mathcal{G}}_{n}\left( -k\mp m,-k\pm m,%
\sqrt{\frac{1-x}{1+x}}\right) }{\left( -1\right) ^{n+k}\left( 2k-n\right) !}%
P_{k\pm m}^{k\mp m-n}\left( x\right) \text{.}
\end{eqnarray*}
\end{corollary}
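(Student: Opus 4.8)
The plan is to specialize the two identities (\ref{G1}) and (\ref{G2}) of the preceding theorem to the parameter values $\nu=k\pm m$, $\mu=-k\pm m$, with the two signs taken consistently, and then to recognize the resulting Ferrers functions of integer parameters as associated Legendre polynomials via (\ref{Pn}). First I would record the three arithmetic facts that drive everything. With this choice $\mu-\nu=-2k$, so the Pochhammer factor $(\mu-\nu)_{n}=(-2k)_{n}$ vanishes for $n>2k$ and each infinite series collapses to a finite sum running up to $n=2k$; moreover the validity hypothesis of the theorem holds because $\nu-\mu=2k\in\mathbb{N}_{0}$. Next, $\mu+\nu=\pm 2m$, so the order of the Ferrers function on the left of (\ref{G1}) and (\ref{G2}) is $-\tfrac{\mu+\nu}{2}=\mp m$. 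Finally $\tfrac{\mu-\nu-2}{2}=-k-1$, so that left-hand function has degree $-k-1$; applying the reflection $P_{\nu}^{-\mu}=P_{-1-\nu}^{-\mu}$ from (\ref{1}) replaces $P_{-k-1}^{\mp m}$ by $P_{k}^{\mp m}$, the associated Legendre polynomial appearing on the left of the corollary.

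The core of the computation is the reconciliation of the prefactors. After substitution the algebraic factor on the left of (\ref{G1}) carries $2^{(3\mu-\nu)/2}=2^{\pm m-2k}$, $x^{\nu}=x^{k\pm m}$, $(1-x^{2})^{(\mu+\nu)/4}=(1-x^{2})^{\pm m/2}$ and $\Gamma(\tfrac{\mu-\nu+1}{2})=\Gamma(\tfrac12-k)$, while each summand on the right carries $(\tfrac{1-x}{1+x})^{\nu/2}=(\tfrac{1-x}{1+x})^{(k\pm m)/2}$. I would combine the powers $(1-x^{2})^{\mp m/2}(\tfrac{1-x}{1+x})^{(k\pm m)/2}$ into the single factor $(1-x)^{k/2}(1+x)^{-k/2\mp m}$ displayed in the statement, keeping $x^{k\pm m}$ attached to the transplanted Legendre polynomial $P_{k\pm m}^{k\mp m-n}(1/x)$. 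The remaining constants collapse by the two elementary gamma evaluations $2^{-2k}\Gamma(\tfrac12-k)/\sqrt{\pi}=(-1)^{k}k!/(2k)!$ and $(-2k)_{n}=(-1)^{n}(2k)!/(2k-n)!$, whose product is exactly $(-1)^{n+k}k!/(2k-n)!$, the coefficient in the corollary; the factor $2^{\pm m}$ cancels on both sides independently of the sign. The inverse relation follows identically from (\ref{G2}), the only changes being $\mathcal{G}_{n}\mapsto\widehat{\mathcal{G}}_{n}$ and the interchange of $x$ and $1/x$ forced by the left-hand power $x^{(\mu-\nu)/2}=x^{-k}$.

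Finally I would fix the range of summation and of $x$. In the first relation the polynomials on the right are $P_{k\pm m}^{k\mp m-n}(1/x)$, of order $k\mp m-n$ and degree $k\pm m$. For the lower sign this order exceeds the degree precisely when $n<2m$, so those terms vanish by (\ref{Pn}) and the sum may be started at $n=2m$; for the upper sign no term vanishes and the natural starting index is $0$. In both cases this is the stated lower limit $m\mp m$. Since after these reductions both sides are finite combinations of the elementary functions in (\ref{Pn}), the identities, first obtained for $0<x<1$ from the preceding theorem, extend to all $x\in(-1,1)$ by analytic continuation.

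The main obstacle is purely organizational: carrying the two sign choices through the prefactor bookkeeping in parallel and verifying that the gamma quotient simplifies to the compact coefficient $(-1)^{n+k}k!/(2k-n)!$ without sign or power-of-two errors, together with checking the correct vanishing of the over-order Legendre polynomials for the lower sign. Once the two gamma evaluations above are in hand, no genuine difficulty remains.
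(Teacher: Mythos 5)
Your proposal is correct and follows exactly the route the paper intends (the paper itself only says "Setting $\nu=k\pm m$ and $\mu=-k\pm m$ leads to..."): substitute into (\ref{G1}) and (\ref{G2}), use $P_{-k-1}^{\mp m}=P_{k}^{\mp m}$, the evaluations $\Gamma(\tfrac12-k)=(-4)^{k}k!\sqrt{\pi}/(2k)!$ and $(-2k)_{n}=(-1)^{n}(2k)!/(2k-n)!$, and the vanishing of over-order Legendre polynomials to get the lower limit $m\mp m$. All the bookkeeping you describe checks out, so nothing further is needed.
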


\section{Series relating $P_{\protect\nu }^{-\protect\mu }\left( 2x/\left(
1+x^{2}\right) \right) $ with $P_{\protect\lambda }^{-\protect\sigma }\left(
1/x\right) $ and $P_{\protect\nu }^{-\protect\mu }\left( x\right) $ with $P_{%
\protect\lambda }^{-\protect\sigma }\left( \left( 1+x^{2}\right) /2x\right) $%
}

A number of mutually inverse series relating $P_{\nu }^{-\mu }\left( \frac{2x%
}{1+x^{2}}\right) $ with $P_{\lambda }^{-\sigma }\left( \frac{1}{x}\right) $
and $P_{\nu }^{-\mu }\left( x\right) $ with $P_{\lambda }^{-\sigma }\left( 
\frac{1+x^{2}}{2x}\right) $ can be established in the same way as the
results of section 4 by noting that $2x/\left( 1+x^{2}\right) =$ $\tanh
2\alpha $ as $x=\tanh \alpha $ and making use of the following integral
representations obtained from the integral representations of section 2 by
simple changes of variables:%
\begin{eqnarray*}
I_{1}^{+}\left( 2\alpha ,\mu ,\nu \right) &=&2\int_{\alpha }^{\infty }\frac{%
e^{-2\mu t}dt}{\left( \sinh 2t-\sinh 2\alpha \right) ^{-\nu }}\text{,} \\
\text{ \ }I_{1}^{-}\left( 2\alpha ,\mu ,\nu \right) &=&2\int_{\alpha
}^{\infty }\frac{e^{-2\mu t}dt}{\left( \cosh 2t-\cosh 2\alpha \right) ^{-\nu
}}\text{,} \\
I_{2}^{+}\left( 2\alpha ,\mu ,\nu \right) &=&2\int_{\alpha }^{\infty }\frac{%
e^{-4\mu t}\cosh ^{\nu }2tdt}{\sinh ^{-\nu }\left( 2t-2\alpha \right) }\text{%
, } \\
\text{\ }I_{2}^{-}\left( 2\alpha ,\mu ,\nu \right) &=&2\int_{\alpha
}^{\infty }\frac{e^{-4\mu t}\sinh ^{-\nu }2tdt}{\sinh ^{-\nu }\left(
2t-2\alpha \right) }\text{, } \\
I_{3}^{+}\left( \alpha ,\sigma ,\nu \right) &=&2\int_{\alpha }^{\infty }%
\frac{\sinh ^{\sigma }\left( 2t-2\alpha \right) }{\cosh ^{\sigma +2\nu +2}2t}%
dt\text{,} \\
I_{3}^{-}\left( \alpha ,\sigma ,\nu \right) &=&2\int_{\alpha }^{\infty }%
\frac{\sinh ^{\sigma }\left( 2t-2\alpha \right) }{\sinh ^{\sigma +2\nu +2}2t}%
dt\text{.}
\end{eqnarray*}%
Necessary generating functions are either\ special cases of (\ref{Laur1})
expressed in terms of Gauss hypergeometric polynomials (those that are
considered above and some additional) or new generating functions that are
similar to the second family of generating functions studied in Section 3.
Also, note that the change $2x/\left( 1+x^{2}\right) =\sin \theta $, $%
0<\theta <\frac{\pi }{2}$, transforms relations between $P_{\nu }^{-\mu
}\left( x\right) $ and $P_{\lambda }^{-\sigma }\left( \frac{1+x^{2}}{2x}%
\right) $ into relations connecting $P_{\nu }^{-\mu }\left( \tan \frac{%
\theta }{2}\right) $ and $P_{\lambda }^{-\sigma }\left( \frac{1}{\sin \theta 
}\right) $ while relations between $P_{\nu }^{-\mu }\left( \frac{2x}{1+x^{2}}%
\right) $ and $P_{\lambda }^{-\sigma }\left( \frac{1}{x}\right) $ turn into
relations connecting $P_{\nu }^{-\mu }\left( \sin \theta \right) $ and $%
P_{\lambda }^{-\sigma }\left( \cot \frac{\theta }{2}\right) $.

As example, we consider mutually inverse series arising from connections
between $I_{1}^{+}\left( \alpha ,\mu ,\nu \right) $ and $I_{1}^{-}\left(
2\alpha ,\frac{\mu +\nu }{2},\nu \right) $.

\begin{theorem}
\ Let $\mu $,$\nu \in \mathbb{C}$. Then for $x\in \left( 0,1\right) $,%
\begin{equation}
P_{\nu }^{-\mu }\left( x\right) =\frac{\sqrt{\pi }}{2^{\mu -2\nu }}%
\sum_{n=0}^{\infty }\frac{\left( 2\nu \right) _{n}\left( \mu -\nu \right)
_{n}C_{n}^{1/2-\nu -n}\left( x\right) P_{\nu }^{-\frac{n+\mu +\nu }{2}%
}\left( \frac{1+x^{2}}{2x}\right) }{2^{2n}\left( \frac{1}{2}+\nu \right)
_{n}\Gamma \left( \frac{\mu -\nu +n+1}{2}\right) x^{-\nu }\left(
1-x^{2}\right) ^{\frac{n+\nu }{2}}}\text{,}  \label{con1}
\end{equation}%
\begin{equation}
\frac{\sqrt{\pi }P_{\nu }^{-\frac{\mu +\nu }{2}}\left( \frac{1+x^{2}}{2x}%
\right) }{2^{\mu -2\nu }x^{-\nu }\Gamma \left( \frac{\mu -\nu +1}{2}\right) }%
=\sum_{n=0}^{\infty }\frac{\left( -2\nu \right) _{n}\left( \mu -\nu \right)
_{n}}{2^{n}\left( \frac{1}{2}-\nu \right) _{n}}\frac{C_{n}^{1/2+\nu
-n}\left( x\right) }{\left( 1-x^{2}\right) ^{\frac{n-\nu }{2}}}P_{\nu
}^{-\mu -n}\left( x\right) \text{.}  \label{con2}
\end{equation}
\end{theorem}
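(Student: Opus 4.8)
The plan is to realize both series as term-by-term expansions of one Legendre integral in terms of another, exactly in the spirit of the proof of Theorem \ref{theor-a}. The geometric engine is the factorization
\[
\cosh 2t-\cosh 2\alpha = 2\left(\sinh t-\sinh\alpha\right)\left(\sinh t+\sinh\alpha\right),
\]
which allows passage between the kernel of $I_{1}^{+}\left(\alpha,\mu,\nu\right)$ and that of $I_{1}^{-}\left(2\alpha,\cdot,\nu\right)$, both written, via the changes of variable of Section 5, as integrals in the single variable $t$ over $\left[\alpha,\infty\right)$. I would work first under $\func{Re}\mu>\func{Re}\nu>-1$, where all integrals converge, and fix $\alpha>0$ with $x=\tanh\alpha$, so that $\coth 2\alpha=\left(1+x^{2}\right)/\left(2x\right)$, $\sinh\alpha=x/\sqrt{1-x^{2}}$, $\cosh\alpha=1/\sqrt{1-x^{2}}$.

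For (\ref{con1}) I would start from $I_{1}^{+}\left(\alpha,\mu,\nu\right)=\int_{\alpha}^{\infty}e^{-\mu t}\left(\sinh t-\sinh\alpha\right)^{\nu}dt$, substitute the factorization to write $\left(\sinh t-\sinh\alpha\right)^{\nu}=2^{-\nu}\left(\cosh 2t-\cosh 2\alpha\right)^{\nu}\left(\sinh t+\sinh\alpha\right)^{-\nu}$, and expand $\left(\sinh t+\sinh\alpha\right)^{-\nu}$ by the generating function (\ref{sh-C}) with $\tau=\nu$ and $u=t$, whose coefficients are precisely the $\mathfrak{C}_{n}\left(\alpha,\nu\right)$ of (\ref{CC}) carrying the polynomials $C_{n}^{1/2-\nu-n}\left(\tanh\alpha\right)$. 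Integrating term by term produces $2^{-\nu-1}\sum_{n}\mathfrak{C}_{n}\left(\alpha,\nu\right)I_{1}^{-}\left(2\alpha,\left(\mu+\nu+n\right)/2,\nu\right)$; evaluating the two sides by (\ref{rep33}) and (\ref{rep11}), inserting (\ref{CC}), and performing the change $x=\tanh\alpha$ yields (\ref{con1}) up to a gamma quotient that collapses to the stated normalization by the Legendre duplication formula applied to $\Gamma\left(\left(\mu-\nu+n\right)/2\right)$ in the form $\Gamma\left(\frac{\mu-\nu+n}{2}\right)\Gamma\left(\frac{\mu-\nu+n+1}{2}\right)=2^{1-\mu+\nu-n}\sqrt{\pi}\,\Gamma\left(\mu-\nu+n\right)$.

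For the inverse series (\ref{con2}) I would reverse the roles. Writing $I_{1}^{-}\left(2\alpha,\left(\mu+\nu\right)/2,\nu\right)=2^{\nu+1}\int_{\alpha}^{\infty}e^{-\left(\mu+\nu\right)t}\left(\sinh t-\sinh\alpha\right)^{\nu}\left(\sinh t+\sinh\alpha\right)^{\nu}dt$, I would now expand the factor $\left(\sinh t+\sinh\alpha\right)^{\nu}$ by (\ref{sh-C}) with $\tau=-\nu$, producing the coefficients $\mathfrak{C}_{n}\left(\alpha,-\nu\right)$ and the polynomials $C_{n}^{1/2+\nu-n}\left(\tanh\alpha\right)$. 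Each term is then $I_{1}^{+}\left(\alpha,\mu+n,\nu\right)$, and after applying (\ref{rep33})/(\ref{rep11}), inserting (\ref{CC}), and the same substitution $x=\tanh\alpha$, the duplication formula (this time with $z=\left(\mu-\nu\right)/2$) delivers (\ref{con2}).

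Two points need care, and together they form the main obstacle. First, the interchange of summation and integration: the series (\ref{sh-C}) sits exactly on the boundary $\left|z\left(1-s\right)\right|=1$ of its disk of convergence at the lower endpoint $t=\alpha$ (where the factor $\left(\sinh t-\sinh\alpha\right)^{\nu}$ anyway vanishes, or is integrable, for $\func{Re}\nu>-1$), while for $t>\alpha$ it converges absolutely. As in the proof of Theorem \ref{theor-a}, uniform boundedness of the terms together with $e^{-\mu t}\left(\cosh 2t-\cosh 2\alpha\right)^{\nu}\in L\left(\alpha,\infty\right)$ for $\func{Re}\mu>\func{Re}\nu$ lets the dominated convergence theorem justify the interchange; this forces the initial restriction $\func{Re}\mu>\func{Re}\nu>-1$. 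Second, the passage to all $\mu,\nu\in\mathbb{C}$: the asymptotics (\ref{as11}) and (\ref{as12}) for $P_{\nu}^{-\mu-n}$ together with the large-order behavior of $\mathfrak{C}_{n}$ from Theorem \ref{C} force the general terms to zero geometrically, so both series converge and define functions analytic in $\mu$ and $\nu$; since the two sides agree on the nonempty open set $\func{Re}\mu>\func{Re}\nu>-1$, analytic continuation extends the identities to all $\mu,\nu\in\mathbb{C}$, while the finite-sum degenerations require no convergence hypothesis at all.
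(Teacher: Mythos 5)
Your outline follows the paper's own route almost exactly: the same factorization $\cosh 2t-\cosh 2\alpha=2\left(\sinh t-\sinh \alpha\right)\left(\sinh t+\sinh \alpha\right)$, the same expansion by the generating function (\ref{sh-C}), term-by-term integration converting $I_{1}^{+}\left(\alpha,\mu,\nu\right)$ into $2^{-\nu-1}\sum_{n}\mathfrak{C}_{n}\left(\alpha,\nu\right)I_{1}^{-}\left(2\alpha,\left(\mu+\nu+n\right)/2,\nu\right)$, the duplication formula to produce the stated normalization, and a final analytic continuation; your bookkeeping of the constants is consistent with (\ref{con1}) and (\ref{con2}), and the reversed expansion with $\tau=-\nu$ for the inverse series is the right move.

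The one genuine gap is the justification of the interchange of sum and integral. You correctly observe that at $t=\alpha$ the series (\ref{sh-C}) sits on the boundary of its region of convergence, but you then assert ``uniform boundedness of the terms'' as if it followed, as in the proof of (\ref{theor-a}), from absolute and uniform convergence of a power series. That reasoning is unavailable here precisely because of the boundary point: in the proof of (\ref{theor-a}) the argument of the generating function stays strictly inside the unit disk for all $t\geq\alpha$, whereas here the quantity to be dominated at $t=\alpha$ is exactly $e^{-\alpha n}\mathfrak{C}_{n}\left(\alpha,\nu\right)$, and its uniform boundedness in $n$ and $\alpha$ is the nontrivial estimate (\ref{Geg-est}), proved in the paper via a Volterra integral equation and established only for $\nu$ in the restricted domain $D_{\nu}\left(r\right)=\{\left\vert\nu-1\right\vert\leq r\}\cap\{\operatorname{Re}\nu\leq 1\}$, $1<r<\sqrt{2}$ (and, for the inverse series, for $-\nu$ in that domain). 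Consequently the initial parameter region cannot be taken to be $\operatorname{Re}\mu>\operatorname{Re}\nu>-1$ as you propose; one must start from $\operatorname{Re}\mu>\operatorname{Re}\nu$ with $\nu\in D_{\nu}\left(r\right)$, and the concluding analytic continuation must then be carried out in $\nu$ as well as in $\mu$, using (\ref{as11}) together with the uniform asymptotics (\ref{C}) to show that $\mathfrak{C}_{n}\left(\alpha,\nu\right)P_{\nu}^{-\left(n+\mu+\nu\right)/2}\left(\coth 2\alpha\right)=O\left(n^{-2}\right)$, so that both sides of (\ref{con1}) are analytic in $\left(\mu,\nu\right)$. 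Without invoking (\ref{Geg-est}) and adjusting the starting domain accordingly, the dominated-convergence step does not go through.
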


\begin{proof}
Our proof is based on Theorem 4. Let $\nu \in $ $D_{\nu }\left( r\right) $ $%
=\{\left\vert \nu -1\right\vert \leq r\}\cap \{\func{Re}\nu \leq 1\}$, $1<r<%
\sqrt{2}$, $\func{Re}\mu >\func{Re}\nu $ and $\alpha >0$. In this case, by
employing the identity%
\begin{equation*}
\left( \sinh u-\sinh \alpha \right) ^{\nu }=\frac{\left( \cosh 2u-\cosh
2\alpha \right) ^{\nu }}{2^{\nu }\left( \sinh u+\sinh \alpha \right) ^{\nu }}
\end{equation*}%
and the generating function (\ref{sh-C}), we can write 
\begin{equation}
I_{1}^{+}\left( \alpha ,\mu ,\nu \right) =\int_{\alpha }^{\infty }\frac{%
2^{-\nu }e^{-u\mu }}{\left( \cosh 2u-\cosh 2\alpha \right) ^{-\nu }}\left(
\sum_{n=0}^{\infty }\mathfrak{C}_{n}\left( \alpha ,\nu \right) e^{-\left(
n+\nu \right) u}\right) du\text{.}  \label{pr1}
\end{equation}%
Taking into account (\ref{Geg-est}), we obtain as $u\geq \alpha $, 
\begin{equation*}
\left\vert \frac{e^{-u\left( \mu +\nu \right) }e^{-\left( u-\alpha \right)
n}e^{-\alpha n}\mathfrak{C}_{n}\left( \alpha ,\nu \right) }{\left( \cosh
2u-\cosh 2\alpha \right) ^{-\nu }}\right\vert \leq \frac{\mathcal{K}_{\nu
}e^{-u\func{Re}\left( \mu +\nu \right) }}{\left( \cosh 2u-\cosh 2\alpha
\right) ^{-\func{Re}\nu }}\text{,}
\end{equation*}%
where the function in the right side belongs to $L^{1}\left( \alpha ,\infty
\right) $. Then, according to the dominated convergence theorem for series,
one can integrate (\ref{pr1}) term-by-term. It leads to the relation%
\begin{equation*}
I_{1}^{+}\left( \alpha ,\mu ,\nu \right) =\sum_{n=0}^{\infty }\frac{%
\mathfrak{C}_{n}\left( \alpha ,\nu \right) }{2^{1+\nu }}I_{1}^{-}\left(
2\alpha ,\frac{\mu +\nu +n}{2},\nu \right) \text{, }\func{Re}\mu >\func{Re}%
\nu \text{, }\nu \in D_{\nu }\left( r\right) \text{,}
\end{equation*}%
which on employing (\ref{rep1}) and (\ref{rep2}), using the multiplication
formula for gamma functions and making the change $\tanh \alpha =x$ turns
into (\ref{con1}). Note that according to (\ref{as11}) and (\ref{C}), for
all $\nu ,\mu \in \mathbb{C}$, 
\begin{equation*}
\mathfrak{C}_{n}\left( \alpha ,\nu \right) P_{\nu }^{-\frac{n+\mu +\nu }{2}%
}\left( \coth 2\alpha \right) =O\left( n^{-2}\right) \text{.}
\end{equation*}%
Therefore, both sides of (\ref{con1}) are analytic functions of parameters $%
\mu ,\nu \in \mathbb{C}$, and (\ref{con1}) is valid by virtue of the
analytic continuation. The inverse series (\ref{con2}) is established in the
same manner.
\end{proof}

As $\mu -\nu \in \mathbb{N}_{0}$, series in (\ref{con1}) and (\ref{con2})
turns into the mutually inverse sums for Gegenbauer polynomials.

\begin{corollary}
For $\mu \in \mathbb{C}$,%
\begin{equation}
C_{k}^{\mu +1/2}\left( x\right) =\sum_{m=0}^{\left[ \frac{k}{2}\right] }%
\frac{\Lambda _{1}\left( k,m,\mu \right) }{x^{-m}}C_{k-2m}^{\frac{1}{2}%
-2k+2m-\mu }\left( x\right) C_{m}^{k-m+\mu +\frac{1}{2}}\left( \frac{1+x^{2}%
}{2x}\right) \text{,}  \label{Fin11}
\end{equation}%
\begin{equation}
x^{l}C_{l}^{\mu +l+1/2}\left( \frac{1+x^{2}}{2x}\right)
=\sum_{n=0}^{2l}\Lambda _{2}\left( l,n,\mu \right) C_{n}^{\frac{1}{2}+2l+\mu
-n}\left( x\right) C_{2l-n}^{\mu +n+\frac{1}{2}}\left( x\right) \text{,}
\label{Fin22}
\end{equation}%
where%
\begin{equation}
\Lambda _{1}\left( k,m,\mu \right) =\frac{\left( -1\right)
^{k+m}2^{2m}\left( 2k+2\mu \right) _{k-2m}\left( \mu +\frac{1}{2}\right)
_{k-m}}{\left( \frac{1}{2}+\mu +k\right) _{k-2m}\left( 2\mu +k+1\right)
_{k-m}}\text{,}  \notag
\end{equation}%
\begin{equation*}
\Lambda _{2}\left( l,n,\mu \right) =\frac{\left( -1\right) ^{n+l}\left(
-4l-2\mu \right) _{n}\left( \mu +l+\frac{1}{2}\right) _{n-l}}{2^{2l}\left( 
\frac{1}{2}-2l-\mu \right) _{n}\left( 2\mu +2l+1\right) _{n-l}}.
\end{equation*}
\end{corollary}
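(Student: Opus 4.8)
The plan is to obtain both sums as specializations of (\ref{con1}) and (\ref{con2}) chosen so that the Ferrers function on the left collapses to a single Gegenbauer polynomial via (\ref{P-C}); in each case this forces $\nu-\mu$ to be a non-negative integer, and then the factor $(\mu-\nu)_n$ truncates the series to a finite sum.

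First I would derive (\ref{Fin11}) from (\ref{con1}) by setting $\nu=k+\mu$, $k\in\mathbb{N}_0$. Then $(\mu-\nu)_n=(-k)_n$ cuts the series off at $n=k$, and by (\ref{P-C}) the left side $P_{k+\mu}^{-\mu}(x)$ is a constant multiple of $(1-x^2)^{\mu/2}C_k^{\mu+1/2}(x)$. The crucial observation is that the factor $1/\Gamma(\tfrac{\mu-\nu+n+1}{2})=1/\Gamma(\tfrac{n-k+1}{2})$ in (\ref{con1}) vanishes whenever $n\not\equiv k\pmod 2$, since then $\tfrac{n-k+1}{2}$ is a non-positive integer at which the gamma function has a pole. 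Hence only the terms with $n\equiv k\pmod 2$ survive, and these are precisely the terms for which $P_{\nu}^{-(n+\mu+\nu)/2}(\tfrac{1+x^2}{2x})$ reduces, again by (\ref{P-C}), to a Gegenbauer polynomial. Writing $n=k-2m$ with $m=0,\dots,[k/2]$ gives order $k-m+\mu$ and degree $m$, producing $C_m^{k-m+\mu+1/2}(\tfrac{1+x^2}{2x})$, while $C_n^{1/2-\nu-n}(x)=C_{k-2m}^{1/2-2k+2m-\mu}(x)$ is already the inner factor in (\ref{Fin11}).

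To finish (\ref{Fin11}) I would track the elementary factors: from $1-(\tfrac{1+x^2}{2x})^2=-\tfrac{(1-x^2)^2}{4x^2}$, the powers of $(1-x^2)$ and of $x$ produced by (\ref{P-C}) combine with the explicit $x^{\nu}(1-x^2)^{-(n+\nu)/2}$ in (\ref{con1}) to leave exactly $(1-x^2)^{\mu/2}$ and $x^m$; the former cancels the $(1-x^2)^{\mu/2}$ coming from the left side, leaving $C_k^{\mu+1/2}(x)$, and the latter is the $x^m$ in (\ref{Fin11}). Collecting the surviving gamma and Pochhammer factors, with the Legendre duplication formula and the identity (\ref{ak}), yields $\Lambda_1(k,m,\mu)$. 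The companion identity (\ref{Fin22}) follows from (\ref{con2}) on setting $\nu=\mu+2l$, $l\in\mathbb{N}_0$, which makes the left side $P_{\mu+2l}^{-(\mu+l)}(\tfrac{1+x^2}{2x})$ reduce through (\ref{P-C}) to $C_l^{\mu+l+1/2}(\tfrac{1+x^2}{2x})$. Here $(\mu-\nu)_n=(-2l)_n$ truncates the series at $n=2l$, the factor $\Gamma(\tfrac{\mu-\nu+1}{2})=\Gamma(\tfrac{1}{2}-l)$ stays finite, and every term with $0\le n\le 2l$ already has integer degree, so no parity thinning occurs; applying (\ref{P-C}) to $P_{\mu+2l}^{-\mu-n}(x)$ gives $C_{2l-n}^{\mu+n+1/2}(x)$ and $C_n^{1/2+\nu-n}(x)=C_n^{1/2+2l+\mu-n}(x)$, the common factor $(1-x^2)^{\mu+l}$ cancels between the two sides, and the remaining constants assemble into $\Lambda_2(l,n,\mu)$.

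I expect the main obstacle to be twofold: first, recognizing the parity-selection mechanism in (\ref{Fin11}), namely that the wrong-parity terms are annihilated by the pole of the reciprocal gamma factor rather than by the truncating Pochhammer symbol; and second, the routine but lengthy gamma bookkeeping that reduces the accumulated ratios of gamma functions and Pochhammer symbols to the compact closed forms $\Lambda_1$ and $\Lambda_2$, where the Legendre duplication formula and (\ref{ak}) do the real work.
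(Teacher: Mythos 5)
Your proposal is correct and follows essentially the same route as the paper: specialize (\ref{con1}) at $\nu=k+\mu$ and (\ref{con2}) at $\nu=\mu+2l$, let the reciprocal gamma factor $1/\Gamma(\frac{n-k+1}{2})$ kill the wrong-parity terms in the first case, reduce the surviving Ferrers/Legendre factors via (\ref{P-C}) with the substitution $n=k-2m$, and assemble the constants into $\Lambda_1$ and $\Lambda_2$. The only detail the paper adds that you omit is the closing remark that both identities extend to all of $\mathbb{R}$ by holomorphic continuation.
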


\begin{proof}
As $\mu -\nu =-k$, $k\in \mathbb{N}_{0}$, (\ref{con1}) becomes%
\begin{equation*}
P_{k+\mu }^{-\mu }\left( x\right) =\frac{\sqrt{\pi }k!}{2^{-\mu -2k}}%
\sum_{n=0}^{k}\frac{\left( -1\right) ^{n}\left( 2k+2\mu \right) _{n}C_{n}^{%
\frac{1}{2}-k-\mu -n}\left( x\right) x^{k+\mu }P_{k+\mu }^{-\frac{n+k}{2}%
-\mu }\left( \frac{1+x^{2}}{2x}\right) }{2^{2n}\left( \frac{1}{2}+\mu
+k\right) _{n}\Gamma \left( \frac{-k+n+1}{2}\right) \left( k-n\right)
!\left( 1-x^{2}\right) ^{\frac{n+k+\mu }{2}}}\text{,}
\end{equation*}%
which for even $k=2l$ turns as $n=2r$ into%
\begin{equation*}
\frac{P_{2l+\mu }^{-\mu }\left( x\right) }{2^{4l}\left( 2l\right) !}%
=\sum_{r=0}^{l}\frac{\sqrt{\pi }\left( 4l+2\mu \right) _{2r}C_{2r}^{\frac{1}{%
2}-\mu -2l-2r}\left( x\right) x^{2l+\mu }P_{l-r+l+r+\mu }^{-l-r-\mu }\left( 
\frac{1+x^{2}}{2x}\right) }{2^{2r-\mu }\left( \frac{1}{2}+\mu +2l\right)
_{2r}\left( l-r\right) !\left( 1-x^{2}\right) ^{r+l+\frac{\mu }{2}}}\text{,}
\end{equation*}%
and for odd $k=2l+1$ turns as $n=2r+1$ into%
\begin{equation*}
\frac{P_{2l+\mu +1}^{-\mu }\left( x\right) }{2^{4l}\left( 2l+1\right) !}%
=-\sum_{r=0}^{l}\frac{\sqrt{\pi }\left( 4l+2+2\mu \right)
_{2r+1}C_{2r+1}^{-2l-2r-\mu -\frac{3}{2}}\left( x\right) P_{l-r+l+r+1+\mu
}^{-l-r-1-\mu }\left( \frac{1+x^{2}}{2x}\right) }{2^{2r-\mu }\left( \frac{3}{%
2}+\mu +2l\right) _{2r+1}\left( l-r\right) !x^{-2l-1-\mu }\left(
1-x^{2}\right) ^{r+l+1+\frac{\mu }{2}}}\text{.}
\end{equation*}%
On making the change $l-r=m$ and making use of (\ref{P-C}), we obtain (\ref%
{Fin11}) by simple manipulations. The inverse sum (\ref{Fin22}) is derived
from (\ref{con2}) as $\mu -\nu =-2l$. Both relations are valid on $\mathbb{R}
$ by virtue of holomorphic continuation.
\end{proof}

Another curious relation follows from (\ref{con2}) as $\mu -\nu =-2l-1$,%
\begin{equation*}
\sum_{n=0}^{2l+1}\Lambda _{3}\left( l,n,\mu \right) C_{n}^{3/2+2l+\mu
-n}\left( x\right) C_{2l-n+1}^{\mu +n+1/2}\left( x\right) =0\text{,}
\end{equation*}%
where%
\begin{equation*}
\Lambda _{3}\left( l,n,\mu \right) =\frac{\left( -1\right) ^{n}\Gamma \left(
n-4l-2\mu \right) \Gamma \left( n+\mu +1/2\right) }{\Gamma \left( n-\frac{1}{%
2}-2l-\mu \right) \Gamma \left( n+2\mu +2l+2\right) }\text{.}
\end{equation*}

In conclusion, we note that the finite sums relations for Gegenbauer and
associated Legendre polynomials of a real variable obtained in this article
can be readily extended on certain domains in the complex plane due to
analytic continuation. For example, one can see that mutually inverse sums (%
\ref{Fin11}) and (\ref{Fin22}) are valid on the whole complex plane. For the
infinite series relations such analytic continuations, which include
extensions of Ferrers functions into the complex plane with the cuts $%
(-\infty ,-1]$ and $[1,\infty )$ by means of (\ref{Hyper1}), can be made by
employing asymptotics of the Gauss hypergeometric functions and polynomials
introduced in of section 3 to study series convergence.

\end{document}